\documentclass[11pt,letterpaper]{amsart}

\usepackage[T1]{fontenc}
\usepackage[utf8]{inputenc}

\usepackage{amsmath,amssymb,amsfonts,amsthm}
\usepackage{mathtools}
\usepackage{bm}

\usepackage{graphicx}
\usepackage{float}
\usepackage{pgfplots}
\pgfplotsset{compat=1.18}

\usepackage{booktabs}
\usepackage{multirow}
\usepackage{array}
\usepackage{siunitx}

\usepackage{textcomp}
\usepackage{cancel}
\usepackage[normalem]{ulem}
\usepackage{listings}
\usepackage{caption}

\usepackage[hidelinks]{hyperref}

\usepackage[margin=1in]{geometry}

\newcommand{\E}{\mathbb{E}}
\DeclareMathOperator{\Cov}{Cov}
\DeclareMathOperator{\Var}{Var}

\renewcommand{\P}{\mathbb{P}}
\numberwithin{equation}{section}

\theoremstyle{plain}
\newtheorem{theorem}{Theorem}[section]
\newtheorem{lemma}[theorem]{Lemma}
\newtheorem{proposition}[theorem]{Proposition}
\newtheorem{corollary}[theorem]{Corollary}

\theoremstyle{definition}
\newtheorem{definition}[theorem]{Definition}

\theoremstyle{remark}
\newtheorem{remark}[theorem]{Remark}

\title[On Higher-Order Geometric Refinements of Classical Covariance Asymptotics]{On Higher-Order Geometric Refinements\\ of Classical Covariance Asymptotics:\\ an Approach via Intrinsic and\\ Extrinsic Information Geometry}

\author{Malik Amir}
\address{centre de recherche du chu de l'université de montr\'eal}
\address{centre de recherche du chu sainte-justine}
\address{soci\'et\'e qu\'eb\'ecoise de l'intelligence artificielle en m\'edecine}
\email{malik.amir@umontreal.ca}

\author{Sourangshu Ghosh}
\thanks{\textbf{All authors contributed equally to this work.}}
\address{department of civil engineering, indian institute of science bangalore}
\email{sourangshug@iisc.ac.in}

\date{} % usually omitted or left empty in submissions

\subjclass[2020]{Primary 62F10; Secondary 14P25, 53B20, 53C21, 62B10}
\keywords{information geometry, higher-order asymptotics, statistical manifolds, Fisher--Rao metric, statistical curvature, asymptotic inference, point estimation, singular statistical models, algebraic geometry}

\begin{document}

\maketitle

\begin{abstract}
Classical Fisher-information asymptotics describe the covariance of regular efficient estimators through the local quadratic approximation of the log-likelihood, and therefore reflect only the first-order geometry of the statistical model. In curved settings, such as mixtures, curved exponential families, latent-variable models, and manifold-constrained parameter spaces, finite-sample behavior can deviate systematically from these first-order predictions. We develop a coordinate-invariant, curvature-aware refinement of classical covariance asymptotics by viewing a regular parametric family as a Riemannian manifold \((\Theta,g)\) equipped with the Fisher--Rao metric and immersed into an ambient Hilbert space \(L^2(\mu)\) through the square-root density map. Using higher-order likelihood expansions together with the intrinsic and extrinsic geometry of this immersion, we derive, under suitable regularity, moment, and replacement assumptions, an \(n^{-2}\) correction to the leading \(n^{-1}I(\theta)^{-1}\) term in the covariance expansion of score-root, first-order efficient estimators. This correction is governed by a second-order tensor \(P_{ij}\) admitting a canonical decomposition into an intrinsic Ricci-type contraction of the Fisher--Rao Riemann curvature tensor, an extrinsic Gram-type contraction of the second fundamental form, and a Hellinger discrepancy tensor capturing higher-order probabilistic content not determined solely by the immersion geometry. The extrinsic term is positive semidefinite by construction, the full correction is invariant under smooth reparameterization, and the correction vanishes identically in full exponential families. In one dimension the intrinsic curvature vanishes, and the refinement reduces to a purely extrinsic contribution closely related to Efron's statistical curvature. We then examine how this perspective extends beyond the regular locus to singular models, where the Fisher information degenerates and quadratic local geometry breaks down. Using resolution of singularities under an additive normal crossing assumption, we describe the induced resolved metric, the role of the real log canonical threshold in determining learning rates, the corresponding posterior mean-squared-error scaling, and a curvature-based covariance expansion on the resolved space, recovering the regular theory as a special case. Beyond the asymptotic expansion itself, the framework suggests geometric diagnostics of weak identifiability and points toward curvature-aware principles for regularization, diagnostics, and optimization in modern probabilistic learning systems. 
\end{abstract}

\tableofcontents

\section{Introduction}\label{introduction}
Let \(X_1,\dots,X_n\) be independent and identically distributed random variables drawn from a parametric family of probability distributions \(\{p_\theta:\theta\in\Theta\subset\mathbb{R}^d\}\), where \(\Theta\) is an open subset and the mapping \(\theta\mapsto p_\theta\) is assumed to be sufficiently smooth\footnote{This will be made more precise in Section \ref{stat_manifold}.}. An estimator of the parameter \(\theta\) is a measurable function \(\hat\theta=\hat\theta(X_1,\dots,X_n)\) taking values in \(\Theta\). Under standard regularity conditions, the score vector for a single observation is given by
\begin{equation*}
U(\theta;X)=\nabla_\theta \log p_\theta(X),
\end{equation*}
has zero mean and finite second moments, and allows us to define the Fisher information matrix by
\begin{equation*}
I(\theta)
=
\mathbb{E}_\theta\left[U(\theta; X)U(\theta;X)^\top\right]
=
\mathbb{E}_\theta\left[
\nabla_\theta \log p_\theta(X)\,
\nabla_\theta \log p_\theta(X)^\top
\right].
\end{equation*}

The classical first-order asymptotic picture asserts that, in regular models, many efficient estimation procedures satisfy
\begin{equation*}
\operatorname{Cov}_\theta(\hat{\theta}_n)
=
\frac{1}{n}\, I(\theta)^{-1}
+
o\!\left(\frac1n\right),
\end{equation*}
or equivalently
\[
\sqrt n\,(\hat\theta_n-\theta)
\;\xrightarrow{d}\;
\mathcal N\!\bigl(0,I(\theta)^{-1}\bigr).
\]
This first-order description is sharp in regular exponential families and plays a foundational role in asymptotic statistics, signal processing, and information theory. However, it is fundamentally local in nature. Its derivation relies on a quadratic approximation of the log-likelihood around the true parameter value and therefore depends only on the second-order structure encoded by the Fisher information. Geometrically, this corresponds to endowing the parameter space \(\Theta\) with the Fisher--Rao metric and approximating the statistical model by its tangent space at \(\theta\). All higher-order features of the model, such as curvature, global nonlinearities, and embedding effects, are absent from this approximation.

Many statistical models of contemporary interest are intrinsically or extrinsically curved. Finite mixtures, latent-variable models, curved exponential families, and models with parameters constrained to manifolds such as spheres, rotation groups, or spaces of positive-definite matrices exhibit nonzero statistical curvature. In such models, the statistical manifold cannot be globally or even locally flattened beyond first order by any smooth \footnote{It is sufficient to assume \(C^{\ge 2}\).} reparameterization, since the Riemann curvature tensor is invariant under smooth reparameterization. As a consequence, the Fisher information alone does not fully characterize the local distinguishability of nearby distributions, especially when the sample size is small to moderate rather than large. Empirically, this manifests as systematic second-order deviations from Fisher-information-only predictions, even for estimators that are first-order asymptotically efficient\footnote{Let \(\{p_\theta:\theta\in\Theta\subset\mathbb{R}^d\}\) be a regular parametric model with Fisher information matrix
\(I(\theta)\). An estimator sequence \((\hat\theta_n)_{n\ge 1}\) is called (first-order) asymptotically efficient at \(\theta\)
if it is regular at \(\theta\) (in the usual H\'aj\'ek--Le Cam sense) and satisfies
\[
\sqrt{n}\,(\hat\theta_n-\theta)\ \xrightarrow{d}\ \mathcal{N}\!\big(0,\,I(\theta)^{-1}\big).
\]
Equivalently, \(\hat\theta_n\) attains the Fisher-information asymptotics to first order, in the sense that its asymptotic
covariance matrix equals \(I(\theta)^{-1}\).
}.

The purpose of this article is to develop a curvature-aware refinement of the classical first-order covariance asymptotics that remains coordinate invariant while incorporating higher-order geometric information. By viewing the parametric family as a Riemannian manifold equipped with the Fisher--Rao metric and immersed into a Hilbert space through the Hellinger map, we identify two geometrically distinct notions of curvature that contribute to the second-order behavior of estimator covariance. The first is intrinsic curvature, captured by the Riemann curvature tensor of the Fisher--Rao metric, which quantifies the failure of the Fisher--Rao geometry to be locally Euclidean beyond first order and controls geodesic deviation\footnote{For a one-parameter family of geodesics \(\gamma_u(s)\) with central geodesic \(\gamma(s)=\gamma_0(s)\), the variation field \(J(s)=\partial_u\gamma_u(s)\vert_{u=0}\) satisfies the Jacobi equation \(\nabla_{\dot\gamma}\nabla_{\dot\gamma}J + R(J,\dot\gamma)\dot\gamma=0\).}. The second is extrinsic curvature, captured by the second fundamental form of the square-root density immersion, which measures how the model, viewed as a submanifold of square-root densities, deviates from its tangent space within the unit sphere of the ambient Hilbert space. Incorporating these intrinsic and extrinsic curvature effects leads to a second-order correction of order \(n^{-2}\) in the covariance expansion. This correction quantifies how curvature modifies the effective second-order behavior of estimation error beyond the classical Fisher term.

Efron's statistical curvature, in one dimension, quantifies the extent to which a model deviates from an exponential-family representation and measures the resulting second-order information loss.
In the nonlinear regression literature, Bates and Watts distinguish between intrinsic curvature (a reparameterization-invariant property of the model manifold) and parameter-effects curvature (apparent curvature induced by a particular parametrization and removable by reparameterization).
Our decomposition is compatible with this philosophy, but differs in that the intrinsic term in our expansion is governed by the Fisher--Rao Riemann curvature tensor and is therefore purely reparameterization invariant, matching the role of Bates--Watts intrinsic curvature.
The extrinsic term in our expansion arises from the second fundamental form of the Hellinger immersion. It is likewise invariant under reparameterization and therefore should not be conflated with parameter-effects curvature.
Rather, it quantifies the bending of the statistical model in the ambient Hilbert space, yielding the Gram-type positive semidefinite contraction that appears in the \(n^{-2}\) term.

\subsection{Literature review and motivation}
The classical Cram\'er--Rao lower bound (CRLB) is the canonical local information inequality in regular parametric inference. Under standard assumptions, it bounds the covariance of (locally) unbiased estimators by the inverse of the Fisher information, translating the local quadratic approximation of the log-likelihood into a first-order variance benchmark \cite{Cramer1999,Kay1993,LehmannCasella1998,Rao1945}. A central refinement, essential for geometric generalizations, is that Fisher information is more naturally interpreted as a coordinate-free Riemannian metric tensor (the Fisher--Rao metric) on the statistical manifold, rather than as a matrix tied to a particular parametrization. Under the square-root embedding \(\psi_\theta=\sqrt{p_\theta}\), this metric arises as the pullback of the ambient \(L^2\) inner product, yielding an intrinsic geometric language for efficiency \cite{Amari2016, AmariNagaoka2000,AyJost2017a,AyJost2017b}. The canonicity of this metric is further supported by invariance characterizations under sufficient statistics, called Markov morphisms, originating in Chentsov theory and extended in subsequent work. This viewpoint explains why CRLB-type statements are best regarded as geometric statements rather than coordinate artifacts \cite{Campbell1986, Chentsov1982}. Finally, for general (possibly biased) procedures, the appropriate first-order covariance analysis involves the Jacobian of the estimator mean map \(m(\theta)=\mathbb{E}_\theta[\hat\theta]\), highlighting the role of first-order bias structure in refined asymptotic theory \cite{BickelKlaassen1993,VanDerVaart2000}.

A separate, but tightly connected tradition, asks what remains beyond first order. Higher-order score expansions and refined local inequalities (e.g.\ Bhattacharyya-type inequalities) incorporate higher derivatives of the log-likelihood and demonstrate that first-order optimality does not fully control finite-sample performance \cite{Bhattacharyya1946,LehmannCasella1998}. In likelihood asymptotics, systematic higher-order expansions quantify second-order risk and reveal how finite-sample efficiency losses depend on model curvature and higher-order structure \cite{BarndorffNielsenCox1994,KassVos2011}. The geometric content of these second-order effects was crystallized by Efron, who introduced statistical curvature to measure deviation from exponential-family ``straightness'' and showed that curvature governs second-order efficiency loss in one-dimensional curved families \cite{Efron1975}. Related second-order efficiency analyses for curved exponential families further clarified how curvature-type invariants enter second-order risk in \cite{Eguchi1983,KassVos2011}. Independently, the nonlinear regression literature developed a decomposition into intrinsic curvature versus parameter-effects curvature (the latter being removable by reparametrization), providing a conceptual template for separating genuinely geometric obstructions from coordinate-induced artifacts \cite{BatesWatts1980}. These lines of work motivate the modern information-geometric stance. To understand efficiency beyond Fisher information, one must account for intrinsic curvature and, under the Hellinger immersion perspective, extrinsic bending encoded by second fundamental form-type objects \cite{AmariNagaoka2000,AyJost2017b,Lauritzen1987}.

Such curvature sensitivity is not only a theoretical nuance. In curved settings, including mixtures, latent-variable models, and manifold-constrained parameter spaces, finite-sample behavior can deviate systematically from Fisher-information-only predictions, even when the model is locally regular, because the local geometry is already highly nontrivial \cite{KassVos2011,Marriott2002}. Beyond this regular regime, many practically important models are singular, requiring different asymptotics and tools (e.g.\ algebraic-geometric singular learning theory), and thereby highlighting where classical first-order Fisher-information asymptotics can fail outright \cite{Watanabe2009}. These observations motivate a coordinate-invariant refinement of the classical first-order covariance expansion that is sensitive to curvature while remaining interpretable on the regular locus. The goal, for sample size \(n\), is to identify the \(n^{-2}\)-term in the covariance expansion and to decompose its geometric part into an intrinsic contribution governed by contractions of the Fisher--Rao Riemann curvature tensor and an extrinsic contribution governed by Gram-type contractions of the second fundamental form of the Hellinger immersion, with the extrinsic contribution manifestly positive semidefinite. Extending such an expansion beyond the score-root and first-order efficient setting, and understanding how curvature should then interact with derivatives of the estimator mean map, is a natural direction for future work and would connect curvature-aware second-order asymptotics to the broader theory of efficiency and its extensions toward semiparametric tangent-space formalisms \cite{BickelKlaassen1993,KassVos2011,VanDerVaart2000}.

\subsection{Roadmap of the paper}

After the introduction, Section~\ref{stat_manifold} establishes the geometric framework in the regular setting. We realize the parametric family \(\{p_\theta:\theta\in\Theta\}\) through the square-root density, or Hellinger, immersion into \(L^2(\mu)\), identify the tangent spaces of the resulting statistical manifold, and show that the pullback of the ambient Hilbert inner product is exactly the Fisher--Rao metric. This identifies \((\Theta,g)\) as the intrinsic statistical manifold governing first-order local distinguishability.

Section~\ref{curv_manifold} develops the curvature structures needed for higher-order analysis. On the intrinsic side, we introduce the Levi--Civita connection and the Riemann curvature tensor of the Fisher--Rao metric, emphasizing that nonvanishing curvature is a coordinate-invariant obstruction to flattening the information geometry beyond first order. On the extrinsic side, we study the Hellinger immersion, define its second fundamental form, derive the Gauss equation relating intrinsic curvature to extrinsic bending, and discuss scalar measures of extrinsic curvature together with their relation to Efron's statistical curvature in one dimension.

Section~\ref{second_order_geometry} turns to the higher-order differential structure of the log-likelihood. We introduce the score, Hessian, and third-order score tensors, explain why the ordinary observed Hessian is not tensorial under reparameterization, and replace it by the covariant Hessian. We then derive the third-order likelihood expansion and identify the cubic score moments and exponential-connection coefficients that carry information beyond Fisher's quadratic approximation.

Section~\ref{curvature_corrected_CRLB} contains the main second-order asymptotic result. Under the regularity, moment, and stochastic-expansion assumptions stated later in the paper, we derive a coordinate-invariant covariance expansion with explicit \(n^{-2}\) correction for first-order efficient estimators. The correction is organized through the tensor \(P\), which decomposes canonically into an intrinsic Ricci-type term, an extrinsic Gram-type term coming from the second fundamental form, and a Hellinger discrepancy term capturing higher-order probabilistic information not determined solely by the immersion geometry. We also discuss the structural consequences of this decomposition, including the vanishing of the correction in full exponential families and the simplifications that occur in one dimension.

Section~\ref{sec:singular_models} extends the discussion from regular models to singular ones. There we explain why the regular manifold picture breaks down when the Fisher information degenerates or the immersion loses rank, and we examine how the geometric covariance analysis must be modified in the presence of non-identifiability, singularities, and related failures of classical first-order Fisher-information asymptotics.

Section~\ref{sec:conclusion} concludes the paper and places the main results in a broader perspective. It summarizes the curvature-based refinement of the covariance expansion, emphasizes the canonical decomposition \(P=\frac12 R^\sharp+S^\sharp+D\), and explains how these second-order effects clarify weak identifiability and the failure of naive quadratic approximations. Within this final section, Section~\ref{sec:deep_learning} discusses applications to modern learning systems, including curvature-aware regularization, diagnostic uses of the tensors \(R^\sharp\), \(S^\sharp\), and \(P\), possible refinements of natural-gradient and related second-order optimization methods, and the computational issues involved in approximating the relevant contractions in high dimension.

Finally, Section~\ref{appendix} contains the technical material omitted from the main text. It begins by explaining why a \(C^3\) immersion of the square-root density map is sufficient for the local differential-geometric analysis, without requiring a global embedding, and then develops the auxiliary lemmas, stochastic expansions, moment computations, and algebraic reductions used in the proof of the main theorem.

\section{Statistical Manifold and Fisher--Rao Geometry}\label{stat_manifold}

This section sets up the geometric framework underlying the paper. We realize the parametric family through the square-root density map as a smooth immersed manifold in \(L^2(\mu)\), identify its tangent space, and show that the pullback of the ambient Hilbert inner product is exactly the Fisher--Rao metric. This yields a coordinate-invariant geometric interpretation of local statistical distinguishability and provides the basic objects from which curvature corrections will later be constructed.

\subsection{The square-root density map}

Let \((\mathcal X,\mathcal A,\mu)\) be a measurable space with a \(\sigma\)-finite dominating measure \(\mu\). Let \(\Theta\subset\mathbb R^d\) be an open set, and let \(\{p_\theta:\theta\in\Theta\}\) be a parametric family of probability densities with respect to \(\mu\).

Assume that \(p_\theta(x)>0\) for all \(\theta\in\Theta\) and for \(\mu\)-almost every \(x\), and that the map
\[
\theta\mapsto p_\theta(x),
\]
is \(C^3\) for \(\mu\)-almost every \(x\). Assume further that for every multi-index \(\alpha\) with \(|\alpha|\le 3\),

\begin{enumerate}
    \item the partial derivatives \(\partial_\theta^\alpha p_\theta\) exist for \(\mu\)-almost every \(x\),
    \item the partial derivatives \(\partial_\theta^\alpha \sqrt{p_\theta}\) exist for \(\mu\)-almost every \(x\),
    \item the functions \(\partial_\theta^\alpha \sqrt{p_\theta}\) belong to \(L^2(\mu)\),
    \item the map \(\theta\mapsto \partial_\theta^\alpha\sqrt{p_\theta}\) is continuous in the \(L^2(\mu)\)-norm,
    \item these differentiability and continuity assumptions are compatible with differentiation under the integral sign in the arguments below.
\end{enumerate}

Define the square-root density map, also called the Hellinger map, by
\begin{equation*}
\Psi:\Theta\to L^2(\mathcal X,\mu),
\qquad
\Psi(\theta)=\psi_\theta=\sqrt{p_\theta}.
\end{equation*}

Since
\begin{equation*}
\|\psi_\theta\|_{L^2}^2=\int p_\theta(x)\,d\mu(x)=1,
\end{equation*}
the image of \(\Psi\) lies in the unit sphere
\begin{equation*}
\mathbb S=\{f\in L^2(\mu):\|f\|_{L^2}=1\}.
\end{equation*}

To obtain a \(d\)-dimensional immersed statistical manifold, we assume that the Fisher information matrix exists, is finite, and is positive definite,
\begin{equation*}
0\prec
\int
\big(\nabla_\theta\log p_\theta(x)\big)
\big(\nabla_\theta\log p_\theta(x)\big)^\top
\,p_\theta(x)\,d\mu(x)
\qquad\text{for all }\theta\in\Theta.
\end{equation*}
Equivalently, the partial derivatives \(\{\partial_i\psi_\theta\}_{i=1}^d\) are linearly independent in \(L^2(\mu)\). In that case, we restrict attention to locally identifiable parametrizations. If a model is overparameterized, one should first reduce to an identifiable parametrization or treat the singular case separately.

Thus the statistical model is realized through a finite-dimensional \(C^3\) immersion
\[
\Psi:\Theta\to \mathbb S\subset L^2(\mu),
\]
which endows the image with an immersed submanifold structure. This map is called the Hellinger map because the \(L^2\)-distance between square-root densities is proportional to the Hellinger distance between the corresponding probability distributions.

\subsection{Tangent space and orthogonality}

For any fixed \(\theta\in\Theta\), the tangent space is spanned by the partial derivatives of \(\psi_\theta\) with respect to the coordinates \(\theta_i\). Using the chain rule, one obtains
\begin{equation*}
\partial_i\psi_\theta(x)
=
\frac{\partial}{\partial\theta_i}\sqrt{p_\theta(x)}
=
\frac{1}{2}\frac{\partial_i p_\theta(x)}{\sqrt{p_\theta(x)}}
=
\frac{1}{2}\sqrt{p_\theta(x)}\,\partial_i\log p_\theta(x).
\end{equation*}

Since \(L^2(\mu)\) is equipped with its canonical inner product
\begin{equation*}
\langle f,g\rangle_{L^2}
=
\int f(x)g(x)\,d\mu(x),
\end{equation*}
every tangent vector to \(\mathbb S\), and hence every tangent vector to the immersed statistical manifold, is orthogonal to the radial direction \(\psi_\theta\). Indeed,
\begin{equation*}
\langle \psi_\theta,\partial_i\psi_\theta\rangle_{L^2}
=
\frac12\int \partial_i p_\theta(x)\,d\mu(x)
=
\frac12\,\partial_i\int p_\theta(x)\,d\mu(x)
=
0.
\end{equation*}
Therefore
\begin{equation*}
T_{\psi_\theta}\mathcal M
\subset
\{v\in L^2(\mu):\langle v,\psi_\theta\rangle_{L^2}=0\}.
\end{equation*}
The inclusion is strict because the orthogonal complement of \(\psi_\theta\) in \(L^2(\mu)\) is infinite-dimensional, whereas \(T_{\psi_\theta}\mathcal M\) has dimension \(d\).

\subsection{The induced Fisher--Rao metric}

The ambient inner product on \(L^2(\mu)\) induces a Riemannian metric on the parameter manifold by pullback through the immersion \(\Psi\). For \(\theta\in\Theta\) and coordinate directions \(i,j\), define
\begin{equation*}
g_{ij}(\theta)
=
4\langle \partial_i\psi_\theta,\partial_j\psi_\theta\rangle_{L^2}.
\end{equation*}

Substituting the expression for \(\partial_i\psi_\theta\) yields
\begin{equation*}
g_{ij}(\theta)
=
\int
\partial_i\log p_\theta(x)\,
\partial_j\log p_\theta(x)\,
p_\theta(x)\,d\mu(x)
=
\mathbb E_\theta\!\left[
\partial_i\log p_\theta(X)\,
\partial_j\log p_\theta(X)
\right].
\end{equation*}

Thus \(g_{ij}(\theta)\) coincides exactly with the entries of the Fisher information matrix \(I_{ij}(\theta)\). Consequently, the parameter space \(\Theta\) is naturally endowed with a Riemannian metric \(g\) given by the Fisher information, and the pair \((\Theta,g)\) becomes the Fisher--Rao statistical manifold.

This metric is intrinsic in the sense that it is invariant under smooth reparameterizations of \(\theta\), and it quantifies the local distinguishability of nearby probability distributions. For a smooth curve \(\theta(t)\) in parameter space, the squared norm of its velocity at \(t=0\) is
\begin{equation*}
\|\dot\theta(0)\|_g^2
=
\mathbb E_\theta\!\left[
\left.
\left(\frac{d}{dt}\log p_{\theta(t)}(X)\right)^2
\right|_{t=0}
\right],
\end{equation*}
which measures the sensitivity of the log-likelihood to infinitesimal perturbations of the parameter.

\subsection{Geometric significance}

The classical Cram\'er--Rao bound depends only on this first-order geometric structure encoded by the metric \(g\). However, the immersion
\[
\mathcal M=\Psi(\Theta)\subset \mathbb S\subset L^2(\mu)
\]
is generally curved. In particular, second derivatives \(\partial_i\partial_j\psi_\theta\) need not lie in the tangent space \(T_{\psi_\theta}\mathcal M\). This failure of local flatness leads to intrinsic and extrinsic curvature effects, which enter higher-order expansions of the likelihood and ultimately produce curvature-dependent corrections to the classical Cram\'er--Rao lower bound.

\section{Curvature of the Statistical Manifold}\label{curv_manifold}

This section introduces the curvature structure of the Fisher--Rao statistical manifold. We describe its intrinsic geometry through the Levi--Civita connection and the Riemann curvature tensor, and its extrinsic geometry through the Hellinger immersion and its second fundamental form. These objects quantify the failure of the statistical model to be locally flat, both as an abstract Riemannian manifold and as an immersed submanifold of the ambient Hilbert space.

\subsection{Intrinsic curvature}

Let \((\Theta,g)\) denote the statistical manifold induced by the Fisher--Rao metric
\[
g_{ij}(\theta)=I_{ij}(\theta),
\]
where \(\Theta\subset\mathbb{R}^d\) is an open set with local coordinates
\[
\theta=(\theta^1,\dots,\theta^d).
\]
Throughout, indices \(i,j,k,\ell,m,r\) range over \(\{1,\dots,d\}\), and we adopt the Einstein summation convention over repeated upper and lower indices.

Since \(g\) is a \(C^2\) Riemannian metric, it admits a unique torsion-free and metric-compatible Levi--Civita connection \(\nabla^{(\Theta)}\), whose Christoffel symbols are
\begin{equation*}
\Gamma^{k}_{ij}(\theta)
=
\frac{1}{2}\, g^{k\ell}(\theta)
\big(
\partial_i g_{j\ell}
+
\partial_j g_{i\ell}
-
\partial_\ell g_{ij}
\big),
\end{equation*}
where \(g^{k\ell}(\theta)\) denotes the \((k,\ell)\)-entry of the inverse matrix \(g(\theta)^{-1}\), so that
\[
g^{k\ell}(\theta)\,g_{\ell m}(\theta)=\delta^k_m.
\]

The intrinsic curvature of the statistical manifold is encoded by the Riemann curvature tensor. Its \((0,4)\)-components are
\begin{equation*}
R_{ijkl}(\theta)
=
g_{im}(\theta)\,R^{m}{}_{jkl}(\theta),
\end{equation*}
where the \((1,3)\)-curvature tensor is given in local coordinates by
\begin{equation*}
R^{m}{}_{jkl}(\theta)
=
\partial_k \Gamma^{m}_{\ell j}
-
\partial_\ell \Gamma^{m}_{k j}
+
\Gamma^{m}_{k r}\Gamma^{r}_{\ell j}
-
\Gamma^{m}_{\ell r}\Gamma^{r}_{k j}.
\end{equation*}

If \(R_{ijkl}\) vanishes identically on a neighborhood, then \((\Theta,g)\) is locally flat. Conversely, if \(R_{ijkl}\) is nonzero at some point, then no smooth change of parameters can flatten the Fisher--Rao metric on any neighborhood of that point. This nonvanishing curvature is therefore an intrinsic, coordinate-invariant obstruction to reducing the local information geometry to a purely quadratic Euclidean form beyond first order.

In geodesic normal coordinates centered at \(\theta_0\), the metric admits the expansion
\[
g_{ij}(\theta_0+\delta)
=
\delta_{ij}
-
\frac{1}{3}\,R_{ikj\ell}(\theta_0)\,\delta^k\delta^\ell
+
o(\|\delta\|^2),
\]
so the quadratic deviation from the Euclidean metric is governed by curvature and cannot be removed when \(R_{ijkl}(\theta_0)\neq 0\).

\subsection{Extrinsic curvature through the Hellinger immersion}

In addition to its intrinsic geometry, the statistical manifold carries an extrinsic geometry inherited from the Hellinger immersion
\[
\psi:\Theta\to L^2(\mu),
\qquad
\psi(\theta)=\psi_\theta=\sqrt{p_\theta}.
\]
We regard \(\Theta\) as immersed into the ambient Hilbert space \(L^2(\mu)\), equipped with its canonical inner product.

Let \(\nabla^{(L^2)}\) denote the Levi--Civita connection of \(L^2(\mu)\). Since the ambient metric is translation-invariant, \(\nabla^{(L^2)}\) is flat and coincides with the ordinary directional derivative in the ambient linear space. For the coordinate vector fields
\[
\partial_i=\frac{\partial}{\partial\theta^i},
\]
on \(\Theta\), the ambient second derivative \(\nabla^{(L^2)}_{\partial_i}(\partial_j\psi_\theta)\) admits a unique orthogonal decomposition into tangential and normal components relative to the immersed manifold. This is the Gauss formula
\begin{equation*}
\nabla^{(L^2)}_{\partial_i}(\partial_j\psi_\theta)
=
\Gamma^{k}_{ij}(\theta)\,\partial_k\psi_\theta
+
\mathrm{II}_{ij}(\theta),
\end{equation*}
where
\begin{equation*}
\mathrm{II}_{ij}(\theta)
=
\Pi^\perp\!\left(\nabla^{(L^2)}_{\partial_i}(\partial_j\psi_\theta)\right)
\in N_{\psi_\theta}\mathcal M,
\end{equation*}
is the vector-valued second fundamental form.

Here \(\Pi^\perp\) denotes orthogonal projection onto the normal space
\[
N_{\psi_\theta}\mathcal M=(T_{\psi_\theta}\mathcal M)^\perp,
\]
and the tangent space is
\[
T_{\psi_\theta}\mathcal M
=
\mathrm{span}\{\partial_1\psi_\theta,\dots,\partial_d\psi_\theta\}.
\]

The tensor \(\mathrm{II}_{ij}\) measures the extrinsic bending of the immersed statistical manifold inside the ambient Hilbert space. It vanishes identically if and only if every ambient second derivative remains tangent to the model, that is, if and only if the immersion is totally geodesic in \(L^2(\mu)\). Equivalently, \(\psi(\Theta)\) is then locally contained in the intersection of the unit sphere with a fixed finite-dimensional affine subspace of \(L^2(\mu)\).

Note that since the second fundamental form arises from the second mixed partial derivatives \(\partial_i\partial_j\psi_\theta\), it is symmetric in its indices. This symmetry is used in the verification of the algebraic Bianchi identity later on.

\subsection{The Gauss equation and the relation between intrinsic and extrinsic curvature}

Because the ambient space \(L^2(\mu)\) is flat, its curvature tensor vanishes identically. The intrinsic curvature of \((\Theta,g)\) is therefore completely determined by the second fundamental form through the Gauss equation. In coordinates, this yields
\begin{equation*}
R_{ijkl}(\theta)
=
4\Big(
\langle \mathrm{II}_{ik}(\theta),\mathrm{II}_{j\ell}(\theta)\rangle_{L^2}
-
\langle \mathrm{II}_{i\ell}(\theta),\mathrm{II}_{jk}(\theta)\rangle_{L^2}
\Big),
\end{equation*}
where the factor \(4\) is consistent with our convention
\[
g_{ij}=4\langle \partial_i\psi_\theta,\partial_j\psi_\theta\rangle_{L^2}.
\]

Thus the intrinsic curvature of the Fisher--Rao metric and the extrinsic bending of the Hellinger immersion are not independent. The former is recovered from the latter by a quadratic identity, reflecting the fact that the statistical manifold sits inside a flat ambient space.

\subsection{A scalar measure of extrinsic bending}

A natural scalar measure of extrinsic curvature is obtained by taking the squared norm of the second fundamental form. Contracting its two covariant indices with the inverse Fisher--Rao metric gives
\begin{equation*}
\kappa^2(\theta)
=
g^{ik}(\theta)\,g^{j\ell}(\theta)\,
\langle \mathrm{II}_{ij}(\theta),\mathrm{II}_{k\ell}(\theta)\rangle_{L^2}.
\end{equation*}

This quantity is invariant under smooth reparameterizations of \(\theta\), and it can be interpreted as the Hilbert--Schmidt norm squared of the bilinear form \(\mathrm{II}_\theta\). Equivalently, if \(\{e_a\}_{a=1}^d\) is any \(g\)-orthonormal basis of \(T_\theta\Theta\), then
\[
\kappa^2(\theta)
=
\sum_{a=1}^d\sum_{b=1}^d
\big\|\mathrm{II}_\theta(e_a,e_b)\big\|_{L^2}^2.
\]

In particular, \(\kappa^2(\theta)\) measures the total squared magnitude of the component of
\[
\nabla^{(L^2)}_{\partial_i}(\partial_j\psi_\theta),
\]
that points orthogonally to the tangent space spanned by the vectors \(\partial_k\psi_\theta\). It therefore quantifies how far the immersed model deviates from being locally flat in the ambient Hilbert space.

\subsection{Relation with Efron's statistical curvature}

For a one-dimensional curved family, Efron's asymptotic curvature theory shows that nonzero statistical curvature reduces the Fisher information retained by the maximum likelihood estimator \(\hat\theta_n\), viewed as a statistic, relative to the full sample.

Let \(I(\theta)\) denote the Fisher information for a single observation, so that the full i.i.d. sample carries Fisher information \(nI(\theta)\). Under standard regularity conditions, the Fisher information contained in \(\hat\theta_n\) admits the expansion
\begin{equation*}
I_{\mathrm{eff}}(\theta)
=
I_{\hat\theta_n}(\theta)
=
n\,I(\theta)
\left(
1-\frac{\kappa^2(\theta)}{n}
+
o\!\left(\frac{1}{n}\right)
\right),
\end{equation*}
where \(\kappa^2(\theta)\ge 0\) is Efron's one-parameter statistical curvature.

Note that this result concerns the \emph{information} carried by the MLE as a statistic, not directly its variance. The actual variance of the MLE involves additional terms from the third-order likelihood geometry, as made precise by the correction tensor \(P\) in Section~\ref{curvature_corrected_CRLB}. In particular, the scalar \(\kappa^2(\theta)\) appearing in Efron's information expansion is related to, but not identical to, the extrinsic curvature scalar \(g^{ij}S^\sharp_{ij}\) or the full correction \(g^{ij}P_{ij}\).

\subsection{Motivation for curvature-corrected covariance asymptotics}

In higher-dimensional models, both the intrinsic curvature \(R_{ijkl}\) and the extrinsic curvature encoded by \(\mathrm{II}_{ij}\) contribute to higher-order efficiency effects. Under standard regularity conditions, regular estimators may still attain the classical first-order asymptotic covariance
\[
n^{-1}I(\theta)^{-1},
\]
but nonzero curvature typically produces unavoidable second-order corrections of order \(n^{-2}\) to the covariance, and more generally to the bias--variance tradeoff.

This motivates the study of matrix-valued curvature corrections to the classical first-order Fisher-information asymptotics, in which the leading term \(I(\theta)^{-1}/n\) is supplemented by explicit curvature-dependent contributions obtained from contractions of \(R_{ijkl}\) and the second fundamental form \(\mathrm{II}_{ij}\). Such second-order expansions explain geometrically why the usual Fisher-information approximation can become overly optimistic in strongly curved models, and especially in regimes where regularity deteriorates, such as near singular Fisher information or close to non-identifiability, where higher-order effects become pronounced.

\section{Second-Order Geometry of the Log-Likelihood}\label{second_order_geometry}

This section studies the log-likelihood beyond the Fisher-information level by analyzing its second- and third-order differential structure from a geometric point of view. We introduce the score and higher-order score tensors, explain why the observed Hessian is not tensorial under reparameterization, and replace it by the covariant Hessian, which restores coordinate invariance. These higher-order likelihood objects will later be related to intrinsic and extrinsic curvature and will furnish the geometric and probabilistic ingredients entering the second-order covariance expansion.

\subsection{Score, Hessian, and Fisher information}

Let \(X_1,\dots,X_n\) be i.i.d. samples drawn from a smooth parametric family
\[
\{p_\theta:\theta\in\Theta\subset\mathbb{R}^d\},
\]
of densities with respect to the measurable space \((\mathcal X,\mathcal A,\mu)\) introduced in Section~\ref{stat_manifold}. The log-likelihood is
\begin{equation*}
\ell(\theta)=\sum_{k=1}^n \log p_\theta(X_k).
\end{equation*}

Define the score vector and higher-order score tensors by
\begin{equation*}
U_i(\theta)=\partial_i \ell(\theta),
\qquad
U_{ij}(\theta)=\partial_i\partial_j \ell(\theta),
\qquad
U_{ijk}(\theta)=\partial_i\partial_j\partial_k \ell(\theta).
\end{equation*}
The score has zero expectation, and the negative expectation of the Hessian recovers the Fisher information,
\begin{equation*}
\mathbb{E}_\theta[-U_{ij}(\theta)]
=
-n\,\mathbb{E}_\theta[\partial_i\partial_j \log p_\theta(X)]
=
n\,\mathbb{E}_\theta[\partial_i\log p_\theta(X)\,\partial_j\log p_\theta(X)]
=
n\,I_{ij}(\theta).
\end{equation*}

This second-order structure is exactly what underlies the classical first-order Fisher-information asymptotics. It reflects only the local quadratic approximation of the log-likelihood and therefore captures only first-order statistical geometry.

\subsection{Taylor expansion and departure from quadraticity}

To go beyond the quadratic approximation, consider the Taylor expansion of \(\ell(\theta)\) around a fixed point \(\theta\),
\begin{equation*}
\ell(\theta+\delta)
=
\ell(\theta)
+
U_i(\theta)\,\delta^i
+
\tfrac12 U_{ij}(\theta)\,\delta^i\delta^j
+
\tfrac16 U_{ijk}(\theta)\,\delta^i\delta^j\delta^k
+
o(\|\delta\|^3),
\end{equation*}
where Einstein summation is understood.

The cubic term controls the leading deviation of the log-likelihood from a purely quadratic form. It therefore encodes information that is invisible to the Fisher information matrix alone and is the first place where genuinely higher-order geometric effects appear.

\subsection{The covariant Hessian}

Although the Fisher information \(I_{ij}(\theta)\) defines an intrinsic \((0,2)\)-tensor, the observed Hessian
\[
U_{ij}(\theta)=\partial_i\partial_j\ell(\theta),
\]
is not itself tensorial under reparameterization. The intrinsic second derivative of the scalar field \(\ell\) is instead the covariant Hessian
\begin{equation*}
(\nabla^2\ell)_{ij}(\theta)
=
\nabla_i\nabla_j \ell(\theta)
=
\partial_i\partial_j \ell(\theta)
-
\Gamma^{k}_{ij}(\theta)\,\partial_k\ell(\theta)
=
U_{ij}(\theta)-\Gamma^{k}_{ij}(\theta)\,U_k(\theta).
\end{equation*}

Equivalently,
\begin{equation*}
U_{ij}(\theta)
=
(\nabla^2\ell)_{ij}(\theta)
+
\Gamma^{k}_{ij}(\theta)\,U_k(\theta).
\end{equation*}

Thus the naive second derivatives of the log-likelihood decompose into a tensorial part and a coordinate-dependent correction involving the score. This reflects the fact that the observed Hessian transforms covariantly rather than tensorially.

Taking expectations gives
\begin{equation*}
\mathbb E_\theta[U_{ij}(\theta)]
=
-n\,I_{ij}(\theta),
\qquad
\mathbb E_\theta[(\nabla^2\ell)_{ij}(\theta)]
=
-n\,I_{ij}(\theta),
\end{equation*}
so the connection correction disappears in expectation but remains essential at the sample level.

This point is fundamental. Even though \(\ell(\theta)\) is a scalar, its second partial derivatives \(\partial_i\partial_j\ell\) do not transform as the components of a \((0,2)\)-tensor, because a change of coordinates introduces an extra term proportional to the first derivatives \(\partial_k\ell=U_k\). The affine connection term \(\Gamma^{k}_{ij}U_k\) is precisely what removes this defect and restores coordinate invariance.

\subsection{Third-order likelihood geometry}

The third-order score tensor contains genuinely new information beyond Fisher's quadratic approximation. A direct computation gives
\begin{equation*}
\mathbb{E}_\theta[U_{ijk}(\theta)]
=
-n\,\partial_k I_{ij}(\theta)
-
n\,\mathbb{E}_\theta\!\Big[
\big(\partial_i\partial_j \log p_\theta(X)\big)
\big(\partial_k \log p_\theta(X)\big)
\Big].
\end{equation*}

To organize this information geometrically, introduce the cubic score moment, also called the Amari--Chentsov tensor,
\begin{equation*}
T_{ijk}(\theta)
=
\mathbb{E}_\theta\!\big[
\partial_i\log p_\theta(X)\,
\partial_j\log p_\theta(X)\,
\partial_k\log p_\theta(X)
\big],
\end{equation*}
together with the exponential-connection coefficients
\begin{equation*}
\Gamma^{(e)}_{ijk}(\theta)
=
\mathbb{E}_\theta\!\Big[
\big(\partial_i\partial_j\log p_\theta(X)\big)
\big(\partial_k\log p_\theta(X)\big)
\Big].
\end{equation*}

In terms of these tensors, the expectation of the third-order score tensor may be written in the symmetric form
\begin{equation*}
\mathbb{E}_\theta[U_{ijk}(\theta)]
=
-n\Big(
\Gamma^{(e)}_{ijk}
+
\Gamma^{(e)}_{ikj}
+
\Gamma^{(e)}_{jki}
+
T_{ijk}
\Big).
\end{equation*}
This identity makes explicit that the first deviation from the quadratic Fisher approximation is governed by third-order objects. These higher-order likelihood tensors will later be reorganized into curvature-dependent contributions entering the second-order covariance expansion.

\section{A Curvature-Corrected Second-Order Covariance Expansion}\label{curvature_corrected_CRLB}

This section derives the main second-order refinement of the classical first-order Fisher-information asymptotics. Starting from the asymptotic expansion of the mean-squared error of a locally unbiased estimator, we show that the \(n^{-2}\) correction admits a coordinate-invariant decomposition into an intrinsic term coming from the Riemann curvature of the Fisher--Rao metric and an extrinsic term coming from the second fundamental form of the Hellinger immersion. This is the point where the geometric structures introduced in the previous sections become an explicit quantitative correction to the usual first-order covariance approximation.

\subsection{Asymptotic setting and second-order expansion}

Let \(\hat\theta_n=\hat\theta_n(X_1,\dots,X_n)\) be an estimator of the true parameter \(\theta\in\Theta\subset\mathbb{R}^d\). We restrict attention to estimators that are unbiased or locally unbiased at \(\theta\), meaning
\begin{equation*}
\mathbb{E}_\theta[\hat\theta_n]=\theta,
\qquad
\left.\partial_i \mathbb{E}_{\theta'}[\hat\theta_n^{\,a}]\right|_{\theta'=\theta}
=
\delta_i^{\ a},
\end{equation*}
where \(\delta_i^{\ a}\) is the Kronecker delta.

Under the standing regularity assumptions of Section~\ref{stat_manifold}, and for asymptotically efficient regular estimators, we work on the regular locus
\begin{equation*}
\Theta_{\mathrm{reg}}
=
\{\theta\in\Theta:\lambda_{\min}(I(\theta))>0\}.
\end{equation*}
On any compact subset \(K\subset\Theta_{\mathrm{reg}}\), we assume that \(\hat\theta_n\) admits a stochastic expansion of the form
\begin{equation*}
\hat\theta_n
=
\theta
+
\frac{1}{\sqrt n}A(\theta)
+
\frac{1}{n}B(\theta)
+
o_p(n^{-1}),
\end{equation*}
where \(A(\theta)\) and \(B(\theta)\) are random vectors with bounded moments, and their distributions depend smoothly on \(\theta\).

The leading stochastic fluctuations of \(\hat\theta_n\) are therefore of order \(n^{-1/2}\), while bias and higher-order geometric effects enter at order \(n^{-1}\) and smaller.

The matrix mean-squared error at \(\theta\) is
\begin{equation*}
\mathrm{MSE}_\theta(\hat\theta_n)
=
\mathbb{E}_\theta\!\left[
(\hat\theta_n-\theta)(\hat\theta_n-\theta)^\top
\right]
=
\mathrm{Cov}_\theta(\hat\theta_n)
+
\mathrm{Bias}_\theta(\hat\theta_n)\,
\mathrm{Bias}_\theta(\hat\theta_n)^\top,
\end{equation*}
where
\begin{equation*}
\mathrm{Bias}_\theta(\hat\theta_n)
=
\mathbb{E}_\theta[\hat\theta_n]-\theta,
\end{equation*}
and
\begin{equation*}
\mathrm{Cov}_\theta(\hat\theta_n)
=
\mathbb{E}_\theta\!\left[
(\hat\theta_n-\mathbb{E}_\theta[\hat\theta_n])
(\hat\theta_n-\mathbb{E}_\theta[\hat\theta_n])^\top
\right].
\end{equation*}

Using the stochastic expansion, together with
\[
\mathbb{E}_\theta[A(\theta)]=0
\qquad\text{and}\qquad
\mathrm{Cov}_\theta(A(\theta))=I(\theta)^{-1},
\]
which encode first-order asymptotic efficiency, one obtains the second-order asymptotic expansion
\begin{equation*}
\mathrm{MSE}_\theta(\hat\theta_n)
=
\frac{1}{n}I(\theta)^{-1}
+
\frac{1}{n^2}C(\theta)
+
o\!\left(\frac{1}{n^2}\right),
\end{equation*}
uniformly for \(\theta\in K\).

\subsection{Structure of the second-order correction}

To identify the second-order matrix \(C(\theta)\), one must account for both geometric and probabilistic higher-order structure of the statistical model \((\Theta,g)\). The geometric contributions involve the Riemann curvature tensor \(R_{ijkl}\) of the Levi--Civita connection of the Fisher--Rao metric \(g\) and the second fundamental form \(\mathrm{II}_{ij}\) of the Hellinger immersion
\[
\Psi:\Theta\to L^2(\mu),
\qquad
\Psi(\theta)=\psi_\theta=\sqrt{p_\theta}.
\]

Define the Ricci-type contraction of the intrinsic curvature and the Gram-type contraction of the second fundamental form:
\begin{equation*}
\big(R^\sharp(\theta)\big)_{ij}
=
g^{k\ell}(\theta)\,R_{ikj\ell}(\theta),
\qquad
\big(S^\sharp(\theta)\big)_{ij}
=
g^{k\ell}(\theta)\,
\big\langle \mathrm{II}_{ik}(\theta),\mathrm{II}_{j\ell}(\theta)\big\rangle_{L^2}.
\end{equation*}

The algebraic reduction carried out in the proof (Lemma~\ref{lem:algebraic_reduction}) shows that the \(n^{-2}\)-coefficient in the covariance expansion is given by a second-order correction tensor \(P_{ij}(\theta)\) that admits the canonical decomposition
\begin{equation*}
P_{ij}(\theta)
=
\frac{1}{2}\,R^\sharp_{ij}(\theta)
+
S^\sharp_{ij}(\theta)
+
D_{ij}(\theta),
\end{equation*}
where \(D_{ij}(\theta)\) is the \emph{Hellinger discrepancy tensor}. It captures the residual deviation between the probabilistic score-moment structure and the ambient \(L^2\)-geometry of the Hellinger immersion, and is defined explicitly in Lemma~\ref{lem:algebraic_reduction}.

Note the following important structural properties
\begin{itemize}
\item \(S^\sharp(\theta)\succeq 0\) by construction.
\item In a full exponential family (where \(R^\sharp=0\), \(S^\sharp\ne 0\) in general), the discrepancy \(D_{ij}\) exactly cancels the extrinsic term, yielding \(P_{ij}=0\). This is consistent with the well-known fact that full exponential families have no second-order correction.
\item When \(d=1\), \(R^\sharp=0\) identically and \(P_{11}=\mathrm{Var}_{\theta}(\partial_{11}\log p_\theta(X))-\frac{1}{4}(\mathbb{E}_\theta[\partial_{111}\log p_\theta(X)])^2\) in normal coordinates. 
\end{itemize}

The full second-order correction to the covariance is
\begin{equation*}
C(\theta)
=
I(\theta)^{-1}
\,P(\theta)\,
I(\theta)^{-1}
=
I(\theta)^{-1}
\left(
\frac12\,R^\sharp(\theta)+S^\sharp(\theta)+D(\theta)
\right)
I(\theta)^{-1},
\end{equation*}
and under the hypotheses of the theorem, for each fixed \(\theta\in\Theta_{\mathrm{reg}}\) the covariance admits the expansion
\begin{equation*}
\mathrm{Cov}_\theta(\hat\theta_n)
=
\frac{1}{n}I(\theta)^{-1}
+
\frac{1}{n^2}
I(\theta)^{-1}
\,P(\theta)\,
I(\theta)^{-1}
+
o\!\left(\frac{1}{n^2}\right).
\end{equation*}

\subsection{Main theorem}

We now state the main theorem. Its proof is given below, with the detailed technical lemmas and computations deferred to Appendix~\ref{appendix}.

\begin{theorem}[Geometric decomposition of the second-order covariance correction]
\label{thm:main}
Let $\{p_\theta : \theta \in \Theta \subset \mathbb{R}^d\}$ be a $C^3$ parametric family with Fisher information matrix $I(\theta) = (g_{ij}(\theta))$ positive definite at $\theta$. Equip $\Theta$ with the Fisher--Rao metric $g$ and let $R_{ikj\ell}(\theta)$ denote the Riemann curvature tensor of its Levi--Civita connection. Let
\[
\Psi : \Theta \to L^2(\mu), \qquad \Psi(\theta) = \sqrt{p_\theta},
\]
be the Hellinger (square-root density) immersion, with second fundamental form $\mathrm{II}_{ij}(\theta)$. Define the Ricci-type intrinsic contraction and the Gram-type extrinsic contraction
\[
R^\sharp_{ij}(\theta) := g^{k\ell}(\theta)\, R_{ikj\ell}(\theta),
\qquad
S^\sharp_{ij}(\theta) := g^{k\ell}(\theta)\, \langle \mathrm{II}_{ik}(\theta),\, \mathrm{II}_{j\ell}(\theta) \rangle_{L^2}.
\]

Suppose $\hat\theta_n$ is a first-order efficient estimator for which the second-order covariance expansion
\[
\Cov_\theta(\hat\theta_n)
=
\frac{1}{n}\, I(\theta)^{-1}
+
\frac{1}{n^2}\, C(\theta)
+ o(n^{-2}),
\]
holds, under the regularity, moment, and stochastic-expansion conditions detailed in the Appendix (Section~\ref{appendix}). Then the $n^{-2}$ correction matrix $C(\theta)$ is given by
\[
C(\theta) = I(\theta)^{-1}\, P(\theta)\, I(\theta)^{-1},
\]
where the \emph{second-order correction tensor} $P(\theta)$ admits the canonical decomposition
\begin{equation}\label{eq:P_decomp}
P_{ij}(\theta)
=
\underbrace{\frac{1}{2}\, R^\sharp_{ij}(\theta)}_{\text{intrinsic (Ricci-type)}}
\;+\;
\underbrace{S^\sharp_{ij}(\theta)}_{\text{extrinsic (Gram-type)}}
\;+\;
\underbrace{D_{ij}(\theta).}_{\text{Hellinger discrepancy}}
\end{equation}

Here $D_{ij}(\theta)$ depends on the fourth-order score moments and mixed third-order score--Hessian moments that are not captured by the $L^2$-geometry of $\Psi$. The decomposition~\eqref{eq:P_decomp} is \emph{coordinate-invariant} and satisfies
\begin{enumerate}
\item $S^\sharp(\theta) \succeq 0$ \emph{(positive semidefinite)} by construction,
\item $P_{ij}(\theta) \equiv 0$ in any full exponential family,
\item when $d=1$, $R^\sharp \equiv 0$ identically and $P$ reduces to a purely extrinsic correction.
\end{enumerate}
\end{theorem}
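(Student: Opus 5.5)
The plan is to derive the \(n^{-2}\) covariance coefficient explicitly from the stochastic expansion and then reorganize it geometrically. Since \(D\) is defined as a residual in Lemma~\ref{lem:algebraic_reduction}, the decomposition \eqref{eq:P_decomp} is essentially a bookkeeping identity. The real content lies in three things: identifying the raw coefficient as a tensor, proving that \(S^\sharp\) has the stated form, and verifying properties (1)--(3).

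\textbf{Step 1: the raw coefficient.} Write the score-root estimator's stochastic expansion to third order. Let \(Z_i=n^{-1/2}U_i\), \(Z_{ij}=n^{-1/2}(U_{ij}+nI_{ij})\), and let \(K_{ijk}=n^{-1}\mathbb E[U_{ijk}]\). Inverting the score equation gives
\[
\sqrt n(\hat\theta-\theta)^a = I^{ab}Z_b + n^{-1/2}\big(I^{ab}Z_{bc}I^{cd}Z_d+\tfrac12 I^{ab}K_{bcd}I^{ce}I^{df}Z_eZ_f\big)+n^{-1}(\cdots).
\]
I will assume the moment and replacement conditions of the Appendix. These let expectations of the truncated expansion stand in for the true moments. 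Computing \(\Cov\) to order \(n^{-2}\) then gives \(C=I^{-1}PI^{-1}\), where \(P\) is an explicit polynomial in four families of moments: the fourth score cumulants, \(\Cov(\partial_{ij}\log p,\partial_k\log p\,\partial_\ell\log p)\), \(\Var(\partial_{ij}\log p)\), and products of the third-order tensors \(T\) and \(\Gamma^{(e)}\) from Section~\ref{second_order_geometry}. The ordering of terms mirrors the classical Bhattacharyya and Efron computation.

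\textbf{Step 2: coordinate invariance and the geometric reduction.} To show invariance, I will rewrite every ordinary Hessian through the covariant Hessian \(\nabla^2\ell=U_{ij}-\Gamma^k_{ij}U_k\). I then check that the \(\Gamma\)-dependent pieces cancel in \(P\), so that \(P\) is a \((0,2)\)-tensor. Next comes the geometric reduction. Differentiating \(\partial_j\psi=\frac12\sqrt p\,\partial_j\log p\) gives
\[
\partial_i\partial_j\psi=\tfrac12\sqrt p\big(\partial_{ij}\log p+\tfrac12\partial_i\log p\,\partial_j\log p\big).
\]
By the Gauss formula, \(\mathrm{II}_{ij}\) is the part of this vector orthogonal to the span of the \(\partial_k\psi\). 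Hence
\[
4\langle\mathrm{II}_{ik},\mathrm{II}_{j\ell}\rangle
\]
equals the residual covariance of \(\partial_{ik}\log p+\frac12\partial_i\log p\,\partial_k\log p\) after regression on the score. Contracting with \(g^{k\ell}\) produces \(S^\sharp\), up to the fixed normalization constant coming from the factor \(4\). The Gauss equation of Section~\ref{curv_manifold} then expresses \(R^\sharp\) as \(g^{k\ell}\) times the difference of two such Gram terms, i.e., as (mean curvature)\(\cdot\mathrm{II}\) minus a Gram term. I will collect the terms of \(P\) that match \(\frac12R^\sharp+S^\sharp\), and define \(D:=P-\frac12R^\sharp-S^\sharp\), as in Lemma~\ref{lem:algebraic_reduction}. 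The matching shows that \(D\) involves only fourth-order score moments and mixed Hessian--score moments, which is the stated description of \(D\). It is tensorial because \(P\), \(R^\sharp\), and \(S^\sharp\) all are.

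\textbf{Step 3: properties (1)--(3).} Property (1) holds because \(S^\sharp_{ij}v^iv^j=\sum_a\|v^i\mathrm{II}(\cdot_i,e_a)\|^2\ge0\) in a \(g\)-orthonormal frame. For property (2), note that in a full exponential family with natural parameters, \(\partial_{ij}\log p=-\partial_{ij}A(\theta)\) is deterministic. The Hessian fluctuation terms \(Z_{ij}\) therefore vanish, and the MLE is a smooth function of the sufficient-statistic mean. The remaining cumulant terms in \(P\) should then cancel; I will verify this directly from the Step~1 formula rather than through the geometric decomposition. Property (3) holds because \(R\) is antisymmetric in its last pair of indices, so every component vanishes when \(d=1\). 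Then \(P=S^\sharp+D\), and I will check that this reduces to the one-dimensional formula stated before the theorem.

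\textbf{Main obstacle.} I expect the hardest part to be the Step~2 bookkeeping: establishing that the \(\Gamma\)-terms cancel exactly, and matching the coefficient \(\frac12\) in front of \(R^\sharp\). A mismatch there would simply be absorbed into \(D\). The real risk is therefore that the construction becomes tautological. To keep it meaningful, I will insist that \(D\) be expressed only through moments outside the \(L^2\)-geometry of \(\Psi\), and I will check it against the one-dimensional formula.
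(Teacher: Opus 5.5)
Your Step~2 cannot succeed as planned. The $n^{-2}$ coefficient of $\Cov_\theta(\hat\theta_n)$ is not a tensor, so rewriting $U_{ij}$ through $\nabla^2\ell$ will not make the $\Gamma$-terms cancel. The obstruction is that $\hat\theta_n-\theta$ is a coordinate difference, not a tangent vector. For $d=1$ and $\phi=f(\theta)$ one has $\hat\phi-\phi=f'\Delta+\tfrac12 f''\Delta^2+\tfrac16 f'''\Delta^3+\cdots$. Hence the $n^{-2}$ coefficient of $\Var(\hat\phi)$ is $(f')^2C_\theta+\bigl(\tfrac12(f'')^2+f'f'''\bigr)I^{-2}$, plus $f'f''$ times a third-moment term.

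Concretely, take $p_\lambda(x)=\lambda e^{-\lambda x}$. The estimator $\hat\mu=\bar X$ has $\Var(\hat\mu)=\mu^2/n$ exactly, while $\hat\lambda=1/\bar X$ has $\Var(\hat\lambda)=\lambda^2/n+4\lambda^2/n^2+O(n^{-3})$. Your $P$ is therefore $0$ in one chart and $4\lambda^{-2}$ in the other.

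The same example defeats your check of property (2). Here $-\lambda$ is the natural parameter. Deterministic $\partial_{ij}\log p$ removes $Z_{ij}$, but not the $K_{bcd}$-terms (here $\partial^3A\neq 0$) or the $n^{-3/2}$ term, so nothing cancels.

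The paper does not establish either point. It fixes Fisher--Rao normal coordinates at $\theta$ in Step~2 of its proof, computes there, and takes the resulting expression as $P$. Tensoriality of $P$ therefore holds by construction, while $C=I^{-1}PI^{-1}$ is only identified in that frame. Its item (2) is asserted, not derived. In fact (2) fails for the paper's own one-dimensional formula $P_{11}=\Var(s_{11})-\tfrac14\kappa^2$. In the arc-length coordinate $\tau=\log\lambda$ (where $g\equiv 1$) one has $\partial_\tau^2\log p=\partial_\tau^3\log p=-\lambda X$, giving $P_{11}=1-\tfrac14=\tfrac34$.

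Separately, your Step~1 is a different computation from the paper's and will not reproduce the $P$ of Lemma~\ref{lem:algebraic_reduction}. So ``define $D$ as in the Lemma'' and your one-dimensional check do not connect. The paper uses Assumptions~9--10 to discard two terms:
\begin{itemize}
\item the cross term $n^{-3/2}\E_\theta[A_nB_n^\top]$, which is genuinely of order $n^{-2}$ since joint third cumulants of normalized sums are $O(n^{-1/2})$;
\item the $n^{-3/2}$ term of $\hat\theta_n$.
\end{itemize}
It then evaluates only $\lim\E_\theta[B_nB_n^\top]$ by Gaussian Wick contractions. The extra moment families in your list (fourth-order score moments and $\Cov(\partial_{ij}\log p,\partial_k\log p\,\partial_\ell\log p)$) enter exactly through the discarded terms.

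In the arc-length example your honest coefficient would be $\tfrac12$, not $\tfrac34$: the variance of $\log\bar X$ is the trigamma function at $n$, i.e.\ $n^{-1}+\tfrac12 n^{-2}+O(n^{-3})$. The value $\tfrac34$ is the MSE coefficient there, because the bias is $1/(2n)+O(n^{-2})$.

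A smaller slip: $\mathrm{II}$ is the second fundamental form in $L^2(\mu)$, not in $\mathbb S$. It therefore contains the radial component $-\tfrac14 g_{ij}\psi_\theta$. So $4\langle\mathrm{II}_{ik},\mathrm{II}_{j\ell}\rangle_{L^2}$ is the uncentred residual second moment, i.e.\ your residual covariance plus $\tfrac14 g_{ik}g_{j\ell}$.
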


\begin{proof}
The proof consists of five components: the identification of $C(\theta)$, the geometric decomposition, and the three structural properties. The detailed technical machinery about stochastic expansion analysis, moment bounds, Wick contractions, and the complete computation of $\E_\theta[B_{i,n}B_{j,n}]$, is given in Appendix (Section~\ref{appendix}).

\medskip
\noindent\textit{Step 1: Identification of \(C(\theta)\).}
Fix \(\theta\in\Theta_{\mathrm{reg}}\). The assumed stochastic expansion
\[
\hat\theta_n-\theta
=
\frac1{\sqrt n}A_n(\theta)+\frac1n B_n(\theta)+o_p(n^{-1}),
\]
together with the moment and remainder bounds of Appendix~\ref{appendix} (specifically, those of Proposition~\ref{prop:score_hessian_orders} and the uniform integrability hypotheses), yields
\[
\operatorname{MSE}_\theta(\hat\theta_n)
=
\frac1n\E_\theta[A_nA_n^\top]
+\frac1{n^2}\E_\theta[B_nB_n^\top]
+o(n^{-2}).
\]
The first-order efficiency condition \(\Cov_\theta(A_n)\to I(\theta)^{-1}\) recovers the leading \(n^{-1}I(\theta)^{-1}\)-term. The negligible bias assumption yields \(\operatorname{Cov}_\theta(\hat\theta_n)=\operatorname{MSE}_\theta(\hat\theta_n)+o(n^{-2})\). Hence
\[
C(\theta)=\lim_{n\to\infty}\E_\theta[B_n(\theta)B_n(\theta)^\top].
\]

\medskip
\noindent\textit{Step 2: Computation of \(\E_\theta[B_{i,n}B_{j,n}]\).}
Choose Riemannian normal coordinates for the Fisher--Rao metric at \(\theta\), so that \(g_{ij}(\theta)=\delta_{ij}\) and \(\Gamma^k_{ij}(\theta)=0\) at the base point. The perturbative solution of the score equation \(U(\hat\theta_n)=0\) identifies (see Lemma~\ref{lem:B_identification})
\[
B_{i,n}
=
H_{ij,n}A_n^j+\tfrac12 K_{ijk,n}A_n^jA_n^k+o_p(1),
\]
where \(H_{ij,n}=\frac1{\sqrt n}((\nabla^2\ell)_{ij}+ng_{ij})\) is the centered covariant-Hessian fluctuation and \(K_{ijk,n}=\frac1n(\nabla^3\ell)_{ijk}\). By the joint CLT and Wick contractions (Lemma~\ref{lem:joint_clt_wick}), together with the replacement hypotheses, the second moment \(\E_\theta[B_{i,n}B_{j,n}]\) converges to
\begin{align*}
P_{ij}(\theta)
&=
g^{km}(\theta)\Big(\E_\theta[s_{ik}(X)s_{jm}(X)]-g_{ik}g_{jm}\Big)
+\Gamma^{(e),m}_{ik}\Gamma^{(e),k}_{jm}
+\Gamma^{(e),k}_{ik}\Gamma^{(e),m}_{jm}
\\
&\quad
+\tfrac14\,\kappa_{ik\ell}\kappa_{jrs}
\Big(
g^{k\ell}g^{rs}+g^{kr}g^{\ell s}+g^{ks}g^{\ell r}
\Big)
\\
&\quad
+\tfrac12\,\kappa_{jrs}\Big(
\Gamma^{(e),k}_{ik}g^{rs}+\Gamma^{(e),r}_{ik}g^{ks}+\Gamma^{(e),s}_{ik}g^{kr}
\Big)
\\
&\quad
+\tfrac12\,\kappa_{ik\ell}\Big(
\Gamma^{(e),m}_{jm}g^{k\ell}+\Gamma^{(e),k}_{jm}g^{m\ell}+\Gamma^{(e),\ell}_{jm}g^{mk}
\Big),
\end{align*}
where \(s_i(X)=\partial_i\log p_\theta(X)\), \(\Gamma^{(e)}_{ijk}=\E_\theta[s_{ij}s_k]\), and \(\kappa_{ijk}=\E_\theta[s_{ijk}]\). These second-moment computations are carried out in full in the appendix proof.

In the inverse-metric frame, $C(\theta)=I(\theta)^{-1}P(\theta)I(\theta)^{-1}$.

\medskip
\noindent\textit{Step 3: Geometric decomposition.}
The Hellinger immersion \(\Psi(\theta)=\sqrt{p_\theta}\) provides the link between the probabilistic tensors above and Riemannian geometry. Setting \(e_i=\partial_i\psi_\theta\) and \(e_{ij}=\partial_{ij}\psi_\theta\), one has
\[
e_i=\tfrac12 s_i\psi_\theta,
\qquad
e_{ij}=\tfrac12 s_{ij}\psi_\theta+\tfrac14 s_is_j\psi_\theta.
\]
In normal coordinates at \(\theta\), the Gauss decomposition gives \(e_{ij}=\mathrm{II}_{ij}\) (purely normal, since \(\Gamma=0\)). From the ambient inner products of these vectors, the Gauss equation, and the tangency condition \(\langle \mathrm{II}_{ij},e_k\rangle_{L^2}=0\), one derives the identities
\[
\Gamma^{(e)}_{ijk}=-\tfrac12 T_{ijk},
\qquad
\kappa_{ijk}=\tfrac12 T_{ijk},
\]
where \(T_{ijk}=\E_\theta[s_is_js_k]\) is the cubic score moment.

Substituting these relations into the expression for \(P_{ij}\) and comparing with the Ricci-type and Gram-type contractions computed from the immersion, one obtains (by the algebraic reduction of Lemma~\ref{lem:algebraic_reduction})
\[
P_{ij}(\theta)
=
\tfrac12\,R^\sharp_{ij}(\theta)+S^\sharp_{ij}(\theta)+D_{ij}(\theta),
\]
where the Hellinger discrepancy \(D_{ij}\) collects the terms involving fourth-order score moments and mixed third-order score--Hessian moments that are not determined by the immersion geometry.

\medskip
\noindent\textit{Step 4: Positive semidefiniteness of \(S^\sharp\).}
For any \(v\in\mathbb{R}^d\),
\[
v^iv^jS^\sharp_{ij}(\theta)
=
g^{k\ell}(\theta)\,\langle v^i\mathrm{II}_{ik}(\theta),v^j\mathrm{II}_{j\ell}(\theta)\rangle_{L^2}.
\]
In normal coordinates at \(\theta\), this becomes
\[
v^iv^jS^\sharp_{ij}(\theta)
=
\sum_{k=1}^d \big\|v^i\mathrm{II}_{ik}(\theta)\big\|_{L^2}^2\ge 0.
\]
Hence \(S^\sharp(\theta)\succeq 0\).

\medskip
\noindent\textit{Step 5: Structural properties.}
\begin{enumerate}
\item[(2)] In a full exponential family, the sufficient statistic maps form a globally affine coordinate system in which the log-likelihood is exactly quadratic. All third and higher covariant derivatives of the log-likelihood are deterministic (in fact, zero in exponential coordinates), so that \(D_{ij}\) exactly cancels \(S^\sharp_{ij}\) and the Riemann curvature vanishes. Hence \(P_{ij}\equiv 0\).
\item[(3)] When \(d=1\), the Riemann curvature tensor \(R_{ijkl}\) has only one independent component \(R_{1111}\), and the symmetries of the Riemann tensor force \(R_{1111}=0\). Therefore \(R^\sharp\equiv 0\), and the correction reduces to \(P=S^\sharp+D\), a purely extrinsic contribution. \qedhere
\end{enumerate}
\end{proof}

\begin{remark}[Role of the Hellinger discrepancy]
\label{rem:discrepancy}
The Hellinger discrepancy \(D_{ij}\) arises because the probabilistic content of the score moments is richer than what is captured by the \(L^2\)-geometry of the immersion \(\Psi\). In particular, the fourth score moments \(\mathbb E[s_i s_j s_k s_\ell]\) and the mixed third moments \(\mathbb E[s_{ij}s_k s_\ell]\) contribute to \(P_{ij}\) but are not fully determined by the inner products \(\langle \mathrm{II}_{ik}, \mathrm{II}_{j\ell}\rangle_{L^2}\). Note that \(D_{ij}\) vanishes identically whenever the second derivatives of the log-likelihood are deterministic given the parameter, which occurs in full exponential families. In such models, \(P_{ij}=0\), consistent with the classical result that full exponential families achieve the Cram\'er--Rao bound to all orders.
\end{remark}

\subsection{Interpretation of the correction}

This expansion makes precise how both curvature and higher-order probabilistic structure affect estimation beyond first order. The correction tensor \(P_{ij}(\theta)\) receives contributions from three sources
\begin{enumerate}
\item The \emph{intrinsic} term \(\frac12 R^\sharp(\theta)\), a Ricci-type contraction of the Riemann curvature of the Fisher--Rao metric, which measures the failure of local flatness.
\item The \emph{extrinsic} term \(S^\sharp(\theta)\succeq 0\), a Gram-type contraction of the second fundamental form of the Hellinger immersion, which measures bending of the model in the ambient Hilbert space.
\item The \emph{Hellinger discrepancy} \(D(\theta)\), which captures higher-order probabilistic content (fourth score moments and mixed cubic moments) not fully determined by the \(L^2\)-geometry of the immersion.
\end{enumerate}

When \(R^\sharp(\theta)\) is positive semidefinite, it increases the second-order covariance term. The extrinsic contribution \(S^\sharp(\theta)\) is always positive semidefinite by construction. The discrepancy \(D(\theta)\) can be negative (as in full exponential families, where it exactly cancels \(S^\sharp\) to give \(P=0\)).

In full exponential families, \(P_{ij}(\theta)\equiv 0\), and the covariance expansion reduces to the classical first-order Fisher-information term up to order \(n^{-2}\). More generally, the magnitude and sign of \(P_{ij}\) quantify the departure of the model from exponential-family behavior at second order.

Thus the usual first-order covariance approximation is not universal at finite sample size. In non-exponential models, the correction tensor \(P\) is generically nonzero, producing a systematic \(n^{-2}\) correction whose geometric and probabilistic content is made explicit by the decomposition \(P=\frac12 R^\sharp+S^\sharp+D\).

\subsection{Relation to classical higher-order asymptotic refinements}

Classical higher-order refinements in parametric inference, such as Bhattacharyya-type inequalities and related likelihood expansions, also incorporate higher derivatives of the likelihood by enlarging the class of estimating functions or moment constraints entering a Cauchy--Schwarz argument. The present approach is compatible with this general principle, since the second-order covariance correction derived here ultimately depends on third-order likelihood geometry.

The difference is mainly organizational and invariant-theoretic. First, the relevant higher-order derivative information is packaged into coordinate-invariant geometric tensors, namely the Fisher--Rao Riemann tensor \(R_{ikjl}\) and the second fundamental form \(\mathrm{II}_{ij}\) of the Hellinger immersion, rather than being retained as coordinate-dependent collections of higher derivatives. Second, the extrinsic term is automatically positive semidefinite because it is a Gram-type contraction in the ambient Hilbert space, yielding a transparent monotonicity property in which stronger bending produces a larger second-order covariance contribution. Third, the split into intrinsic and extrinsic contributions clarifies the mechanism of the correction by separating non-flattenability of the Fisher geometry from bending induced by the immersion into the canonical \(L^2\) space.

In this sense, the curvature terms are not merely a reformulation of higher-order derivatives. They isolate the canonical invariant content of the third-order expansion that survives reparameterization and carries a natural geometric sign structure.

\section{Extension to Singular Models}\label{sec:singular_models}

Extending the preceding curvature-based second-order covariance analysis from regular to singular statistical models requires a fundamental re-examination of the geometric and analytic structures underlying parametric inference.

\subsection{Geometric and analytic structure of regular and singular statistical models}

In the regular setting, one begins with a parametric statistical model
\[
\mathcal{M} = \{ p_\theta : \theta \in \Theta \subset \mathbb{R}^d \}
\]
where $\Theta$ is an open subset of $\mathbb{R}^d$ and the map
\[
\Phi : \Theta \to L^2(\mathcal{X}, \mu), \qquad \Phi(\theta) = \sqrt{p_\theta}
\]
is assumed to be a smooth embedding. That is, $\Phi$ is injective and its differential $D\Phi(\theta)$ has full rank $d$ for all $\theta$ in a neighborhood of $\theta_0$. The tangent map is explicitly given by
\[
\frac{\partial}{\partial \theta_i} \sqrt{p_\theta}(x)
=
\frac{1}{2} p_\theta(x)^{-1/2} \, \partial_i p_\theta(x)
\]
and hence the induced inner product on $T_\theta \Theta$ is
\[
g_{ij}(\theta)
= \left\langle \frac{\partial}{\partial \theta_i} \sqrt{p_\theta}, \frac{\partial}{\partial \theta_j} \sqrt{p_\theta} \right\rangle_{L^2}
=
\int \frac{1}{4} p_\theta(x)^{-1} \partial_i p_\theta(x) \partial_j p_\theta(x) \, d\mu(x)
=
\frac{1}{4} I_{ij}(\theta).
\]

\begin{remark}
The pullback metric of the Hellinger immersion is \(g_{ij}=\frac14 I_{ij}\). In Sections~\ref{stat_manifold}--\ref{curvature_corrected_CRLB}, we adopted the rescaled convention \(g_{ij}=I_{ij}=4\langle \partial_i\psi_\theta,\partial_j\psi_\theta\rangle_{L^2}\) to simplify the relationship between the metric and the Fisher information. Here, in the singular context, it is sometimes more natural to work with the raw pullback \(\frac14 I_{ij}\). Since a constant rescaling does not affect the Levi--Civita connection or the \((1,3)\)-curvature tensor, the qualitative conclusions are unaffected, and only the numerical coefficients in the Gauss equation and the scalar curvature are rescaled. We will continue to write \(g_{ij}=\frac14 I_{ij}\) in this section, noting that the Gauss equation becomes \(R_{ijkl}=\langle \mathrm{II}_{ik},\mathrm{II}_{jl}\rangle - \langle \mathrm{II}_{il},\mathrm{II}_{jk}\rangle\) (without the factor of \(4\) used in the earlier convention).
\end{remark}

With this convention, up to a constant factor the Fisher information matrix defines the Riemannian metric tensor. The assumption of strict positive definiteness,
\[
\det I(\theta_0) > 0,
\]
implies the existence of $\lambda_{\min} > 0$ such that the quadratic form
\[
Q(v) = v^\top I(\theta_0) v \geq \lambda_{\min} |v|^2 \quad \forall v \in \mathbb{R}^d,
\]
ensuring that the metric is non-degenerate and that $\Theta$ is locally diffeomorphic to $\mathbb{R}^d$. Consequently, the Levi--Civita connection is well-defined via
\[
\Gamma^k_{ij}(\theta)
=
\frac{1}{2} I^{k\ell}(\theta)
\left(
\partial_i I_{j\ell}(\theta)
+
\partial_j I_{i\ell}(\theta)
-
\partial_\ell I_{ij}(\theta)
\right)
\]
and curvature tensors may be constructed in the standard manner.

\begin{proposition}[Quadratic approximation of the KL divergence in the regular case]
\label{prop:KL_regular}
Under the regularity assumptions of Section~\ref{stat_manifold}, the Kullback--Leibler divergence admits the expansion
\[
K(\theta_0 + h) = \frac{1}{2} h^\top I(\theta_0) h + o(|h|^2)
\]
as $h \to 0$.
\end{proposition}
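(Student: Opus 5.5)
We will expand $K(\theta_0+h)=\int p_{\theta_0}\log\bigl(p_{\theta_0}/p_{\theta_0+h}\bigr)\,d\mu$ as a function of $h$ by Taylor's theorem at $h=0$, using the $C^3$ regularity of $\theta\mapsto p_\theta(x)$ and the differentiation-under-the-integral hypothesis (5) of Section~\ref{stat_manifold}. Set $f(h)=K(\theta_0+h)=\E_{\theta_0}[\log p_{\theta_0}(X)-\log p_{\theta_0+h}(X)]$. Then $f(0)=0$, and $f\ge 0$ by Gibbs' inequality, so $h=0$ is a minimum. We will compute the gradient and Hessian of $f$ at $0$ and show that the Taylor remainder is $o(|h|^2)$.

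\emph{First derivative.} Differentiating under the integral gives $\partial_i f(0)=-\E_{\theta_0}[\partial_i\log p_{\theta_0}(X)]=-\int\partial_i p_{\theta_0}\,d\mu=-\partial_i\int p_\theta\,d\mu\big|_{\theta_0}=0$. This is the same computation as the orthogonality $\langle\psi_\theta,\partial_i\psi_\theta\rangle_{L^2}=0$ in Section~\ref{stat_manifold}.

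\emph{Second derivative.} We have $\partial_i\partial_j f(0)=-\E_{\theta_0}[\partial_i\partial_j\log p_{\theta_0}(X)]$, and by the identity recorded in Section~\ref{second_order_geometry} this equals $\E_{\theta_0}[s_is_j]=I_{ij}(\theta_0)$. That identity again uses $\int\partial_i\partial_j p_\theta\,d\mu=0$.

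\emph{Remainder.} Taylor's theorem with integral remainder gives
\[
f(h)=\tfrac12 h^\top I(\theta_0)h+\int_0^1(1-t)\,h^\top\bigl(\nabla^2 f(th)-\nabla^2 f(0)\bigr)h\,dt .
\]
It therefore suffices to show that $\nabla^2 f$ is continuous at $0$. Writing $\nabla^2 f(h)=-\int p_{\theta_0}\,\nabla^2\log p_{\theta_0+h}\,d\mu$, continuity follows by dominated convergence. An alternative route is Hellinger-based: express $\partial_i\partial_j\log p_\theta=2\,\partial_i\partial_j\psi_\theta/\psi_\theta-2\,\partial_i\psi_\theta\,\partial_j\psi_\theta/\psi_\theta^2$ and use the $L^2$-continuity of $\theta\mapsto\partial^\alpha\psi_\theta$ (assumption (4)) together with Cauchy--Schwarz.

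\textbf{Main obstacle.} The hard step is this continuity. It requires an integrable local envelope for $p_{\theta_0}\,|\partial^2\log p_{\theta_0+h}|$, but the ratio $p_{\theta_0}/p_{\theta_0+h}$ is not controlled by the $L^2$ hypotheses alone. I would first read hypothesis (5) as supplying exactly such a dominating function, which is the standard regularity convention. Failing that, I would add a local uniform-integrability assumption of this kind. With continuity in hand, the remainder is bounded by $\tfrac12|h|^2\sup_{|u|\le|h|}\|\nabla^2 f(u)-I(\theta_0)\|=o(|h|^2)$, which proves the proposition.
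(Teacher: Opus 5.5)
Your proposal is correct and follows essentially the same route as the paper: a second-order Taylor expansion in $h$, where the linear term vanishes because the score has mean zero and the quadratic term becomes $I(\theta_0)$ via $\E_{\theta_0}[\partial_i\partial_j\log p_{\theta_0}]=-I_{ij}(\theta_0)$. The only difference is that the paper expands $\log p_{\theta_0+h}(x)$ pointwise and integrates the $o(|h|^2)$ remainder without comment, whereas you expand the integrated function $f$ and make explicit the local domination (continuity of $\nabla^2 f$ at $0$) that both arguments actually require.
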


\begin{proof}
Write
\[
K(\theta_0 + h)
=
\int p_{\theta_0}(x)
\log \frac{p_{\theta_0}(x)}{p_{\theta_0 + h}(x)} \, d\mu(x).
\]
Expanding $\log p_{\theta_0 + h}(x)$ in $h$,
\[
\log p_{\theta_0 + h}(x)
=
\log p_{\theta_0}(x)
+
h_i \partial_i \log p_{\theta_0}(x)
+
\frac{1}{2} h_i h_j \partial_i \partial_j \log p_{\theta_0}(x)
+
o(|h|^2).
\]
Substituting and using the standard identities (from differentiation under the integral sign)
\[
\int p_{\theta_0}(x) \partial_i \log p_{\theta_0}(x) \, d\mu(x) = 0,
\]
\[
\int p_{\theta_0}(x) \partial_i \partial_j \log p_{\theta_0}(x) \, d\mu(x)
= -I_{ij}(\theta_0),
\]
one obtains
\[
K(\theta_0 + h) = \frac{1}{2} h^\top I(\theta_0) h + o(|h|^2). \qedhere
\]
\end{proof}

In contrast, in singular models, one assumes the existence of $\theta_0$ such that
\[
\mathrm{rank}\, I(\theta_0) = r < d.
\]

\begin{proposition}[Null directions of singular Fisher information]
\label{prop:null_directions}
If $\mathrm{rank}\,I(\theta_0) = r < d$, then the nullspace
\[
\mathcal{N} = \{ v \in \mathbb{R}^d : I(\theta_0) v = 0 \},
\]
has dimension $d-r>0$, and for every $v \in \mathcal{N}$,
\[
\sum_i v_i \partial_i \log p_{\theta_0}(x) = 0 \quad \text{for } \mu\text{-almost every } x.
\]
In particular, along the curve $\gamma(t) = \theta_0 + tv$, the density satisfies $\frac{d}{dt}p_{\gamma(t)}(x)\big|_{t=0} = 0$ for $\mu$-a.e.\ $x$.
\end{proposition}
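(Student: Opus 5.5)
The plan is to prove the pointwise identity first and then deduce the statement about densities. Since \(I(\theta_0)\) is a symmetric \(d\times d\) matrix of rank \(r\), rank--nullity gives \(\dim\mathcal N=d-r>0\) at once. For the main claim, fix \(v\in\mathcal N\) and consider the directional score
\[
s_v(x)=\sum_i v_i\,\partial_i\log p_{\theta_0}(x).
\]
Using the definition of the Fisher information as the second moment of the score (Section~\ref{stat_manifold}), we obtain
\[
0=v^\top I(\theta_0)v=\int s_v(x)^2\,p_{\theta_0}(x)\,d\mu(x).
\]
The integrand is nonnegative. Hence \(s_v(x)^2p_{\theta_0}(x)=0\) for \(\mu\)-a.e.\ \(x\).

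The next step removes the density factor. The standing assumption of Section~\ref{stat_manifold} is that \(p_{\theta}(x)>0\) for all \(\theta\) and \(\mu\)-a.e.\ \(x\). So we may divide by \(p_{\theta_0}(x)\) off a \(\mu\)-null set, which gives \(s_v(x)=0\) \(\mu\)-a.e. One caveat: the strict-positivity assumption was stated alongside positive definiteness of \(I\), but here we only use the positivity and differentiability hypotheses, not the positive definiteness. If one wanted to drop positivity, the conclusion would hold only \(p_{\theta_0}\,d\mu\)-a.e. I expect this bookkeeping about which hypotheses survive in the singular setting to be the only real subtlety, and I would state it explicitly.

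For the final claim, set \(\gamma(t)=\theta_0+tv\), which lies in \(\Theta\) for small \(|t|\) since \(\Theta\) is open. By the \(C^3\) (hence \(C^1\)) dependence of \(\theta\mapsto p_\theta(x)\) for a.e.\ \(x\), the chain rule gives
\[
\frac{d}{dt}p_{\gamma(t)}(x)\Big|_{t=0}=\sum_i v_i\,\partial_ip_{\theta_0}(x)=p_{\theta_0}(x)\,s_v(x),
\]
which vanishes \(\mu\)-a.e. by the previous step. An equivalent route, useful for the later geometry, goes through the Hellinger map. Since \(v^\top I v=4\|\sum_i v_i\partial_i\psi_{\theta_0}\|_{L^2}^2\), the relation \(v\in\mathcal N\) says exactly that \(D\Psi(\theta_0)v=0\) in \(L^2(\mu)\). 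This is the loss of rank of the immersion, and it yields the same conclusion after multiplying by \(2\psi_{\theta_0}\).
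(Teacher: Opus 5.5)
Your proof is correct and follows the paper's argument essentially verbatim. You write $0=v^\top I(\theta_0)v=\int s_v^2\,p_{\theta_0}\,d\mu$, use nonnegativity of the integrand together with the a.e.\ positivity of $p_{\theta_0}$ to get $s_v=0$ $\mu$-a.e., and multiply by $p_{\theta_0}$ to conclude that $\frac{d}{dt}p_{\gamma(t)}(x)\big|_{t=0}=0$; your explicit rank--nullity step for $\dim\mathcal N=d-r$ (which the paper leaves implicit) and the equivalent Hellinger-map formulation $D\Psi(\theta_0)v=0$ are harmless additions.
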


\begin{proof}
For any $v \in \mathcal{N}$,
\[
0 = v^\top I(\theta_0) v
= \int \left( \sum_i v_i \partial_i \log p_{\theta_0}(x) \right)^2 p_{\theta_0}(x) \, d\mu(x).
\]
Since the integrand is non-negative and $p_{\theta_0}>0$ a.e., we conclude $\sum_i v_i \partial_i \log p_{\theta_0}(x) = 0$ for $\mu$-a.e.\ $x$. Multiplying by $p_{\theta_0}(x)$ gives $\frac{d}{dt}p_{\gamma(t)}(x)|_{t=0} = \sum_i v_i \partial_i p_{\theta_0}(x) = 0$ a.e.
\end{proof}

This establishes the non-injectivity of the parametrization along null directions. Higher-order derivatives may also vanish, leading to
$p_{\gamma(t)}(x) = p_{\theta_0}(x) + O(t^k)$ for $k \geq 2$, and the Kullback--Leibler divergence satisfies a higher-order expansion where the quadratic term vanishes along directions in $\mathcal{N}$.
Consequently, the local geometry cannot be described by a non-degenerate quadratic form, and the parameter space acquires the structure of a real-analytic variety with singularities. The study of such structures necessitates the use of tools from real algebraic geometry, including resolution of singularities.

\subsection{Resolution of singularities and monomialization of the KL divergence}

To systematically extract the local analytic structure of the Kullback--Leibler divergence near a singular point $\theta_0 \in \Theta$, one invokes Hironaka's resolution of singularities theorem in the real-analytic category. This guarantees the existence of a proper real-analytic mapping
\[
\pi : \widetilde{\Theta} \to \Theta,
\]
such that $\widetilde{\Theta}$ is a smooth manifold and $\pi$ is obtained as a finite composition of blow-up maps along smooth centers
\[
\pi = \pi_1 \circ \pi_2 \circ \cdots \circ \pi_N.
\]

\begin{remark}[Normal crossing forms]
\label{rem:normal_crossing}
In full generality, Hironaka's theorem produces a \emph{multiplicative} normal crossing form $K(\pi(u)) = \prod_{j=1}^r u_j^{2k_j} \cdot \varphi_K(u)$, where $\varphi_K$ is smooth and positive near $u=0$. For expository clarity and explicit computability, we work throughout this section under the assumption that the resolved coordinates can be chosen so that the KL divergence admits an \emph{additive} normal crossing representation
\[
K(\pi(u)) = \sum_{j=1}^r c_j u_j^{2k_j}, \qquad c_j > 0, \quad k_j \in \mathbb{N}.
\]
This additive form arises when the singularity decouples along coordinate axes in the resolved space, as occurs in many concrete examples such as certain mixture models and rank-deficient models with independent degenerate directions. The qualitative features of the theory, such as the role of the RLCT, the modified convergence rates, and the curvature extensions, all hold under the general multiplicative form as well (see \cite{Watanabe2009} for the general theory).
\end{remark}

Under the additive assumption of Remark~\ref{rem:normal_crossing}, we proceed with explicit computations.

\begin{proposition}[Jacobian of the resolution map]
\label{prop:jacobian_resolution}
For a single blow-up of the form $x_1 = u_1,\; x_2 = u_1 u_2,\; \ldots,\; x_s = u_1 u_s$, the Jacobian determinant is
\[
\det D\pi = u_1^{s-1}.
\]
After composing $N$ such blow-ups, the full Jacobian factorizes as
\[
|\det D\pi(u)| = \prod_{j=1}^r |u_j|^{h_j} \cdot \varphi(u),
\]
where $h_j \in \mathbb{Z}_{\geq 0}$ are accumulated from successive blow-ups and $\varphi(u)$ is smooth with $\varphi(0) \neq 0$.
\end{proposition}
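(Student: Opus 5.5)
The single blow-up is a direct Jacobian computation. In the chart $x_1=u_1$, $x_m=u_1u_m$ for $2\le m\le s$, I will write out the matrix $D\pi$. Its first column is $(1,u_2,\dots,u_s)^\top$, because $\partial x_m/\partial u_1=u_m$. For $m\ge 2$, the column indexed by $u_m$ has the single nonzero entry $u_1$ in row $m$. Any coordinates not involved in the blow-up are mapped identically and contribute identity blocks. The matrix is therefore lower triangular with diagonal entries $(1,u_1,\dots,u_1)$, which gives $\det D\pi=u_1^{s-1}$ at once. The other charts of the same blow-up are obtained by permuting which coordinate plays the role of $u_1$, and the same computation applies.

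For the composition $\pi=\pi_1\circ\cdots\circ\pi_N$, I will use the chain rule:
\[
\det D\pi(u)=\prod_{m=1}^N \det D\pi_m\bigl((\pi_{m+1}\circ\cdots\circ\pi_N)(u)\bigr).
\]
I then argue by induction on $N$. The induction claim is that, on the final chart, each factor $\det D\pi_m$ pulled back to $u$ is a monomial in the final coordinates times a smooth nonvanishing function.

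The base case is the computation above. For the inductive step, the monomial $w_1^{s-1}$ produced by $\pi_m$ is expressed in its own chart coordinates $w$. Pulled back through the later blow-ups, the coordinate $w_1$ must itself be a monomial times a unit in $u$. This holds provided each later center is chosen compatibly with the existing exceptional divisors, meaning it has normal crossings with them. That compatibility is exactly what Hironaka's theorem supplies, and it is the structure behind the normal crossing form of Remark~\ref{rem:normal_crossing}. Multiplying the pulled-back monomials together gives $\prod_j |u_j|^{h_j}$ with $h_j\in\mathbb Z_{\ge 0}$ obtained by summing exponents. The smooth positive factor $\varphi(u)$ collects the units, so $\varphi(0)\ne 0$.

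The main obstacle is this compatibility in the inductive step. For a general composition of blow-ups, a previously created factor $w_1$ need not pull back to a monomial times a unit. So I will not claim the result for arbitrary compositions. Instead I will invoke the simultaneous normal crossing conclusion of the resolution theorem, as in \cite{Watanabe2009}: there are local coordinates in which both $K\circ\pi$ and $|\det D\pi|$ are monomials times units. The explicit chain-rule computation above then serves to identify the exponents $h_j$ concretely.
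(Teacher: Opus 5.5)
Your single blow-up computation is the same as the paper's. The two arguments differ at the composition step.

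The paper says the general case ``follows by the multiplicativity of determinants under composition and induction.'' This tacitly assumes that each earlier factor $w_1^{s-1}$ pulls back to a monomial times a unit in the final chart. You correctly isolate this as the real issue, and you discharge it by citing the simultaneous normal-crossing conclusion of the resolution theorem. Your caution is warranted. In $\mathbb{R}^3$, after the point blow-up $x=(u_1,u_1u_2,u_1u_3)$ (Jacobian $u_1^2$), blow up the smooth curve $\{u_1-u_2^2=u_3=0\}$, which is tangent to $\{u_1=0\}$. The chart $(u_1,u_2,u_3)=(v_1+v_2^2,\,v_2,\,v_1v_3)$ has Jacobian $v_1$. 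So the composite Jacobian $(v_1+v_2^2)^2v_1$ is not a monomial times a unit near $v=0$.

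On the other hand, you are over-cautious relative to the literal statement. Suppose every blow-up is exactly of the displayed form in the coordinates of the previous chart. Then the composite is a monomial map $x_i=\prod_j u_j^{a_{ij}}$. From $\partial x_i/\partial u_j=a_{ij}x_i/u_j$ one gets $\det D\pi=\det(a_{ij})\prod_j u_j^{\sum_i a_{ij}-1}$ with $\det(a_{ij})=\pm1$. So the paper's induction closes with no appeal to Hironaka.

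The paper's route is therefore self-contained but covers only this literal case. Yours covers the resolutions that actually arise, where centers are coordinate subspaces only after a nonlinear change of coordinates, at the price of making that step a citation. Adding the monomial-map remark would give you both.

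One caveat applies to either route. Hironaka supplies the multiplicative normal-crossing form. Having a monomial Jacobian in the same chart as the \emph{additive} form used in Proposition~\ref{prop:RLCT_additive} therefore remains an assumption, not a consequence.
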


\begin{proof}
For the single blow-up, the Jacobian matrix is
\[
D\pi =
\begin{pmatrix}
1 & 0 & \cdots & 0 \\
u_2 & u_1 & \cdots & 0 \\
\vdots & \vdots & \ddots & \vdots \\
u_s & 0 & \cdots & u_1
\end{pmatrix},
\]
which is block-triangular with diagonal entries $1, u_1, \ldots, u_1$. Hence $\det D\pi = u_1^{s-1}$. The general statement follows by the multiplicativity of determinants under composition and induction on the number of blow-ups.
\end{proof}

\begin{proposition}[Hessian and metric structure on the resolved manifold]
\label{prop:hessian_resolved}
Under the additive assumption, the Hessian of $K(\pi(u))$ is diagonal
\[
G_{ij}(u) := \frac{\partial^2}{\partial u_i \partial u_j} K(\pi(u))
=
\delta_{ij} \cdot 2k_j (2k_j - 1) c_j u_j^{2k_j - 2}.
\]
In particular
\begin{enumerate}
\item If $k_j = 1$, then $G_{jj}(u) = 2c_j$ is constant and non-degenerate.
\item If $k_j \geq 2$, then $G_{jj}(u) \to 0$ as $u_j \to 0$, so the metric degenerates along $\{u_j = 0\}$.
\end{enumerate}
\end{proposition}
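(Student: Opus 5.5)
The computation is direct, since under the additive normal crossing assumption of Remark~\ref{rem:normal_crossing} the pulled-back divergence is the separable sum
\[
F(u):=K(\pi(u))=\sum_{j=1}^r c_j u_j^{2k_j},
\]
each summand depending on a single resolved coordinate. I will differentiate $F$ termwise. This is legitimate because $F$ is a finite sum of monomials and hence real-analytic. The first partials are
\[
\partial_{u_i}F(u)=2k_i c_i u_i^{2k_i-1}
\]
for $i\le r$, and they vanish for any remaining coordinates $u_i$ with $i>r$, on which $F$ does not depend.

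Differentiating once more, $\partial_{u_j}\partial_{u_i}F$ vanishes whenever $i\neq j$, because $\partial_{u_i}F$ depends only on $u_i$. For $i=j$ it equals $2k_j(2k_j-1)c_ju_j^{2k_j-2}$. This gives the displayed formula $G_{ij}(u)=\delta_{ij}\,2k_j(2k_j-1)c_ju_j^{2k_j-2}$. For the coordinates $j>r$ the diagonal entry is zero, and I will state that convention explicitly, reading the formula with $c_j=0$ there. The point of the proof is that additivity kills all mixed partials; no further geometric input from the blow-up structure of Proposition~\ref{prop:jacobian_resolution} is needed.

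For item (1), setting $k_j=1$ gives $2\cdot1\cdot c_j\,u_j^{0}=2c_j$. This is constant and strictly positive since $c_j>0$, so the $j$-th diagonal entry is non-degenerate at every $u$, including $u_j=0$. For item (2), when $k_j\ge2$ the exponent $2k_j-2$ is at least $2$. The coefficient $2k_j(2k_j-1)c_j$ is positive and finite, so $G_{jj}(u)\to0$ as $u_j\to0$, and $G_{jj}=0$ exactly on the hypersurface $\{u_j=0\}$. Hence $G$ loses rank along $\{u_j=0\}$. The metric is also positive semidefinite everywhere, since even powers make each diagonal entry nonnegative.

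The only mild obstacle is interpretive rather than computational. The statement calls $G$ the ``metric'' on the resolved space, whereas the Hessian of $K$ at a minimum coincides with the pulled-back Fisher information only at points where $K=0$. I would add a one-line remark that $G$ is used here as the Hessian form of $F$, consistent with Proposition~\ref{prop:KL_regular}, which identifies the Hessian of $K$ with $I$ at the true parameter. No identity with $\pi^*I$ is claimed away from the zero set. With that caveat recorded, the proof is complete.
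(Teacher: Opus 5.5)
Your proof is correct and is essentially the paper's own argument: additivity of $K(\pi(u))=\sum_{j=1}^r c_j u_j^{2k_j}$ kills all mixed partials, and the diagonal entries and both degeneracy claims are read off directly from the exponents. Your two extra remarks are accurate refinements the paper leaves implicit: any coordinates $u_j$ with $j>r$ get zero diagonal entries, and away from the zero set $G$ is the Hessian of $K\circ\pi$ rather than the pulled-back Fisher metric.
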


\begin{proof}
Since $K(\pi(u)) = \sum_{j=1}^r c_j u_j^{2k_j}$ is a sum of univariate terms, the mixed partial derivatives vanish: $\frac{\partial^2}{\partial u_i \partial u_j} K(\pi(u)) = 0$ for $i \neq j$. The diagonal terms are
$\frac{\partial^2}{\partial u_j^2}(c_j u_j^{2k_j}) = 2k_j(2k_j-1)c_j u_j^{2k_j-2}$.
The degeneracy claims follow immediately from the exponents.
\end{proof}

The resolved space therefore admits a stratification
\[
\widetilde{\Theta} = \bigsqcup_{I \subset \{1,\dots,r\}} S_I,
\qquad
S_I := \{ u \in \widetilde{\Theta} : u_j = 0 \text{ for } j \in I, \; u_j \neq 0 \text{ for } j \notin I \},
\]
where each stratum $S_I$ is a smooth manifold with effective metric rank $|\{j \notin I : k_j = 1\}|$. This stratified structure reflects the intrinsic anisotropy of the statistical model.

\subsection{Differential geometric structure on the resolved manifold}

On the regular part
\[
\widetilde{\Theta}_{\mathrm{reg}} := \widetilde{\Theta} \setminus \bigcup_{j=1}^r \{u_j = 0\},
\]
the pullback Fisher-type metric tensor $\widetilde{g}_{ij}(u) := G_{ij}(u)$ is smooth and non-degenerate, hence defines a Riemannian metric.

\begin{proposition}[Curvature of the resolved metric]
\label{prop:curvature_resolved}
On $\widetilde{\Theta}_{\mathrm{reg}}$, the Levi--Civita connection of $\widetilde{g}$ has Christoffel symbols
\[
\widetilde{\Gamma}^m_{ij}
=
\frac{1}{2} \widetilde{g}^{mk}
\left(
\partial_i \widetilde{g}_{jk}
+
\partial_j \widetilde{g}_{ik}
-
\partial_k \widetilde{g}_{ij}
\right).
\]
Since $\widetilde{g}_{ij}$ is diagonal with $\widetilde{g}_{jj}(u) = 2k_j(2k_j-1)c_j u_j^{2k_j-2}$, the Christoffel symbols simplify to
\[
\widetilde{\Gamma}^j_{jj} = \frac{k_j - 1}{u_j}, \qquad
\widetilde{\Gamma}^j_{ij} = 0 \text{ for } i \neq j, \qquad
\widetilde{\Gamma}^i_{jj} = 0 \text{ for } i \neq j.
\]
The Riemann curvature tensor is
\[
\widetilde{R}^m_{\,\,ijk}
=
\partial_j \widetilde{\Gamma}^m_{ik}
-
\partial_k \widetilde{\Gamma}^m_{ij}
+
\widetilde{\Gamma}^m_{j\ell} \widetilde{\Gamma}^\ell_{ik}
-
\widetilde{\Gamma}^m_{k\ell} \widetilde{\Gamma}^\ell_{ij}.
\]
The Ricci curvature is $\widetilde{R}_{ik} = \widetilde{R}^m_{\,\,imk}$ and the scalar curvature is $\widetilde{R} = \widetilde{g}^{ik} \widetilde{R}_{ik}$.
\end{proposition}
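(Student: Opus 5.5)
The plan is a direct coordinate computation that exploits two features of the resolved metric under the additive assumption of Remark~\ref{rem:normal_crossing}: $\widetilde g$ is diagonal, and each diagonal entry $\widetilde g_{jj}(u)=2k_j(2k_j-1)c_j u_j^{2k_j-2}$ depends on the single variable $u_j$ (Proposition~\ref{prop:hessian_resolved}). I would work on $\widetilde{\Theta}_{\mathrm{reg}}$ in the $r$ resolved coordinates $u_1,\dots,u_r$. There every $\widetilde g_{jj}$ is strictly positive and smooth, so $\widetilde g$ is a genuine Riemannian metric. Existence and uniqueness of the Levi--Civita connection, and the Christoffel formula displayed in the statement, then follow from the standard Koszul argument, exactly as recalled in Section~\ref{curv_manifold}. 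If $r<d$, I would regard $\widetilde g$ as a metric on the $r$-dimensional factor in which $K\circ\pi$ actually varies, since the Hessian has no further nonzero entries.

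\textbf{Step 1 (Christoffel symbols).} Because $\widetilde g$ is diagonal, $\widetilde g^{mk}=\delta^{mk}/\widetilde g_{mm}$, which gives
\[
\widetilde\Gamma^m_{ij}=\frac{1}{2\widetilde g_{mm}}\bigl(\partial_i\widetilde g_{jm}+\partial_j\widetilde g_{im}-\partial_m\widetilde g_{ij}\bigr).
\]
The only nonvanishing derivatives of metric components are $\partial_j\widetilde g_{jj}$. A short case check ($i=j=m$; exactly two of the three indices equal; all distinct) shows the bracket is nonzero only when $i=j=m$. For the mixed cases, such as $\widetilde\Gamma^i_{jj}$ with $i\neq j$, the derivative that would normally survive is $\partial_i\widetilde g_{jj}$, and it is zero because $\widetilde g_{jj}$ does not depend on $u_i$. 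In the case $i=j=m$ one gets
\[
\widetilde\Gamma^j_{jj}=\tfrac12\,\partial_j\log\widetilde g_{jj}=\tfrac12\cdot\frac{2k_j-2}{u_j}=\frac{k_j-1}{u_j}.
\]
This vanishes when $k_j=1$, consistent with item (1) of Proposition~\ref{prop:hessian_resolved}.

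\textbf{Step 2 (curvature).} The displayed Riemann, Ricci and scalar formulas are the definitions, matching the index convention of Section~\ref{curv_manifold}. I would then evaluate them using Step 1. Each term $\partial_j\widetilde\Gamma^m_{ik}$ is nonzero only if $m=i=k=j$, and it cancels against the $\partial_k\widetilde\Gamma^m_{ij}$ term with the same indices. Each quadratic term $\widetilde\Gamma^m_{j\ell}\widetilde\Gamma^\ell_{ik}$ forces $m=j=\ell=i=k$, and it cancels against its partner in the same way. Hence $\widetilde R^m{}_{ijk}\equiv 0$, and so the Ricci and scalar curvatures vanish. Geometrically, the substitution $v_j=\int\sqrt{\widetilde g_{jj}}\,du_j\propto u_j^{k_j}$ on each connected component of $\widetilde\Theta_{\mathrm{reg}}$ turns $\widetilde g$ into a constant diagonal metric, which gives an independent check. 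I would record this flatness as a remark: all nontrivial geometry is concentrated at the exceptional divisors $\{u_j=0\}$, where $\widetilde\Gamma^j_{jj}$ blows up when $k_j\ge2$.

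\textbf{Main obstacle.} The computation itself is mechanical, so the main difficulty is bookkeeping. I must make sure the index placement in the curvature formula agrees with the paper's convention, and I must justify restricting to the $r$ nondegenerate coordinates so that $\widetilde g^{-1}$ exists. Both are handled by working strictly on $\widetilde\Theta_{\mathrm{reg}}$ with the $r\times r$ diagonal block.
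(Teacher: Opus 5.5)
Your proposal is correct and follows the paper's own argument. You compute the Christoffel symbols directly from the diagonal metric whose entries $\widetilde g_{jj}$ depend only on $u_j$, which gives $\widetilde\Gamma^j_{jj}=\tfrac12\partial_j\log\widetilde g_{jj}=(k_j-1)/u_j$ with all other symbols zero, and you take the Riemann, Ricci and scalar formulas as the standard definitions. Two things in your write-up go beyond what the paper records, and both are correct. First, your Step~2 conclusion that $\widetilde R^m{}_{ijk}\equiv 0$ on $\widetilde\Theta_{\mathrm{reg}}$ holds, and the isometry $v_j=\sqrt{2k_j(2k_j-1)c_j}\,u_j^{k_j}/k_j$ onto a Euclidean metric confirms it. Second, you are right that $\widetilde\Gamma^j_{ij}=0$ for $i\neq j$ needs $\widetilde g_{jj}$ to depend only on $u_j$, not merely $\widetilde g$ to be diagonal; the paper leaves this implicit.
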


\begin{proof}
The diagonal structure of $\widetilde{g}$ implies that the only nonzero first derivatives are $\partial_j \widetilde{g}_{jj} = 2k_j(2k_j-1)(2k_j-2)c_j u_j^{2k_j-3}$. Substituting into the Christoffel symbol formula and using $\widetilde{g}^{jj} = 1/\widetilde{g}_{jj}$, one obtains $\widetilde{\Gamma}^j_{jj} = \frac{1}{2}\widetilde{g}^{jj}\partial_j\widetilde{g}_{jj} = (k_j-1)/u_j$. All mixed Christoffel symbols vanish because $\widetilde{g}$ is diagonal and its off-diagonal derivatives are zero. The Riemann tensor formula follows from the standard definition.
\end{proof}

For the extrinsic geometry, the embedding $\Phi(u) = \sqrt{p_{\pi(u)}} \in L^2(\mathcal{X},\mu)$ provides a second fundamental form
\[
\widetilde{\mathrm{II}}_{ij}
=
\Pi^\perp \left( \partial_i \partial_j \Phi(u) \right),
\]
and the Gauss and Codazzi equations hold on $\widetilde{\Theta}_{\mathrm{reg}}$
\[
\widetilde{R}_{ijkl}
=
\langle \widetilde{\mathrm{II}}_{ik}, \widetilde{\mathrm{II}}_{jl} \rangle
-
\langle \widetilde{\mathrm{II}}_{il}, \widetilde{\mathrm{II}}_{jk} \rangle,
\qquad
\widetilde{\nabla}_i \widetilde{\mathrm{II}}_{jk}
=
\widetilde{\nabla}_j \widetilde{\mathrm{II}}_{ik}.
\]

\begin{proof}[Verification of the Gauss equation]
Since the ambient space $L^2(\mu)$ is flat, the Gauss equation for an isometric immersion of a Riemannian manifold into a flat ambient space takes the standard form. The tangent vectors $\partial_i \Phi(u)$ span the tangent space, the second fundamental form $\widetilde{\mathrm{II}}_{ij}$ is the normal component of $\partial_i\partial_j\Phi$, and the equation follows from the decomposition of the ambient curvature (which is zero) into tangential and normal components via the Gauss--Codazzi formalism.
\end{proof}

\subsection{The Real Log Canonical Threshold (RLCT)}

The asymptotic behavior of statistical estimators in singular models is fundamentally governed by the real log canonical threshold.

\begin{definition}[RLCT via the zeta function]
\label{def:RLCT}
Let $K(\theta) \geq 0$ be a real-analytic function with $K(\theta_0) = 0$, and let $\varphi(\theta) > 0$ be a smooth prior density. The \emph{zeta function} of the pair $(K, \varphi)$ is
\[
\zeta(z) = \int_\Theta K(\theta)^z \varphi(\theta)\, d\theta,
\]
defined initially for $\operatorname{Re}(z) > 0$ (where the integral converges) and extended by analytic continuation to a meromorphic function on $\mathbb{C}$. The \emph{real log canonical threshold} (RLCT) is the smallest positive pole of $\zeta(z)$, or equivalently, the unique $\lambda > 0$ such that $\zeta(z)$ has a pole at $z = -\lambda$ and is holomorphic on $\{-\lambda < \operatorname{Re}(z)\}$.
\end{definition}

\begin{remark}
The existence of the meromorphic continuation and the rationality of the poles follow from Hironaka's resolution of singularities combined with Bernstein--Sato theory (see Atiyah~\cite{Watanabe2009} and references therein). Under the additive normal crossing assumption of Remark~\ref{rem:normal_crossing}, the RLCT and the Laplace asymptotics can be computed by direct calculation, as we now show.
\end{remark}

\begin{proposition}[RLCT under the additive assumption]
\label{prop:RLCT_additive}
Under the additive normal crossing representation $K(\pi(u)) = \sum_{j=1}^r c_j u_j^{2k_j}$ with Jacobian $|\det D\pi(u)| = \prod_{j=1}^r |u_j|^{h_j} \cdot \varphi(u)$, the stochastic complexity integral
\[
Z_n := \int_{\Theta} \exp\big(-n K(\theta)\big) \, \varphi(\theta)\, d\theta,
\]
satisfies
\[
Z_n \sim C \, n^{-\lambda},
\qquad
\lambda = \sum_{j=1}^r \frac{h_j + 1}{2k_j},
\]
as $n \to \infty$, where $C > 0$ is a computable constant.
\end{proposition}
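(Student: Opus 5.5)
We change variables through the resolution map $\pi$, localize to a neighbourhood of the origin in the resolved chart, and then evaluate a product of one-dimensional Laplace-type integrals explicitly. We treat the neighbourhood of a single point of $K^{-1}(0)$ and assume, as in Remark~\ref{rem:normal_crossing}, that $\widetilde\Theta$ is covered near $\pi^{-1}(\theta_0)$ by one chart in which both the additive form and the Jacobian factorization hold. With several charts we take a partition of unity and keep the terms with the largest $\lambda$.

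\emph{Step 1: reduction to the resolved chart.} Since $\pi$ is proper and a diffeomorphism off a measure-zero set, we have
\[
Z_n=\int_{\widetilde\Theta}\exp\bigl(-nK(\pi(u))\bigr)\,\varphi(\pi(u))\,|\det D\pi(u)|\,du .
\]
On the complement of a neighbourhood $V$ of the zero set, $K\ge\varepsilon>0$, so that part contributes $O(e^{-n\varepsilon})$. This is negligible against any power of $n$. On $V$ we insert Proposition~\ref{prop:jacobian_resolution} and the additive form. The integrand then becomes
\[
\prod_{j=1}^r e^{-nc_ju_j^{2k_j}}|u_j|^{h_j}\cdot\Phi(u),
\]
where $\Phi(u)=\varphi(\pi(u))\varphi(u)$ is smooth with $\Phi(0)>0$. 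The remaining $d-r$ coordinates do not appear in $K$, so they are integrated over a compact box and only affect the constant.

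\emph{Step 2: scaling.} In each $u_j$ we substitute $u_j=n^{-1/(2k_j)}t_j$. Then $du_j\,|u_j|^{h_j}=n^{-(h_j+1)/(2k_j)}|t_j|^{h_j}\,dt_j$, and the exponent becomes $c_jt_j^{2k_j}$, independent of $n$. Taking the product over $j$ gives the factor $n^{-\lambda}$ with $\lambda=\sum_j(h_j+1)/(2k_j)$. What remains is
\[
\int\prod_j e^{-c_jt_j^{2k_j}}|t_j|^{h_j}\,\Phi\bigl(n^{-1/(2k)}t,u'\bigr)\,dt\,du'
\]
over the rescaled box.

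\emph{Step 3: limit.} By dominated convergence, with dominating function $\sup|\Phi|\prod_j e^{-c_jt_j^{2k_j}}|t_j|^{h_j}$, which is integrable, the box grows to $\mathbb R^r$ and $\Phi\to\Phi(0,u')$. The limit constant is
\[
C=\int\Phi(0,u')\,du'\cdot\prod_j\frac{\Gamma\bigl(\tfrac{h_j+1}{2k_j}\bigr)}{k_j\,c_j^{(h_j+1)/(2k_j)}} .
\]
This is positive because $\Phi(0,\cdot)>0$ near the origin. The Gamma factor comes from $\int_{\mathbb R}e^{-ct^{2k}}|t|^h\,dt$.

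\emph{Main obstacle.} The delicate point is Step 1, not the computation. We must justify that a single resolved chart carrying both the additive form and the monomial Jacobian in the \emph{same} coordinates $u_1,\dots,u_r$ covers the relevant region. We must also justify that the prior pulled back to that chart is positive at the origin, so that $C>0$. I would state this as part of the standing additive assumption. If several charts or points of $K^{-1}(0)$ are present, the same argument gives $Z_n\sim C\,n^{-\lambda_{\min}}$, where $\lambda_{\min}$ is the minimum over charts and $C$ sums the constants of the charts attaining it; this is consistent with the RLCT of Definition~\ref{def:RLCT}. One further check is that the absolute values $|u_j|^{h_j}$ allow each $u_j$ to range over both signs, which only doubles the one-dimensional integrals and is already absorbed in the constant.
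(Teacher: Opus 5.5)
Your proposal is correct and follows essentially the same route as the paper: pull back through $\pi$, factorize the exponential, and rescale $t_j=n^{1/(2k_j)}u_j$; your dominated-convergence step makes the paper's heuristic replacement $\psi(u)\approx\psi(0)$ rigorous, and you handle explicitly the $d-r$ coordinates absent from $K$, which the paper omits. One slip: in your multi-chart remark the dominant charts are those with the \emph{smallest} $\lambda$ (slowest decay of $n^{-\lambda}$), not the largest, as you yourself state correctly at the end.
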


\begin{proof}
Under the change of variables $\theta = \pi(u)$, we have
\[
Z_n
=
\int_{\widetilde{\Theta}} \exp\left(-n \sum_{j=1}^r c_j u_j^{2k_j}\right)
\left( \prod_{j=1}^r |u_j|^{h_j} \right) \psi(u)\, du,
\]
where $\psi(u) = \varphi(\pi(u))\varphi(u)$ is smooth and positive with $\psi(0) > 0$. Since the dominant contribution arises from a neighborhood of $u=0$, we approximate $\psi(u) \approx \psi(0)$ and note that the exponential of the sum factorizes
\[
Z_n \sim \psi(0) \prod_{j=1}^r \underbrace{\int_{-\varepsilon}^{\varepsilon} \exp(-n c_j u_j^{2k_j}) |u_j|^{h_j} \, du_j}_{=: I_j(n)}.
\]
For each factor, the scaling substitution $t_j = n^{1/(2k_j)} u_j$ gives
\[
I_j(n)
=
n^{-\frac{h_j+1}{2k_j}} \int_{-n^{1/(2k_j)}\varepsilon}^{n^{1/(2k_j)}\varepsilon} \exp(-c_j t_j^{2k_j}) |t_j|^{h_j} \, dt_j
\longrightarrow
n^{-\frac{h_j+1}{2k_j}} A_j,
\]
where
\[
A_j := \int_{-\infty}^{\infty} \exp(-c_j t^{2k_j}) |t|^{h_j}\, dt < \infty.
\]
(The finiteness of $A_j$ follows because $\exp(-c_j t^{2k_j})$ decays super-polynomially.) Multiplying the factors yields
\[
Z_n \sim \psi(0) \left(\prod_{j=1}^r A_j\right) n^{-\sum_{j=1}^r \frac{h_j+1}{2k_j}} = C \, n^{-\lambda},
\]
with $\lambda = \sum_{j=1}^r \frac{h_j+1}{2k_j}$.
\end{proof}

\begin{remark}[Comparison with Watanabe's general theory]
\label{rem:watanabe_comparison}
In Watanabe's general framework using the multiplicative normal crossing form $K(\pi(u)) = \prod_{j} u_j^{2k_j}\cdot\varphi_K(u)$, the RLCT takes the value $\lambda = \min_j \frac{h_j+1}{2k_j}$, with possible logarithmic corrections $Z_n \sim C\,n^{-\lambda}(\log n)^{m-1}$ when the minimum is attained by $m > 1$ indices. The difference between $\min$ and $\sum$ reflects the different local structure of the singularity. In the multiplicative case, $K$ vanishes on coordinate hyperplanes (not just at the origin), leading to qualitatively different asymptotics. The qualitative conclusions that convergence rates are governed by algebraic invariants $(k_j, h_j)$ rather than the parameter dimension $d$ hold in both cases.
\end{remark}

\subsection{Asymptotic posterior mean squared error}

\begin{proposition}[Posterior MSE rate]
\label{prop:posterior_MSE}
Under the additive assumption with prior $\varphi(\theta) > 0$, the Bayesian posterior mean squared error satisfies
\[
\mathbb{E}_{\pi_n} \big[ \|\theta - \theta_0\|^2 \big]
\sim
C \, n^{-\min_j \frac{1}{k_j}},
\]
where $\pi_n(\theta) = \frac{\exp(-nK(\theta))\varphi(\theta)}{Z_n}$ is the posterior distribution.
\end{proposition}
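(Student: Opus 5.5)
The plan is to compute the posterior second moment as a ratio of two Laplace-type integrals in the resolved coordinates, and to estimate each factor with the scaling argument from the proof of Proposition~\ref{prop:RLCT_additive}. Write
\[
\mathbb{E}_{\pi_n}\big[\|\theta-\theta_0\|^2\big]=\frac{N_n}{Z_n},\qquad
N_n:=\int_\Theta \|\theta-\theta_0\|^2 e^{-nK(\theta)}\varphi(\theta)\,d\theta .
\]
Proposition~\ref{prop:RLCT_additive} already gives \(Z_n\sim C\,n^{-\lambda}\). It therefore suffices to show that \(N_n\sim C'\,n^{-\lambda-\min_j 1/k_j}\).

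\textbf{Step 1: localization.} Outside a small neighbourhood of \(\theta_0\) (respectively of \(u=0\) in \(\widetilde\Theta\)), \(K\) is bounded below by some \(\eta>0\). There the integrands of \(N_n\) and \(Z_n\) are \(O(e^{-n\eta})\), which is negligible against any power of \(n\). We may therefore restrict both integrals to a box \(\{|u_j|<\varepsilon\}\) and pull back by \(\theta=\pi(u)\). Using the Jacobian factorization of Proposition~\ref{prop:jacobian_resolution}, the weight becomes \(\prod_j|u_j|^{h_j}\psi(u)\) with \(\psi(0)>0\).

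\textbf{Step 2: control of the loss in resolved coordinates.} This is the step I expect to be the real obstacle. The proposition as stated implicitly requires that \(\rho(u):=\|\pi(u)-\theta_0\|^2\) behaves like \(\sum_j a_j u_j^2\) near \(u=0\), with \(a_j>0\) at least for the indices attaining \(\max_j k_j\). I would make this an explicit nondegeneracy hypothesis: \(\pi(u)-\theta_0=Lu+O(|u|^2)\) with \(L\) injective, and \(r=d\) locally, so that \(K\) confines every coordinate. This gives the two-sided bound
\[
c\sum_j u_j^2\le\rho(u)\le C\sum_j u_j^2 .
\]
I would also remark that for genuine blow-up charts such as \(x_2=u_1u_2\) only a one-sided bound holds. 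In that case the exponent can change, and the argument needs the finer leading-monomial structure of \(\rho\).

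\textbf{Step 3: factorized scaling.} Under this hypothesis, \(N_n\) is, up to constants and \((1+o(1))\) factors, the sum over \(m\) of
\[
\psi(0)\prod_{j\ne m}I_j(n)\cdot\int_{-\varepsilon}^{\varepsilon}u_m^2\,e^{-nc_m u_m^{2k_m}}|u_m|^{h_m}\,du_m .
\]
Here \(I_j(n)\) is the factor from the proof of Proposition~\ref{prop:RLCT_additive}. Substituting \(t=n^{1/(2k_m)}u_m\) shows that the last integral equals \(n^{-(h_m+3)/(2k_m)}A_m'\), with \(A_m'=\int t^2 e^{-c_m t^{2k_m}}|t|^{h_m}\,dt<\infty\). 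Dividing by \(Z_n\), the \(m\)-th term therefore contributes \((A_m'/A_m)\,n^{-1/k_m}\).

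The sum is dominated by the indices with the largest \(k_m\), which gives the rate \(n^{-\min_j 1/k_j}\). The constant is
\[
C=\sum_{m:\,k_m=\max_j k_j} a_m A_m'/A_m .
\]
The replacement \(\psi(u)\to\psi(0)\) and \(\rho\to\sum_j a_ju_j^2\) costs only \(o(1)\) relative error. This follows by dominated convergence in the rescaled variables, since the rescaled box expands to \(\mathbb{R}^r\) and the integrands have super-polynomial decay.
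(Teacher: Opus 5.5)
Your proposal is correct and follows the paper's own argument: the ratio $N_n/Z_n$, the factorized rescaling $t=n^{1/(2k_m)}u_m$, and dominance of the indices with largest $k_m$. Your explicit nondegeneracy hypothesis also repairs a step the paper passes over, since its assertion $b_j=\|a_j\|^2>0$ already fails in the blow-up chart $x_1=u_1$, $x_2=u_1u_2$.

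Two refinements are worth making.

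First, the off-diagonal terms $(L^{\top}L)_{j\ell}u_ju_\ell$ of $\rho$ are not negligible pointwise. They disappear only after integration, by parity of the leading weight in each $u_j$, as the paper notes.

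Second, $L$ injective with $r=d$ forces $\det D\pi(0)\neq 0$, hence all $h_j=0$, so your version covers only the case where $\pi$ is a local diffeomorphism. It is enough to keep $r=d$ and assume $(L^{\top}L)_{mm}>0$ for some $m$ with $k_m=\max_j k_j$. Your dominated-convergence step uses only the upper bound $\rho\le C\sum_j u_j^2$, so this weaker assumption suffices and allows $h_j>0$.
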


\begin{proof}
Substituting $\theta = \pi(u)$ and using $\pi(u) - \theta_0 = \sum_{j=1}^r a_j u_j + O(\|u\|^2)$ for vectors $a_j \in \mathbb{R}^d$, we have
\[
\|\pi(u) - \theta_0\|^2 = \sum_{j=1}^r b_j u_j^2 + \sum_{j \neq \ell} b_{j\ell} u_j u_\ell + O(\|u\|^3),
\]
where $b_j = \|a_j\|^2 > 0$ and $b_{j\ell} = \langle a_j, a_\ell \rangle$. By symmetry of the integrand, the cross terms $u_j u_\ell$ (with $j \neq \ell$) vanish upon integration against the even function $\exp(-n \sum c_j u_j^{2k_j})$. Hence the numerator
\[
\int \|\theta - \theta_0\|^2 \exp(-nK(\theta))\varphi(\theta)\,d\theta
\sim
\sum_{j=1}^r b_j \underbrace{\int u_j^2 \exp(-nc_j u_j^{2k_j}) |u_j|^{h_j}\,du_j}_{=:I_j'(n)} \cdot \prod_{\ell \neq j} I_\ell(n).
\]
The scaling $t_j = n^{1/(2k_j)}u_j$ gives
\[
I_j'(n) = n^{-\frac{h_j+3}{2k_j}} B_j,
\qquad
B_j := \int t^2 \exp(-c_j t^{2k_j})|t|^{h_j}\,dt < \infty.
\]
Since $\frac{h_j+3}{2k_j} = \frac{h_j+1}{2k_j} + \frac{1}{k_j}$, the $j$-th term in the numerator scales as
\[
n^{-\frac{h_j+3}{2k_j}} \cdot \prod_{\ell \neq j} n^{-\frac{h_\ell+1}{2k_\ell}} = n^{-\lambda - \frac{1}{k_j}}.
\]
The dominant term corresponds to the smallest additional decay, i.e., $\min_j \frac{1}{k_j}$, giving
\[
\text{Numerator} \sim C_1 \, n^{-\lambda - \min_j \frac{1}{k_j}}.
\]
Dividing by $Z_n \sim C_0 \, n^{-\lambda}$ from Proposition~\ref{prop:RLCT_additive}, we obtain
\[
\mathbb{E}_{\pi_n}[\|\theta - \theta_0\|^2] \sim C \, n^{-\min_j \frac{1}{k_j}}.
\]
In the regular case $k_j = 1$ for all $j$, this gives $n^{-1}$. In the singular case with $k_j \geq 2$ for some $j$, the rate is $n^{-\min_j 1/k_j} \gg n^{-1}$.
\end{proof}

\subsection{Tangent cone geometry and construction of the effective information metric}

To extend the preceding geometric covariance analysis to the singular setting, one must replace the classical notion of a tangent space by the \emph{tangent cone} at the singular point \(\theta_0 \in \Theta\).

\begin{definition}[Tangent cone]
Let $K(\theta)$ be real-analytic with $K(\theta_0)=0$ and let $2k$ be the order of the leading nonvanishing term in its Taylor expansion. Define the homogeneous polynomial
\[
\Phi(v) := \sum_{|\alpha| = 2k} \frac{1}{\alpha!} \partial^\alpha K(\theta_0) v^\alpha.
\]
The \emph{tangent cone} at $\theta_0$ is
\[
\mathcal{T}_{\theta_0}
=
\left\{ v \in \mathbb{R}^d : K(\theta_0 + tv) = O(t^{2k}) \text{ as } t \to 0 \right\}.
\]
\end{definition}

\begin{proposition}[Properties of the tangent cone]
$\mathcal{T}_{\theta_0}$ is a closed cone, i.e., $v \in \mathcal{T}_{\theta_0}$ and $\lambda \geq 0$ imply $\lambda v \in \mathcal{T}_{\theta_0}$. The leading-order asymptotic behavior of $K$ along rays is $K(\theta_0 + tv) \sim t^{2k}\Phi(v)$ as $t \to 0$.
\end{proposition}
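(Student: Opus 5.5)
The statement has two parts. The cone property should follow from a scaling argument applied to the Taylor expansion of \(K\) at \(\theta_0\). The asymptotic along rays should then be read off from the lowest-order homogeneous piece \(\Phi\).

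\emph{Setup.} Since \(K\) is real-analytic near \(\theta_0\) with \(K(\theta_0)=0\), I will write its convergent Taylor expansion as \(K(\theta_0+h)=\sum_{m\ge 2k}\Phi_m(h)\). Here \(\Phi_m\) is the homogeneous polynomial of degree \(m\) collecting the \(|\alpha|=m\) terms, and by definition of \(2k\) the first nonzero piece is \(\Phi_{2k}=\Phi\). It is worth recording that \(2k\) is even: \(K\ge 0\) forces the leading homogeneous part to be nonnegative, so it cannot have odd degree. This parity is not needed for the argument below.

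\emph{Asymptotic along rays.} Fix \(v\) and substitute \(h=tv\). Homogeneity gives \(K(\theta_0+tv)=t^{2k}\Phi(v)+\sum_{m>2k}t^m\Phi_m(v)\). For \(|t|\) small, analyticity (absolute convergence on a polydisc) bounds the tail by \(C_v|t|^{2k+1}\). Hence \(K(\theta_0+tv)=t^{2k}\Phi(v)+O(t^{2k+1})\), which gives \(K(\theta_0+tv)\sim t^{2k}\Phi(v)\) whenever \(\Phi(v)\neq 0\). When \(\Phi(v)=0\), the same display gives \(K(\theta_0+tv)=o(t^{2k})\), and I will interpret the asymptotic in this degenerate case in that sense. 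This shows, in particular, that the \(O(t^{2k})\) condition in the definition holds for every \(v\).

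\emph{Cone property and closedness.} If \(v\in\mathcal T_{\theta_0}\) and \(\lambda\ge0\), then \(K(\theta_0+t\lambda v)=K(\theta_0+(\lambda t)v)=O((\lambda t)^{2k})=O(t^{2k})\), so \(\lambda v\in\mathcal T_{\theta_0}\); for \(\lambda=0\) the claim is trivial. For closedness I will use the uniform form of the previous step. On the compact set \(\{|v|\le R\}\) the tail estimate holds with a constant uniform in \(v\), so \(|K(\theta_0+tv)-t^{2k}\Phi(v)|\le C_R|t|^{2k+1}\). This makes \(\mathcal T_{\theta_0}\) all of \(\mathbb R^d\), which is trivially closed. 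If one prefers the sharper reading of the cone as \(\{\Phi=0\}\) (directions along which \(K=o(t^{2k})\)), closedness follows instead from continuity of the polynomial \(\Phi\), and homogeneity of \(\Phi\) gives the cone property again.

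\emph{Main obstacle.} The real difficulty is not technical but interpretive. As literally defined, the tangent cone is the whole space, so the statement is essentially vacuous unless one adopts the sharper reading in which the cone is the zero set of \(\Phi\). I will present both readings, with the uniform remainder estimate as the single analytic input they share.
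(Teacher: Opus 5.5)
Your proof is correct, and for the cone property it uses the paper's argument: rewrite $K(\theta_0+t(\lambda v))=K(\theta_0+(\lambda t)v)$ and rescale.

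Where the two differ is in what gets proved. The paper's proof is only that line, written as $K(\theta_0+(\lambda t)v)\sim(\lambda t)^{2k}\Phi(v)$. This has three consequences:
\begin{itemize}
\item it invokes the ray asymptotic, the proposition's second claim, without ever deriving it;
\item the $\sim$ step degenerates when $\Phi(v)=0$ or $\lambda=0$;
\item closedness is asserted but never checked.
\end{itemize}

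Your version avoids all three. You use the membership condition $K(\theta_0+tv)=O(t^{2k})$ directly, which sidesteps the degenerate cases. You derive the ray asymptotic from the convergent Taylor series with an explicit $O(t^{2k+1})$ remainder, and you correctly restrict $\sim$ to $\Phi(v)\neq 0$. You also treat closedness.

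Your observation that the literal definition gives $\mathcal{T}_{\theta_0}=\mathbb{R}^d$ is right, and the paper does not notice it. The alternative reading $\{\Phi=0\}$ is the one under which the cone statement has content, and your continuity-plus-homogeneity argument covers it.

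The uniform-in-$v$ remainder bound is unnecessary, since the pointwise bound already yields $\mathcal{T}_{\theta_0}=\mathbb{R}^d$.
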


\begin{proof}
$K(\theta_0 + t(\lambda v)) = K(\theta_0 + (\lambda t) v) \sim (\lambda t)^{2k}\Phi(v) = t^{2k}\lambda^{2k}\Phi(v) = O(t^{2k})$, so $\lambda v \in \mathcal{T}_{\theta_0}$.
\end{proof}

To extract a bilinear structure, define the generalized metric
\[
\mathcal{G}(v,w)
:=
\lim_{t \to 0} \frac{1}{t^{2k}} \frac{1}{2} \left[ K(\theta_0 + t(v+w)) - K(\theta_0 + tv) - K(\theta_0 + tw) \right]
=
\frac{1}{2}\left[\Phi(v+w) - \Phi(v) - \Phi(w)\right].
\]
When $k=1$, $\mathcal{G}$ is a symmetric bilinear form recovering the Fisher information inner product. When $k \geq 2$, $\mathcal{G}(v,w)$ is symmetric but not bilinear. Nevertheless, it captures the leading-order distinguishability and satisfies $\mathcal{G}(v,v) = \Phi(v)$.

\subsection{Pushforward of curvature tensors under resolution of singularities}

Curvature operators on the original parameter space are defined by pushing forward the tensors $\widetilde{R}$ and $\widetilde{\mathrm{II}}$ from $\widetilde{\Theta}$ to $\Theta$ via the resolution map $\pi$.

\begin{definition}[Pushed-forward curvature]
On the regular part where $D\pi$ has maximal rank, introduce a right-inverse $(D\pi)^{\dagger}$ satisfying
$(D\pi)^\alpha_{\, i} (D\pi)^{\dagger\, i}_{\ \ \beta} = \delta^\alpha_\beta$. The pushed-forward curvature tensor and extrinsic contraction are
\[
\mathcal{R}^\alpha_{\ \beta\gamma\delta}(\theta)
=
(D\pi)^\alpha_{\, m}
\, \widetilde{R}^m_{\ \, ijk}(u)
\, (D\pi)^{\dagger\, i}_{\ \ \beta}
\, (D\pi)^{\dagger\, j}_{\ \ \gamma}
\, (D\pi)^{\dagger\, k}_{\ \ \delta},
\]
\[
\mathcal{S}_{\alpha\beta}(\theta)
=
(D\pi)^{\dagger\, i}_{\ \ \alpha}
\, (D\pi)^{\dagger\, j}_{\ \ \beta}
\, \langle \widetilde{\mathrm{II}}_{ij}(u), \widetilde{\mathrm{II}}_{k\ell}(u) \rangle
\, \widetilde{g}^{k\ell}(u),
\]
where $u$ is any preimage of $\theta$ under $\pi$. The contracted operators are
$\mathcal{R}^\sharp_{\alpha\beta} = \mathcal{R}^\gamma_{\ \alpha\gamma\beta}$, $\mathcal{S}^\sharp_{\alpha\beta} = \mathcal{S}_{\alpha\beta}$.
\end{definition}

\begin{proposition}[Generalized asymptotic covariance expansion]
\label{prop:singular_covariance}
Under the additive resolution and the standing assumptions, the asymptotic covariance of the estimator takes the form
\[
\mathrm{Cov}_{\theta_0}(\hat{\theta}_n)
\sim
n^{-2\lambda}
\left[
\mathcal{G}^{-1}
+
n^{-\mu}
\left(
\frac{1}{2} \mathcal{R}^\sharp + \mathcal{S}^\sharp
\right)
\right],
\]
where $\mu = \min_j \frac{1}{k_j}$ arises from the next-order scaling.
\end{proposition}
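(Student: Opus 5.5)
The plan is to transport the regular-theory argument of Theorem~\ref{thm:main} to the resolved manifold $\widetilde{\Theta}$ and then push the result down to $\Theta$ via $\pi$. First, I would work in the resolved coordinates $u$ where $K(\pi(u))=\sum_j c_j u_j^{2k_j}$. I would rescale each coordinate by its natural rate, $t_j=n^{1/(2k_j)}u_j$, exactly as in the proofs of Propositions~\ref{prop:RLCT_additive} and~\ref{prop:posterior_MSE}. In these local coordinates the rescaled log-likelihood ratio converges to a limiting process whose deterministic part is $-\sum_j c_j t_j^{2k_j}$. The leading fluctuation of the estimator in $u$-space is then of order $n^{-1/(2k_j)}$ in direction $j$.

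Pulling back through $\pi(u)-\theta_0=\sum_j a_ju_j+O(\|u\|^2)$ gives the leading covariance scale. I would identify this leading scale with $n^{-2\lambda}$ in the statement and read $\mathcal{G}^{-1}$ as the inverse of the generalized metric restricted to the directions attaining the dominant rate. I expect a genuine discrepancy with the exponent $\min_j 1/k_j$ of Proposition~\ref{prop:posterior_MSE}, so I would write this step carefully and note which normalization of $\lambda$ is needed.

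Second, on $\widetilde{\Theta}_{\mathrm{reg}}$ the metric $\widetilde g$ is a genuine Riemannian metric and $\Phi(u)=\sqrt{p_{\pi(u)}}$ is an immersion. I would therefore apply the stochastic-expansion argument of Steps 1--3 of Theorem~\ref{thm:main} verbatim in $u$-coordinates. This produces a next-order term of the form $\widetilde I^{-1}(\tfrac12\widetilde R^\sharp+\widetilde S^\sharp+\widetilde D)\widetilde I^{-1}$, computed from the curvature data of Proposition~\ref{prop:curvature_resolved} together with the Gauss equation. The relative size of this correction follows from the rescaling. Each extra order of the expansion costs one power of $n^{-1/(2k_j)}$ per factor of $u_j$, so the correction is suppressed relative to the leading term by $n^{-\min_j 1/k_j}=n^{-\mu}$. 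I would then push the tensors forward with $(D\pi)^\dagger$ to obtain $\mathcal{R}^\sharp$ and $\mathcal{S}^\sharp$, using the chain rule for the covariance, $\mathrm{Cov}(\pi(\hat u))\approx D\pi\,\mathrm{Cov}(\hat u)\,D\pi^\top$.

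Finally, I would dispose of the discrepancy term $\widetilde D$. Either it is absorbed into the standing assumptions, or the statement should be read modulo it, since only the geometric part is displayed.

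The main obstacle is the second step. The regular theorem needs a quadratic (locally asymptotically normal) expansion, but for $k_j\ge2$ the limit is non-Gaussian, so the Wick contractions of Lemma~\ref{lem:joint_clt_wick} fail. In addition, $\widetilde g$ degenerates at $u=0$, which is exactly where the estimator concentrates. My first attempt would be to work on a shrinking annulus $|u_j|\asymp n^{-1/(2k_j)}$, where the metric is nondegenerate after rescaling, and to treat the resulting expression as a leading-order heuristic, justified by uniform integrability.
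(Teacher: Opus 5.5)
Your route is the same as the paper's: resolve, rescale $u_j$ by $n^{1/(2k_j)}$, push the leading covariance through $D\pi$, obtain the correction from curvature data on $\widetilde\Theta$, and push it forward. The paper's proof is only an outline that asserts the two steps you flag. Those steps cannot be completed, so the proposal does not prove the display.

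\emph{Leading term.} Under the linearization $\pi(u)-\theta_0=\sum_j a_ju_j+O(\|u\|^2)$ that you use (as does Proposition~\ref{prop:posterior_MSE}), your rescaling gives $\operatorname{Cov}(\hat\theta_n)\asymp n^{-\min_j 1/k_j}=n^{-\mu}$. This rate is set by the largest $k_j$ and does not involve the $h_j$, whereas $2\lambda=\sum_j(h_j+1)/k_j$ involves all of them. No normalization consistent with Proposition~\ref{prop:RLCT_additive} reconciles the two. For $k_j=1$, $h_j=0$ one has $2\lambda=d$, so the display predicts order $n^{-d}$ instead of $n^{-1}I(\theta_0)^{-1}$; the agreement asserted in Proposition~\ref{prop:regular_recovery} therefore fails for $d\ge2$. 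The RLCT governs the normalizer $Z_n$, not the spread of $\hat\theta_n$, so ``identify this scale with $n^{-2\lambda}$'' is exactly where the argument breaks. Two further problems:
\begin{itemize}
\item $\mathcal G^{-1}$ is undefined at a singular point. $\mathcal G$ is either $\tfrac12 I(\theta_0)$, which is singular, or, when the leading order of $K$ is at least four, not bilinear. In a direction with $k_j\ge2$ the limiting variance of $Z_j$ is a moment of a non-Gaussian law, not an inverse form.
\item If some $h_j>0$, then $D\pi(0)$ is singular. Your linear pullback then loses directions, and $(D\pi)^\dagger$ does not exist where $\hat u_n$ concentrates.
\end{itemize}

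\emph{Next-order term.}
\begin{itemize}
\item Theorem~\ref{thm:main} cannot be run ``verbatim'' on $\widetilde\Theta_{\mathrm{reg}}$. It expands around a fixed \emph{true} parameter with positive-definite Fisher information, using $\sqrt n$-expansions, Gaussian limits and Wick contractions. Here the truth is $u=0\notin\widetilde\Theta_{\mathrm{reg}}$.
\item The shrinking annulus does not help. There $n^{-1}\widetilde g^{jj}(\widetilde\Gamma^j_{jj})^2\asymp(n\,u_j^{2k_j})^{-1}\asymp1$, so the would-be correction is as large as the leading term and the orders do not separate.
\item Even granting separation, your counting (one factor $n^{-1/(2k_j)}$ per extra $u_j$) puts two kinds of terms at relative order $n^{-\mu/2}$, not $n^{-\mu}$. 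These are the $\E[AB^\top]$-type terms and the $\tfrac12\partial^2\pi(0)[\hat u_n,\hat u_n]$ terms, which your linear chain rule discards but the paper lists as a source. Reaching $n^{-\mu}$ needs an odd-order cancellation, the role played by Assumption~10 in Section~\ref{app:standing_assumptions}, and you would have to prove it for a non-Gaussian limit.
\item The curvature input is mismatched. The metric $\widetilde g=\nabla^2(K\circ\pi)$ of Proposition~\ref{prop:curvature_resolved} is diagonal with $j$-th entry depending only on $u_j$, hence flat, so $\widetilde R\equiv0$ and $\mathcal R^\sharp\equiv0$. It also agrees with the pulled-back Fisher metric, to which $\widetilde{\mathrm{II}}$, the Gauss equation and Theorem~\ref{thm:main} refer, in general only on $\{K\circ\pi=0\}$, where it degenerates.
\item Discarding $\widetilde D$ is not a matter of reading. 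By Theorem~\ref{thm:main} the regular correction is $I^{-1}(\tfrac12R^\sharp+S^\sharp+D)I^{-1}$ with $D$ generically nonzero, so absorbing it into the hypotheses assumes the conclusion. The display also lacks the $I^{-1}(\cdot)I^{-1}$ sandwich your own output would carry.
\end{itemize}

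What is missing is a genuine second-order theory of the estimator at a non-LAN singular point. The display itself would also need correcting (leading order $n^{-\mu}$, not $n^{-2\lambda}$) before it could be proved.
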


\begin{proof}
We outline the argument. The estimator in resolved coordinates has the expansion
$\hat{u}_n^j = n^{-1/(2k_j)} Z_j + n^{-(1/(2k_j) + \mu_j)} B_j + \cdots$,
where $Z_j$ are the leading-order random variables. Transforming back via $\hat\theta_n - \theta_0 = (D\pi)(0)\hat{u}_n + \frac{1}{2}\partial^2\pi(0)[\hat{u}_n, \hat{u}_n] + \cdots$, the leading covariance is $(D\pi)\Cov(\hat{u}_n)(D\pi)^\top \sim n^{-2\lambda}\mathcal{G}^{-1}$. The next-order correction arises from the nonlinearity of $\pi$ (through $\partial^2\pi$) and the third-order log-likelihood terms, which are expressible through the curvature tensors $\widetilde{R}$ and $\widetilde{\mathrm{II}}$ via the Gauss equation. After pushforward, these yield $\frac{1}{2}\mathcal{R}^\sharp + \mathcal{S}^\sharp$ at the next order $n^{-\mu}$.
\end{proof}

\subsection{Recovery of the classical theory}

\begin{proposition}[Regular case recovery]
\label{prop:regular_recovery}
In the special case $k_j = 1$ and $h_j = 0$ for all $j = 1, \ldots, d$, the singular framework reduces exactly to the classical second-order theory.
\end{proposition}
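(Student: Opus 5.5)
The plan is to substitute $k_j=1$, $h_j=0$, $r=d$ into each ingredient of the singular framework and check, item by item, that every construction collapses to its regular counterpart from Sections~\ref{stat_manifold}--\ref{curvature_corrected_CRLB}.

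First, with $h_j=0$ and $r=d$, the Jacobian in Proposition~\ref{prop:jacobian_resolution} reduces to $|\det D\pi(u)|=\varphi(u)$ with $\varphi(0)\neq 0$. Hence $\pi$ is a local diffeomorphism at $u=0$ and may be taken to be an affine change of coordinates. Combined with the additive form $K(\pi(u))=\sum_j c_ju_j^2$, Proposition~\ref{prop:KL_regular} identifies this form with $\tfrac12 h^\top I(\theta_0)h$ in the coordinates $h=D\pi(0)u$.

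Second, I will check the invariants and rates:
\begin{itemize}
\item Proposition~\ref{prop:RLCT_additive} gives $\lambda=\sum_{j=1}^d \tfrac{1}{2}=d/2$. This recovers the BIC-type Laplace rate $Z_n\sim Cn^{-d/2}$.
\item Proposition~\ref{prop:posterior_MSE} gives the rate $n^{-\min_j 1/k_j}=n^{-1}$.
\item Proposition~\ref{prop:hessian_resolved} shows $\widetilde g_{jj}=2c_j$ is constant and nondegenerate. The stratification is then trivial, with $\widetilde\Theta_{\mathrm{reg}}$ a full neighborhood.
\item The Christoffel symbols $\widetilde\Gamma^j_{jj}=(k_j-1)/u_j$ vanish.
\item The generalized metric satisfies $\mathcal G(v,w)=\tfrac12 v^\top I(\theta_0)w$ at $k=1$. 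So $\mathcal G$ is the Fisher--Rao form, up to the factor convention discussed in the remark on $g=\tfrac14 I$, which I will track explicitly.
\end{itemize}

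Third, I will handle the curvature pushforward. Since $D\pi$ is invertible, the right inverse $(D\pi)^\dagger$ is the genuine inverse. The pushed-forward tensors $\mathcal R$ and $\mathcal S$ are then the ordinary tensorial transforms of $\widetilde R$ and $\widetilde{\mathrm{II}}$ under a diffeomorphism. By coordinate invariance, they coincide with $R^\sharp$ and $S^\sharp$ of Theorem~\ref{thm:main}.

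There is one caveat. The pullback metric used in the resolved section is the Hessian of $K$, which is constant in resolved coordinates, so $\widetilde R=0$ there. I must therefore interpret $\widetilde{\mathrm{II}}$ and $\widetilde R$ as those of the Hellinger immersion $\Phi(u)=\sqrt{p_{\pi(u)}}$ with its actual pullback metric $\tfrac14 I(\pi(u))$, not the frozen Hessian. With this reading, the Gauss equation on $\widetilde\Theta_{\mathrm{reg}}$ matches the one in Section~\ref{curv_manifold} after rescaling by $4$.

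Finally, in Proposition~\ref{prop:singular_covariance}, I set $\mu=\min_j 1/k_j=1$ and identify the prefactor. The asymptotic display has $n^{-2\lambda}$, which equals $n^{-d}$ for $d>1$ and does not match $n^{-1}$. The statement is only consistent if the leading scaling is read per coordinate as $n^{-2\cdot\frac{1}{2k_j}}=n^{-1}$, i.e.\ $\hat u^j_n=n^{-1/(2k_j)}Z_j+\cdots$ as in that proof. I will adopt this per-coordinate reading and conclude
\[
\Cov(\hat\theta_n)=n^{-1}I^{-1}+n^{-2}\big(\tfrac12R^\sharp+S^\sharp\big)+\cdots,
\]
up to the metric-inverse frame.

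I expect the main obstacle to be this last matching step. Three things must be reconciled:
\begin{itemize}
\item the normalization of the exponent $2\lambda$;
\item the missing $I^{-1}(\cdot)I^{-1}$ sandwich;
\item the absent discrepancy tensor $D$ from Theorem~\ref{thm:main}.
\end{itemize}
I would try to handle these by stating that "reduces exactly" means the structural shape and geometric terms agree, with the full $P$ of Theorem~\ref{thm:main} (including $D$) restored from the regular computation. The factor conventions ($g=I$ versus $g=\tfrac14 I$) are to be fixed by the remark on rescaling.
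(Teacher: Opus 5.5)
Your route is the paper's: substitute $k_j=1$, $h_j=0$ and check $\lambda=d/2$, $\mu=1$, the nonvanishing Jacobian, the constant resolved metric $G_{jj}=2c_j$, $Z_n\sim Cn^{-d/2}$, and $I(\theta_0)=2A^{-\top}CA^{-1}$ with $A=D\pi(0)$, $C=\mathrm{diag}(c_j)$. Those checks are sound. The genuine gap is the final matching of covariance expansions, and your diagnosis of it is correct. Proposition~\ref{prop:singular_covariance} does not specialize to Theorem~\ref{thm:main}, and the paper's proof does not close this either. It only asserts that the regular expansion ``coincides with the general singular expansion when $\lambda=d/2$ and $\mu=1$'', and concedes $D$ in a parenthesis. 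Your per-coordinate scaling is a \emph{correction} of Proposition~\ref{prop:singular_covariance}, not a reading of it. The correction is justified: that proposition's own sketch $\hat u^j_n=n^{-1/(2k_j)}Z_j+\cdots$ and Proposition~\ref{prop:posterior_MSE} both give $n^{-\min_j 1/k_j}$ rather than $n^{-2\lambda}$. Restoring $D$ ``from the regular computation'' imports the conclusion instead of deriving it. So ``reduces exactly'' cannot be proved as stated. What you can prove is the reduction of the invariants and rates, plus a structural match for an explicitly repaired singular expansion, and you should present it that way.

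Three of your intermediate claims are also wrong, and they lengthen the list of repairs. First, $\mathcal G=\tfrac12 I(\theta_0)$ gives $\mathcal G^{-1}=2I(\theta_0)^{-1}$. The $\tfrac12$ comes from $K(\theta_0+h)=\tfrac12h^\top I(\theta_0)h+\cdots$, not from the Hellinger normalization $g=\tfrac14 I$, so the rescaling remark cannot absorb it.

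Second, coordinate invariance does not make $\mathcal S$ coincide with $S^\sharp$. The paper defines $\mathcal S$ through $\langle\widetilde{\mathrm{II}}_{ij},\widetilde{\mathrm{II}}_{k\ell}\rangle\widetilde g^{k\ell}$, which pairs $\widetilde{\mathrm{II}}_{ij}$ with the mean-curvature vector. That is not the Gram contraction $g^{k\ell}\langle\mathrm{II}_{ik},\mathrm{II}_{j\ell}\rangle$, and no diffeomorphism changes which indices are contracted. Take your reading of $\widetilde g$ as the Hellinger pullback $\tfrac14\pi^*I$. Tracing the Gauss equation then shows that this pairing equals the Ricci contraction plus the Gram contraction. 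The Ricci contraction is unchanged by a constant rescaling of the metric, while the Gram contraction is multiplied by $4$ in passing from $g=I$ to $g=\tfrac14 I$. After pushforward you therefore get $\tfrac12\mathcal R^\sharp+\mathcal S=\tfrac32R^\sharp+4S^\sharp$ in the notation of Theorem~\ref{thm:main}, not $\tfrac12R^\sharp+S^\sharp$.

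Third, $\pi$ cannot be taken affine while keeping the exact form $K(\pi(u))=\sum_jc_ju_j^2$. Generically $\partial_{ijk}K(\theta_0)=-\E_{\theta_0}[\partial_{ijk}\log p_{\theta_0}(X)]\neq0$, so the normalizing coordinates are nonlinear. The term $\tfrac12\partial^2\pi(0)[\hat u_n,\hat u_n]=O_p(n^{-1})$ then contributes precisely to the $n^{-2}$ term you are trying to match.
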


\begin{proof}
When $k_j = 1$ and $h_j = 0$, the resolved KL divergence is $K(\pi(u)) = \sum_{j=1}^d c_j u_j^2$, which is a non-degenerate quadratic form. By Proposition~\ref{prop:RLCT_additive},
\[
\lambda = \sum_{j=1}^d \frac{0+1}{2 \cdot 1} = \frac{d}{2}, \qquad \mu = \min_j \frac{1}{1} = 1.
\]

Since $h_j = 0$, the Jacobian is non-vanishing $|\det D\pi(u)| = \varphi(u)$ with $\varphi(0) \neq 0$, so $\pi$ is a local diffeomorphism. The resolved metric $G_{jj} = 2c_j$ is constant and non-degenerate, so the geometry is locally Euclidean.

The integral $Z_n$ evaluates exactly
\[
Z_n \sim \varphi(0) \prod_{j=1}^d \sqrt{\frac{\pi}{nc_j}} = C n^{-d/2},
\]
recovering $\lambda = d/2$. Identifying $I(\theta_0) = 2(A^{-1})^\top C A^{-1}$ where $A = D\pi(0)$ and $C = \mathrm{diag}(c_1,\ldots,c_d)$, we recover
$K(\theta) = \frac{1}{2}(\theta-\theta_0)^\top I(\theta_0)(\theta-\theta_0) + O(\|\theta-\theta_0\|^3)$.

The covariance expansion becomes
\[
\Cov_{\theta_0}(\hat\theta_n)
=
\frac{1}{n}I(\theta_0)^{-1}
+
\frac{1}{n^2}\left(\frac{1}{2}R^\sharp + S^\sharp\right)
+o(n^{-2}),
\]
which coincides with the general singular expansion when $\lambda = d/2$ and $\mu = 1$, and matches Theorem~\ref{thm:main} in the regular case (up to the discrepancy $D$, which arises from the more detailed analysis of score moments beyond what the immersion geometry determines).
\end{proof}

\section{Conclusion and Outlook}\label{sec:conclusion}

Classical Fisher-information asymptotics are foundational in statistical estimation, but their standard form depends only on the local quadratic structure of the model. In particular, they capture only the first-order geometry of the statistical manifold through the Fisher--Rao metric and are therefore insensitive to higher-order effects such as intrinsic curvature and extrinsic bending. By realizing a regular parametric model as a Riemannian manifold \((\Theta,g)\) and immersing it into an ambient Hilbert space through the Hellinger map \(\theta\mapsto\psi_\theta=\sqrt{p_\theta}\), these higher-order geometric features can be incorporated systematically into second-order asymptotic expansions for estimation.

Under the regularity assumptions developed in Section~\ref{stat_manifold}, and under the additional moment, approximation, and replacement hypotheses used throughout the proof, we obtained for each fixed \(\theta\in\Theta_{\mathrm{reg}}\) a second-order refinement of the covariance expansion whose leading term is the classical Fisher-information contribution and whose next-order term is governed by curvature. More precisely, the covariance admits the expansion
\[
\mathrm{Cov}_\theta(\hat\theta_n)
=
\frac{1}{n} I(\theta)^{-1}
+
\frac{1}{n^2}
I(\theta)^{-1}
\,P(\theta)\,
I(\theta)^{-1}
+
o\!\left(\frac{1}{n^2}\right),
\]
where \(P(\theta)\) is the second-order correction tensor, admitting the canonical decomposition
\[
P=\frac12 R^\sharp+S^\sharp+D,
\]
with \(R^\sharp\) a Ricci-type contraction of the Riemann curvature tensor, \(S^\sharp\succeq 0\) a Gram-type contraction of the second fundamental form, and \(D\) a Hellinger discrepancy tensor capturing higher-order probabilistic content not determined solely by the immersion geometry.

This decomposition clarifies distinct mechanisms contributing to the second-order behavior of the covariance. The intrinsic term records the failure of the Fisher geometry to be flattened beyond first order. The extrinsic term measures the bending of the statistical model inside the ambient space of square-root densities. The discrepancy term captures the residual deviation between the probabilistic score-moment structure and the ambient \(L^2\)-geometry. A key structural property is that \(P_{ij}\equiv 0\) in full exponential families, where \(D\) exactly cancels the extrinsic contribution \(S^\sharp\). In nonlinear models such as mixtures, latent-variable models, or parameter spaces constrained by geometry, the correction tensor \(P\) is typically nonzero, providing a geometric explanation for finite-sample deviations from Fisher-information-only predictions.

Beyond refining first-order asymptotics, the curvature-aware viewpoint suggests a more nuanced interpretation of statistical efficiency. Estimation error is governed not only by local sensitivity encoded in \(I(\theta)\), but also by higher-order geometric structure, including non-flatness of the Fisher metric, bending of the Hellinger immersion, and near-singular directions in which distinguishability deteriorates beyond first order. In this sense, curvature provides a principled quantitative language for describing weak identifiability, higher-order inefficiency, and the breakdown of naive quadratic approximations.

These conclusions also point toward applications in modern probabilistic learning systems. In many overparameterized settings, many parameter values induce nearly indistinguishable predictive distributions and therefore achieve nearly equivalent empirical performance. The correction tensor \(P\) provides a coordinate-invariant way to distinguish between such solutions at second order, thereby suggesting curvature-aware principles for regularization, diagnostics, and optimization. We now briefly describe these directions.

\subsection{Applications to deep learning training and regularization}\label{sec:deep_learning}

In many deep learning settings, the model \(q_w(\cdot\mid x)\) defines a conditional probability distribution parameterized by a high-dimensional weight vector \(w\in\mathbb R^p\), and the loss function is the negative log-likelihood or cross-entropy
\[
\mathcal L(w)=-\frac1n\sum_{i=1}^n \log q_w(y_i\mid x_i).
\]
When the model is overparameterized, the set of near-optimal weights often forms a manifold, or near-manifold, in parameter space. The resulting question is not only how to fit the data, but also which nearly equivalent solution should be preferred.

The second-order correction tensor \(P(w)\) suggests a principled answer. Small values of \(P(w)\), or of suitable contractions of \(P(w)\), indicate that the model departs less strongly from exponential-family behavior at second order and therefore has a smaller second-order covariance correction. This motivates curvature-aware regularization strategies based on intrinsic curvature, extrinsic curvature, or the full correction tensor. For example, one may penalize the scalar Ricci-type contraction
\[
\mathcal R(w)=g^{ij}R^\sharp_{ij}(w),
\]
the total extrinsic curvature
\[
\kappa^2(w)=g^{ij}S^\sharp_{ij}(w),
\]
or more directly the trace of the covariance correction
\[
\mathrm{tr}\bigl(I(w)^{-1}P(w)I(w)^{-1}\bigr).
\]
Such penalties favor regions of parameter space in which the second-order geometric distortion is weaker.

This viewpoint also clarifies the relation with flat-minimum heuristics and sharpness-aware training. The Fisher--Rao metric already gives a coordinate-invariant first-order notion of local sensitivity, while the tensor \(P\) refines this by incorporating higher-order effects. In particular, a point where \(P(w)\) is small is one where the usual Fisher approximation remains accurate to second order, so that the model is not only locally stable at first order but also weakly curved in the higher-order sense identified by the present theory.

Beyond regularization, the geometric quantities \(R^\sharp\), \(S^\sharp\), and \(P\) may also serve as diagnostics. Large values of \(\|P\|\) relative to \(\|I^{-1}\|\) indicate that second-order effects are substantial and that first-order Fisher-information asymptotics alone may be unreliable. This suggests uses in weak-identifiability detection, in model comparison when first-order performance is similar, and in the analysis of optimization trajectories by monitoring whether training moves toward regions of lower or higher curvature.

A further natural direction concerns optimization. The natural gradient method replaces the Euclidean gradient by the Fisher-adjusted gradient \(I(w)^{-1}\nabla \mathcal L(w)\), thereby incorporating first-order information geometry into the training dynamics. The present framework suggests a refinement in which second-order geometric information enters through \(P(w)\), at least approximately, so that one replaces the purely Fisher-based preconditioner by a curvature-corrected one. Whether such corrections can be made computationally useful remains an open problem, but the formal structure of the covariance expansion points naturally in this direction.

The main practical obstacle is computational. In high-dimensional models, the relevant geometric quantities are rarely available in closed form. The Fisher metric, its derivatives, and the second derivatives of the Hellinger immersion must typically be estimated numerically, and the full Fisher--Rao curvature tensor has \(O(d^4)\) components. The theory should therefore not be interpreted as requiring exact computation of complete curvature tensors. What enters the expansion are only specific contractions of \(P\), and these can in principle be approximated without explicitly forming \(R_{ikjl}\) or \(\mathrm{II}_{ij}\). Possible strategies include Monte Carlo or minibatch estimates of score moments, structured or low-rank approximations of the Fisher information and its inverse, Fisher-vector products combined with iterative solvers, automatic differentiation for directional second- and third-order derivatives, and stochastic trace estimators for Ricci-type or trace-type contractions.

Overall, the geometry-aware framework developed here provides a coordinate-invariant and second-order accurate refinement of classical Fisher-information asymptotics. It yields a transparent decomposition of the second-order covariance term into intrinsic and extrinsic geometric contributions, and it suggests that many finite-sample phenomena traditionally viewed as analytic complications are in fact manifestations of underlying statistical curvature. A natural continuation of this program is to weaken the technical hypotheses needed for the expansion, to extend the framework beyond the score-root and first-order efficient setting, to clarify the singular counterpart more fully, and to connect curvature-aware second-order asymptotics with practical optimization, stability, and regularization principles in modern nonlinear learning systems.

\section{Appendix}\label{appendix}
\subsection{Why is an immersion sufficient?}
\label{app:immersion_vs_embedding}

In this appendix we explain why the curvature-aware Cram\'er--Rao refinements developed in the main text require only that the square-root density map
\begin{equation*}
\Psi:\Theta\to L^2(\mu),\qquad \Psi(\theta)=\psi_\theta=\sqrt{p_\theta},
\end{equation*}
be a $C^3$ immersion, rather than a global embedding. The key point is that every geometric object entering the second-order correction, namely the Fisher--Rao metric, its Levi--Civita connection, the Riemann curvature tensor, and the second fundamental form, is defined from local derivatives of $\Psi$ up to order three at a fixed parameter value $\theta$. Consequently, global injectivity of the parametrization and global topological regularity of the image are not needed for the local differential-geometric analysis underlying the bound.

Let $M$ be a smooth $d$-manifold and $H$ a possibly infinite-dimensional Hilbert space. A $C^1$ map $F:M\to H$ is an immersion if its differential $dF_\theta:T_\theta M\to H$ is injective for every $\theta\in M$. A map is an embedding if it is an immersion and a homeomorphism onto its image $F(M)$ equipped with the subspace topology.

All constructions in Sections~\ref{stat_manifold}--\ref{curvature_corrected_CRLB} are local on $\Theta$ and depend only on derivatives of $\psi_\theta$ in the ambient Hilbert space. The Fisher--Rao metric is defined by pullback, the Levi--Civita connection and Riemann curvature tensor are computed from $g$ and its derivatives, and the extrinsic geometry is defined pointwise by orthogonal projection.

For the square-root map, the immersion condition is equivalent to local identifiability in the Fisher--Rao sense:
\begin{equation*}
d\Psi_\theta \text{ injective}
\quad\Longleftrightarrow\quad
g(\theta)\succ 0.
\end{equation*}
By the local immersion theorem, every immersion is locally an embedding. Hence the model is locally identifiable near $\theta$, even if global identifiability fails elsewhere.

If $\Psi$ is not globally injective, the set-theoretic image may have self-intersections. This does not affect the curvature-corrected bounds, which are stated at a fixed parameter value and depend only on local derivatives.

\subsection{Detailed standing assumptions for the proof of the main theorem}
\label{app:standing_assumptions}

For reference, we record the full set of technical assumptions used in the proof of Theorem~\ref{thm:main}. Let \(K \subset \Theta_{\mathrm{reg}}\) be compact.

\begin{enumerate}
\item The standing \(C^3\) regularity assumptions of Section~\ref{stat_manifold} hold, including differentiation under the integral sign and smoothness of the Fisher metric on \(\Theta_{\mathrm{reg}}\).

\item Uniform moment assumptions hold on \(K\). For all relevant indices,
\[
\sup_{\vartheta\in K}\E_\vartheta\!\big[(\partial_{ij}\log p_\vartheta(X))^2\big]<\infty,
\qquad
\sup_{\vartheta\in K}\E_\vartheta\!\big[\,|\partial_{ijk}\log p_\vartheta(X)|\,\big]<\infty.
\]

\item For some \(\delta>0\), uniform \((4+\delta)\)-moment assumptions hold on \(K\). In particular,
\[
\sup_{\vartheta\in K}\E_\vartheta\!\left[\,|\partial_i\log p_\vartheta(X)|^{4+\delta}\right]<\infty,
\]
and, with \(q_{ij}(x;\vartheta):=\partial_{ij}\log p_\vartheta(x)-\Gamma_{ij}^k(\vartheta)\,\partial_k\log p_\vartheta(x)\),
\[
\sup_{\vartheta\in K}\E_\vartheta\!\left[
\bigl|q_{ij}(X;\vartheta)-\E_\vartheta[q_{ij}(X;\vartheta)]\bigr|^{4+\delta}
\right]<\infty.
\]

\item The estimator \(\hat\theta_n\) is a score-root estimator, so that \(U(\hat\theta_n)=0\).

\item The estimator \(\hat\theta_n\) admits a second-order stochastic expansion uniformly for \(\theta\in K\),
\[
\Delta_n:=\hat\theta_n-\theta=\frac1{\sqrt n}A_n(\theta)+\frac1n B_n(\theta)+o_p(n^{-1}),
\]
with \(A_n(\theta)=O_p(1)\) and \(B_n(\theta)=O_p(1)\), uniformly on \(K\).

\item The bias is negligible, in the sense that \(\operatorname{Bias}_\theta(\hat\theta_n)=o(n^{-1})\) uniformly for \(\theta\in K\).

\item The score one-form admits the covariant Taylor expansion uniformly for \(\theta\in K\),
\[
0=U_i(\theta)+(\nabla^2\ell)_{ij}(\theta)\Delta_n^j+\frac12(\nabla^3\ell)_{ijk}(\theta)\Delta_n^j\Delta_n^k+r_{i,n}(\theta,\Delta_n),
\]
with \(r_{i,n}(\theta,\Delta_n)=o_p(n^{-1/2})\).

\item The stronger \(L^{4+\delta}\)-identification of \(A_n\) holds uniformly on \(K\),
\[
A_n^a(\theta)-g^{ai}(\theta)\frac{U_i(\theta)}{\sqrt n}\longrightarrow 0 \quad\text{in }L^{4+\delta}(\P_\theta),
\]
together with
\[
A_n^a(\theta)=g^{ai}(\theta)\frac{U_i(\theta)}{\sqrt n}+o_p(n^{-1/2}).
\]

\item The remainder and moment bounds satisfy
\[
\E_\theta[\|R_n(\theta)\|^2]=o(n^{-3}),
\qquad
\sup_{\theta\in K}\sup_{n\ge1}\E_\theta[\|A_n\|^4+\|B_n\|^4]<\infty.
\]

\item The mixed second-order term is negligible, namely
\[
\E_\theta[A_nB_n^\top+B_nA_n^\top]=o(n^{-1/2}).
\]

\item The replacement hypotheses for second moments hold. See the proof below for the precise statement involving \(B_{i,n}\), \(H_{ij,n}\), and \(K_{ijk,n}\).
\end{enumerate}

\subsection{Proof of the main theorem}
\label{app:proof_main}

We begin with a definition and a proposition. 
\begin{definition}[Big-$O$ in probability]\label{bigO_in_p}
Let $(X_n)_{n\ge1}$ be a sequence of real-valued random variables and let $(a_n)_{n\ge1}$ be a sequence of positive real numbers.  
We write $X_n = O_p(a_n)$ if the sequence $\bigl(X_n/a_n\bigr)_{n\ge1}$ is \emph{bounded in probability}, i.e.
\[
\forall \varepsilon>0\ \exists M<\infty \ \text{such that}\ \sup_{n\ge1}\mathbb{P}\!\left(\frac{|X_n|}{a_n}>M\right)\le \varepsilon .
\]
\end{definition}

\begin{proposition}\label{prop:score_hessian_orders}
Let \(K\subset \Theta_{\mathrm{reg}}\) be compact. Assume the regularity assumptions of Section~\ref{stat_manifold}, with \(O_p\) understood in the sense of Definition~\ref{bigO_in_p}. Assume moreover that, for all relevant indices,
\[
\sup_{\vartheta\in K}\E_\vartheta\!\big[(\partial_{ij}\log p_\vartheta(X))^2\big]<\infty,
\qquad
\sup_{\vartheta\in K}\E_\vartheta\!\big[\,|\partial_{ijk}\log p_\vartheta(X)|\,\big]<\infty.
\]
Then, for every \(\theta\in K\),
\[
\mathbb{E}_\theta[(\nabla^2\ell)_{ij}(\theta)]
=
\mathbb{E}_\theta[U_{ij}(\theta)]
=
-n\,I_{ij}(\theta)
=
-n\,g_{ij}(\theta).
\]
Moreover, uniformly for \(\theta\in K\),
\[
U_i(\theta)=O_p(\sqrt n),
\qquad
(\nabla^2\ell)_{ij}(\theta)
=
-n\,g_{ij}(\theta)+O_p(\sqrt n),
\qquad
(\nabla^3\ell)_{ijk}(\theta)=O_p(n).
\]
\end{proposition}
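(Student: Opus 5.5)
The plan is to treat the three claims separately, each resting on the i.i.d. sum structure \(\ell=\sum_{k=1}^n\log p_\theta(X_k)\) together with Chebyshev's inequality. To keep the stochastic orders uniform on \(K\), every bound will be controlled by constants of the form \(\sup_{\vartheta\in K}\E_\vartheta[\cdot]\). Continuity of \(g\), \(g^{-1}\) and \(\Gamma\) on \(\Theta_{\mathrm{reg}}\), under the standing \(C^3\) hypotheses of Section~\ref{stat_manifold}, makes these coefficients bounded on the compact set \(K\).

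\emph{Expectation identity.} The equality \(\E_\theta[U_{ij}]=-nI_{ij}\) is the Bartlett identity already derived in Section~\ref{second_order_geometry}. It follows by differentiating \(\int p_\theta\,d\mu=1\) twice under the integral sign, which assumption (5) of Section~\ref{stat_manifold} permits. For the covariant Hessian, \((\nabla^2\ell)_{ij}=U_{ij}-\Gamma^k_{ij}U_k\). Since \(\Gamma^k_{ij}(\theta)\) is deterministic and \(\E_\theta[U_k]=0\), the two expectations coincide, and \(I=g\) holds by the convention of Section~\ref{stat_manifold}.

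\emph{Orders of the score and covariant Hessian.} The score \(U_i=\sum_k s_i(X_k)\) is a sum of i.i.d. mean-zero terms with variance \(nI_{ii}(\theta)\). Chebyshev then gives \(\P(|U_i|>M\sqrt n)\le \sup_K I_{ii}/M^2\), which is uniform in \(n\) and in \(\theta\in K\). For the Hessian, write \((\nabla^2\ell)_{ij}+ng_{ij}=\sum_k\bigl(s_{ij}(X_k)+g_{ij}\bigr)-\Gamma^k_{ij}U_k\). The first sum is centred and i.i.d., with variance at most \(n\sup_K\E[(\partial_{ij}\log p)^2]\), which is finite by hypothesis. Hence that sum is \(O_p(\sqrt n)\) by Chebyshev. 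The second term is \(O_p(\sqrt n)\) because \(\Gamma\) is bounded on \(K\) and \(U_k=O_p(\sqrt n)\).

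\emph{Third covariant derivative.} This is the step needing the most care. Only a first absolute moment of \(s_{ijk}\) is assumed, so the argument uses Markov's inequality rather than a CLT. Expanding \(\nabla^3\ell\) in coordinates gives \(U_{ijk}\) plus terms of the form \(\Gamma\cdot U_{ab}\), \((\partial\Gamma+\Gamma\Gamma)\cdot U_c\). The coefficients involve first derivatives of \(\Gamma\), hence second derivatives of \(g\); these are continuous under the \(C^3\) assumptions and therefore bounded on \(K\).

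The term \(U_{ijk}=\sum_k s_{ijk}(X_k)\) satisfies \(\E|U_{ijk}|\le n\sup_K\E|\partial_{ijk}\log p|\), so Markov gives \(U_{ijk}=O_p(n)\). The remaining pieces are \(U_{ab}=O_p(n)\) and \(U_c=O_p(\sqrt n)\subset O_p(n)\), using the previous step, each multiplied by a bounded coefficient. A finite sum of \(O_p(n)\) terms is \(O_p(n)\), which completes the argument.

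The main obstacle is bookkeeping rather than any single estimate. One must check that \(\partial\Gamma\) is indeed available: it needs \(g\in C^2\), which follows from \(C^3\) smoothness of \(\psi_\theta\) and the \(L^2\)-continuity hypotheses. One must also confirm that all the Chebyshev and Markov constants depend only on the \(\sup_K\) moment bounds, so that the \(O_p\) statements hold uniformly in \(\theta\in K\).
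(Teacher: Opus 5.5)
Your proposal is correct and follows the paper's proof essentially step for step: the Bartlett identity together with $\E_\theta[U_k]=0$ for the expectations, Chebyshev for the score and the centred covariant Hessian, and Markov for $U_{ijk}$ plus bounded-coefficient lower-order terms for $\nabla^3\ell$. The only cosmetic difference is that you bound $-\Gamma^k_{ij}U_k$ separately, whereas the paper folds it into a single i.i.d. summand $q_{ij}=\partial_{ij}\log p_\theta-\Gamma^k_{ij}\partial_k\log p_\theta$ and applies Chebyshev once.
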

\begin{proof}
We write $\ell(\theta)=\sum_{t=1}^n \log p_\theta(X_t)$, $U_i=\partial_i\ell$, $U_{ij}=\partial_i\partial_j\ell$, $U_{ijk}=\partial_i\partial_j\partial_k\ell$, and $(\nabla^2\ell)_{ij}=U_{ij}-\Gamma_{ij}^kU_k$.

\textit{Expectation identities.} Since $\int p_\theta\,d\mu=1$, differentiation under the integral gives $\E_\theta[\partial_i\log p_\theta(X)]=0$ and $\E_\theta[\partial_{ij}\log p_\theta(X)]=-I_{ij}(\theta)$. Summing over $n$ i.i.d.\ observations yields $\E_\theta[U_{ij}]=-nI_{ij}$, and since $\E_\theta[U_k]=0$, also $\E_\theta[(\nabla^2\ell)_{ij}]=-ng_{ij}$.

\textit{$O_p$-bounds.} For the score, $\Var_\theta(U_i)=nI_{ii}(\theta)\le nC_i$ for $C_i:=\sup_{K}I_{ii}<\infty$. By Chebyshev, $U_i=O_p(\sqrt n)$.

For the covariant Hessian, define $q_{ij}(x;\theta):=\partial_{ij}\log p_\theta(x)-\Gamma_{ij}^k(\theta)\partial_k\log p_\theta(x)$, so $(\nabla^2\ell)_{ij}+ng_{ij}=\sum_{t=1}^n(q_{ij}(X_t;\theta)+g_{ij})$. The assumed $L^2$-bounds on $\partial_{ij}\log p_\theta$ and boundedness of $\Gamma$ on $K$ give $\sup_K\Var_\theta(q_{ij})<\infty$. By Chebyshev, $(\nabla^2\ell)_{ij}+ng_{ij}=O_p(\sqrt n)$.

For the third derivative, by Markov's inequality with the assumed first-moment bound on $|\partial_{ijk}\log p_\theta|$, $U_{ijk}=O_p(n)$. The relation $(\nabla^3\ell)_{ijk}=U_{ijk}+\text{(terms involving }\Gamma,\partial\Gamma,U_r,U_{ab}\text{)}$, where all correction terms are $O_p(n)$ or smaller, gives $(\nabla^3\ell)_{ijk}=O_p(n)$.
\end{proof}

\begin{lemma}[Joint CLT and asymptotic Wick contractions]\label{lem:joint_clt_wick}
Fix a compact set \(K\subset \Theta_{\mathrm{reg}}\) and \(\theta\in K\). Define the centered covariant-Hessian fluctuation
\[
H_{ab,n}(\theta)
:=
\frac{1}{\sqrt n}\Big((\nabla^2\ell)_{ab}(\theta)+n\,g_{ab}(\theta)\Big).
\]
Assume the $(4+\delta)$-moment bounds of Section~\ref{app:standing_assumptions} and the $L^{4+\delta}$-identification of $A_n$.

Then $(A_n(\theta),H_n(\theta))$ converges jointly in distribution to a centered Gaussian vector, and for every polynomial $P$ of total degree at most $4$,
$\E_\theta[P(A_n,H_n)] \to \E[P(\mathcal{A},\mathcal{H})]$.
In particular, the asymptotic Wick identities hold
\begin{align*}
\E_\theta[H_{ik,n}A_n^k\,H_{jm,n}A_n^m]
&=
\E_\theta[H_{ik,n}H_{jm,n}]\,\E_\theta[A_n^kA_n^m]
+\E_\theta[H_{ik,n}A_n^m]\,\E_\theta[A_n^kH_{jm,n}]
\\
&\qquad
+\E_\theta[H_{ik,n}A_n^k]\,\E_\theta[H_{jm,n}A_n^m]
+o(1),
\end{align*}
and similarly for the quartic and mixed cubic terms.
\end{lemma}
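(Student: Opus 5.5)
We reduce the statement to a multivariate CLT for i.i.d. sums, followed by a moment-convergence argument via uniform integrability. Replace $A_n$ by its linearization $\tilde A_n^a:=g^{ai}(\theta)U_i(\theta)/\sqrt n$. Then both $\tilde A_n$ and $H_n$ are normalized sums of i.i.d. centered vectors:
\[
\tilde A_n^a=\frac1{\sqrt n}\sum_{t=1}^n g^{ai}s_i(X_t),
\qquad
H_{ab,n}=\frac1{\sqrt n}\sum_{t=1}^n\big(q_{ab}(X_t;\theta)+g_{ab}\big),
\]
with $q_{ab}$ as in Section~\ref{app:standing_assumptions}. The summands have mean zero: $\E_\theta[s_i]=0$, and $\E_\theta[q_{ab}]=-g_{ab}$ by Proposition~\ref{prop:score_hessian_orders}. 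They also have finite second moments, by the $(4+\delta)$-moment hypotheses. The multivariate Lindeberg--L\'evy CLT (via Cram\'er--Wold) therefore gives $(\tilde A_n,H_n)\Rightarrow(\mathcal A,\mathcal H)$, a centered Gaussian vector. Its covariance is $\Cov(\mathcal A^a,\mathcal A^b)=g^{ab}$, $\Cov(\mathcal A^a,\mathcal H_{cd})=g^{ai}\E_\theta[s_i q_{cd}]$, and $\Cov(\mathcal H_{ab},\mathcal H_{cd})=\Cov_\theta(q_{ab},q_{cd})$. Since $A_n-\tilde A_n\to0$ in $L^{4+\delta}$, and hence in probability, Slutsky's lemma transfers the joint convergence to $(A_n,H_n)$.

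Next we upgrade convergence in distribution to convergence of moments of total degree $\le4$. It suffices to show that $\sup_n\E_\theta\big[\|(A_n,H_n)\|^{4+\delta}\big]<\infty$, since this makes every monomial of degree $\le4$ uniformly integrable, and together with weak convergence this yields convergence of expectations. For $H_n$, and likewise for $\tilde A_n$, we invoke Rosenthal's (or the Marcinkiewicz--Zygmund) inequality for i.i.d. centered sums with $p=4+\delta$:
\[
\E\Big|\tfrac1{\sqrt n}\textstyle\sum_t Y_t\Big|^{p}\le C_p\Big(\big(\E Y^2\big)^{p/2}+n^{1-p/2}\E|Y|^p\Big).
\]
The right-hand side is bounded uniformly in $n$, using the assumed $\sup_K\E|s_i|^{4+\delta}<\infty$ and the corresponding bound for $q_{ab}-\E q_{ab}$. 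For $A_n$ itself we use Minkowski's inequality, $\|A_n\|_{L^{4+\delta}}\le\|\tilde A_n\|_{L^{4+\delta}}+\|A_n-\tilde A_n\|_{L^{4+\delta}}$, where the last term tends to zero by the $L^{4+\delta}$-identification hypothesis. For mixed monomials we apply H\"older's inequality to each factor, which keeps the exponent at $(4+\delta)/4>1$ per degree-four product and so yields uniform integrability.

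Finally, the Wick identities follow by applying the moment convergence to the quartic polynomial $H_{ik}A^kH_{jm}A^m$ and to its quadratic pairings. For the Gaussian limit, Isserlis' theorem gives $\E[\mathcal H_{ik}\mathcal A^k\mathcal H_{jm}\mathcal A^m]$ as the sum of the three pairings. Each limiting pairing equals the limit of the corresponding finite-$n$ second moment, since degree-two moments also converge. Replacing limits by finite-$n$ quantities therefore costs only $o(1)$. The quartic and mixed cubic statements are handled identically; odd-order Gaussian moments vanish.

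The main obstacle is the uniform integrability step, specifically controlling $A_n$ rather than its linearization. This is exactly where the $L^{4+\delta}$-identification hypothesis is needed, since mere $o_p$ closeness would not suffice for convergence of fourth moments. If Rosenthal's inequality is unwanted, a direct expansion of the fourth moment of an i.i.d. sum, $n^{-2}\big(n\E Y^4+3n(n-1)(\E Y^2)^2\big)$, bounds $\E|\cdot|^4$ uniformly. That alone gives only boundedness at exactly degree four, so the $\delta$-margin is used through Rosenthal's inequality at order $4+\delta$.
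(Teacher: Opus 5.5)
Your proposal is correct and follows essentially the same route as the paper. You apply a Cram\'er--Wold CLT to the i.i.d.\ normalized sums $(g^{-1}U/\sqrt n, H_n)$, use Slutsky's lemma and the $L^{4+\delta}$-identification to pass to $A_n$, use Rosenthal's inequality for uniform $L^{4+\delta}$ bounds and hence uniform integrability of degree-$\le 4$ polynomials, and finish with Isserlis' theorem on the Gaussian limit. Your version additionally makes explicit what the paper leaves implicit (Minkowski for $A_n$, H\"older for mixed monomials, and replacing the limiting pairings by finite-$n$ second moments at $o(1)$ cost). You use Lindeberg--L\'evy where the paper cites Lyapunov, which makes no difference here.
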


\begin{proof}
The vector $\left(\frac{U_i(\theta)}{\sqrt n}, H_{ab,n}(\theta)\right)$ is a normalized sum of i.i.d.\ centered random vectors with finite $(4+\delta)$-moments. By the multivariate Lyapunov CLT (applied via Cram\'er--Wold), it converges jointly to a centered Gaussian. Since $A_n = g^{-1}\frac{U}{\sqrt n} + \rho_n$ with $\|\rho_n\|_{L^{4+\delta}} \to 0$, Slutsky's theorem gives joint convergence of $(A_n, H_n)$.

For moment convergence, Rosenthal's inequality gives uniform $L^{4+\delta}$-bounds on the normalized sums, hence on $(A_n, H_n)$. For any degree-$\le 4$ polynomial $P$, the family $\{P(A_n,H_n)\}$ is uniformly integrable (by de la Vall\'ee--Poussin), so distributional convergence implies convergence of expectations. The Wick identities follow from Isserlis' theorem applied to the limiting Gaussian.
\end{proof}

\begin{lemma}[Identification of \(B_n\)]
\label{lem:B_identification}
Under the perturbative score expansion and the next-order control $A_n^a=g^{ai}\frac{U_i}{\sqrt n}+o_p(n^{-1/2})$
\[
B_{i,n}(\theta)
:= g_{ij}(\theta)B_n^j(\theta)
=
H_{ij,n}(\theta)A_n^j(\theta)
+\frac12 K_{ijk,n}(\theta)A_n^j(\theta)A_n^k(\theta)
+o_p(1).
\]
\end{lemma}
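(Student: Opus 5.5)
The identification comes from substituting the stochastic expansion into the covariant Taylor expansion of the score (assumption~7 of Section~\ref{app:standing_assumptions}) and matching powers of \(n\). With \(\Delta_n=\frac1{\sqrt n}A_n+\frac1n B_n+o_p(n^{-1})\), I would write \((\nabla^2\ell)_{ij}=-ng_{ij}+\sqrt n\,H_{ij,n}\) and \((\nabla^3\ell)_{ijk}=n\,K_{ijk,n}\). By Proposition~\ref{prop:score_hessian_orders}, \(H_{ij,n}=O_p(1)\) and \(K_{ijk,n}=O_p(1)\). Substituting into
\[
0=U_i+(\nabla^2\ell)_{ij}\Delta_n^j+\tfrac12(\nabla^3\ell)_{ijk}\Delta_n^j\Delta_n^k+r_{i,n}
\]
and expanding term by term gives the following orders.
\begin{itemize}
\item The quadratic-in-\(\Delta_n\) term is \(\tfrac12 K_{ijk,n}A_n^jA_n^k+O_p(n^{-1/2})\), since \(\Delta_n^j\Delta_n^k=n^{-1}A_n^jA_n^k+O_p(n^{-3/2})\).
\item The Hessian term splits as \(-\sqrt n\,g_{ij}A_n^j\), then \(-g_{ij}B_n^j+H_{ij,n}A_n^j\), then \(O_p(n^{-1/2})\) and the \(o_p(\cdot)\) remainders.
\end{itemize}

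Next I would collect orders. At order \(\sqrt n\) the equation reads \(U_i-\sqrt n\,g_{ij}A_n^j\). The next-order control \(A_n^a=g^{ai}U_i/\sqrt n+o_p(n^{-1/2})\) gives \(\sqrt n\,g_{ij}A_n^j=U_i+o_p(1)\), so this bracket contributes only an \(o_p(1)\) quantity. What remains at order \(1\) is
\[
0=-g_{ij}B_n^j+H_{ij,n}A_n^j+\tfrac12K_{ijk,n}A_n^jA_n^k+o_p(1).
\]
Lowering the index with \(g_{ij}\), so that \(B_{i,n}=g_{ij}B_n^j\), yields the claim. The remainder \(r_{i,n}=o_p(n^{-1/2})\) is \(o_p(1)\) and can be absorbed. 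Normal coordinates are not needed here, since everything is written tensorially.

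The main obstacle is the \(o_p\) bookkeeping at the order-\(\sqrt n\) level. Multiplying \(\Delta_n\)'s \(o_p(n^{-1})\) remainder by the \(-ng_{ij}\) piece of the Hessian gives only \(o_p(1)\), which is fine. The difficulty is that the cancellation of \(U_i\) against \(\sqrt n\,g_{ij}A_n^j\) needs the discrepancy \(A_n-g^{-1}U/\sqrt n\) to be \(o_p(n^{-1/2})\), not merely \(o_p(1)\). That is exactly the stated hypothesis, and I would invoke it explicitly.

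I would also check that the \(H_{ij,n}\) cross term with \(n^{-1}B_n\) is \(O_p(n^{-1/2})\) and that \(\sqrt n\,H\cdot o_p(n^{-1})=o_p(n^{-1/2})\). Both follow from the \(O_p\) bounds of Proposition~\ref{prop:score_hessian_orders} together with \(B_n=O_p(1)\) from assumption~5.
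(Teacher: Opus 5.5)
Your proposal is correct and takes essentially the same route as the paper. Both substitute the stochastic expansion into the covariant score expansion, using $(\nabla^2\ell)_{ij}=-ng_{ij}+\sqrt n\,H_{ij,n}$ and $(\nabla^3\ell)_{ijk}=nK_{ijk,n}$, then invoke the next-order control to get $U_i-\sqrt n\,g_{ij}A_n^j=o_p(1)$ and rearrange at order one. Your explicit bookkeeping of the cross and remainder terms, via Proposition~\ref{prop:score_hessian_orders} and $B_n=O_p(1)$, just fills in what the paper's one-line proof leaves implicit.
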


\begin{proof}
Multiply the score expansion by $\sqrt n$ and substitute $U_i - \sqrt n\,g_{ij}A_n^j = o_p(1)$ (from the $A_n$ identification). The result follows by rearrangement.
\end{proof}

\begin{lemma}[From \(o_p(1)\) to \(o(1)\) in expectation]
\label{lem:op_to_o_expectation}
If $Z_n \xrightarrow{p} 0$ and $\sup_{n\ge1}\E[|Z_n|^{1+\eta}] < \infty$ for some $\eta > 0$, then $\E[|Z_n|] \to 0$.
\end{lemma}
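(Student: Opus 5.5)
The plan is to prove this as a uniform-integrability statement, using a truncation argument combined with Hölder's inequality. Fix a level $M>0$ and split
\[
\E[|Z_n|]=\E\big[|Z_n|\mathbf 1\{|Z_n|\le M\}\big]+\E\big[|Z_n|\mathbf 1\{|Z_n|>M\}\big].
\]
We control each piece separately and then let $n\to\infty$ followed by $M\to\infty$.

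\emph{First term.} For any $\varepsilon>0$ we have
\[
\E\big[|Z_n|\mathbf 1\{|Z_n|\le M\}\big]\le \varepsilon+M\,\P(|Z_n|>\varepsilon).
\]
Since $Z_n\to0$ in probability, this is at most $2\varepsilon$ for $n$ large, with $M$ held fixed. Hence its $\limsup$ is $0$. Equivalently, one can apply dominated (bounded) convergence to the bounded functional $|z|\wedge M$.

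\emph{Tail term.} Let $C:=\sup_n\E[|Z_n|^{1+\eta}]<\infty$. On the event $\{|Z_n|>M\}$ we have $|Z_n|\le |Z_n|^{1+\eta}/M^{\eta}$. This gives
\[
\E\big[|Z_n|\mathbf 1\{|Z_n|>M\}\big]\le C\,M^{-\eta}
\]
uniformly in $n$; this is the de la Vallée-Poussin criterion in its simplest form. Alternatively, Hölder with exponents $1+\eta$ and $(1+\eta)/\eta$ gives $C^{1/(1+\eta)}\,\P(|Z_n|>M)^{\eta/(1+\eta)}$, and Markov's inequality bounds $\P(|Z_n|>M)\le C/M^{1+\eta}$.

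\emph{Conclusion.} Combining the two pieces, $\limsup_n\E|Z_n|\le C M^{-\eta}$ for every $M>0$. Letting $M\to\infty$ yields the claim.

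There is no real obstacle here. The only point needing care is the order of limits: the tail bound has to hold uniformly in $n$ before sending $M\to\infty$, and that uniformity is supplied exactly by the assumed $\sup_n$ moment bound. In the application to Theorem~\ref{thm:main}, the lemma will be used with $Z_n$ equal to products of the $o_p(1)$ remainders from Lemma~\ref{lem:B_identification} with $B_n$. The needed $(1+\eta)$-moment bounds follow from the fourth-moment hypotheses of Section~\ref{app:standing_assumptions} via Cauchy--Schwarz.
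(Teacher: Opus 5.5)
Your argument is correct and is essentially the paper's proof. The paper cites the de la Vall\'ee--Poussin criterion, which is your uniform tail bound $\E[|Z_n|\mathbf 1\{|Z_n|>M\}]\le CM^{-\eta}$, and Vitali's theorem, which is your truncation at level $M$ combined with convergence in probability; you have written out both standard facts inline. One minor aside on your closing remark: the paper invokes this lemma in Corollary~\ref{cor:remainder_second_moments}, for the $A_n$-remainders $\rho_n^k=A_n^k-g^{km}U_m/\sqrt n$ multiplied by $O_p(1)$ factors, not for the $B_n$-remainders, whose replacement is covered by the assumed replacement hypotheses.
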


\begin{proof}
The $(1+\eta)$-moment bound implies uniform integrability (de la Vall\'ee--Poussin criterion). Convergence in probability plus uniform integrability implies $L^1$-convergence (Vitali's theorem).
\end{proof}

\begin{corollary}[\(o_p(1)\) remainders inside second moments]
\label{cor:remainder_second_moments}
Under the hypotheses of Lemma~\ref{lem:op_to_o_expectation} applied to the remainders $\rho_n^k := A_n^k - g^{km}\frac{U_m}{\sqrt n}$, we have
\[
\E[A_n^kA_n^\ell] = g^{km}g^{\ell r}\frac1n\E[U_mU_r] + o(1),
\qquad
\E[H_{ij,n}A_n^k] = g^{km}\frac1{\sqrt n}\E[H_{ij,n}U_m] + o(1).
\]
\end{corollary}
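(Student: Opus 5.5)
The plan is to apply Lemma~\ref{lem:op_to_o_expectation} componentwise to the products in question. First I will rewrite each product as its "ideal" version plus remainder terms, then show that each remainder product goes to zero in probability while staying bounded in $L^{1+\eta}$.

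For the first identity, set $\rho_n^k:=A_n^k-g^{km}U_m/\sqrt n$ and expand
\[
A_n^kA_n^\ell
=
g^{km}g^{\ell r}\tfrac1n U_mU_r
+\rho_n^k\,g^{\ell r}\tfrac{U_r}{\sqrt n}
+g^{km}\tfrac{U_m}{\sqrt n}\,\rho_n^\ell
+\rho_n^k\rho_n^\ell .
\]
The $L^{4+\delta}$-identification in the standing assumptions gives $\rho_n\to0$ in $L^{4+\delta}$, and in particular in probability. The normalized score $U/\sqrt n$ is a normalized i.i.d.\ sum with finite $(4+\delta)$-moments, so Rosenthal's inequality (as in the proof of Lemma~\ref{lem:joint_clt_wick}) makes it bounded in $L^{4+\delta}$ uniformly in $n$.

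By H\"older, each cross term $Z_n=\rho_n^k U_r/\sqrt n$ satisfies
\[
\E|Z_n|^{1+\eta}\le \|\rho_n^k\|_{L^{2+2\eta}}^{1+\eta}\,\|U_r/\sqrt n\|_{L^{2+2\eta}}^{1+\eta}
\]
for any $\eta$ with $2+2\eta\le 4+\delta$, for instance $\eta=1$. So $\sup_n\E|Z_n|^{1+\eta}<\infty$. Moreover $Z_n\to0$ in probability, since it is a product of an $o_p(1)$ factor and an $O_p(1)$ factor. Lemma~\ref{lem:op_to_o_expectation} then gives $\E|Z_n|\to0$. The term $\rho_n^k\rho_n^\ell$ is handled the same way, and in fact more directly by Cauchy--Schwarz, since $\E|\rho_n^k\rho_n^\ell|\le\|\rho_n^k\|_{L^2}\|\rho_n^\ell\|_{L^2}\to0$. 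Taking expectations, the three remainder terms contribute $o(1)$, which yields the first identity. Here $g^{km}$ and $g^{\ell r}$ are deterministic and finite at $\theta\in K$.

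For the second identity I will write
\[
H_{ij,n}A_n^k=g^{km}\tfrac1{\sqrt n}H_{ij,n}U_m+H_{ij,n}\rho_n^k .
\]
The centered covariant Hessian $H_{ij,n}$ is also a normalized sum of i.i.d.\ centered variables $q_{ij}(X_t;\theta)+g_{ij}$. By the $(4+\delta)$-moment hypothesis on $q_{ij}-\E q_{ij}$, together with $\E q_{ij}=-g_{ij}$ from Proposition~\ref{prop:score_hessian_orders}, Rosenthal's inequality bounds $H_{ij,n}$ uniformly in $L^{4+\delta}$. The same H\"older argument then shows $\sup_n\E|H_{ij,n}\rho_n^k|^{2}<\infty$, and $H_{ij,n}\rho_n^k=O_p(1)o_p(1)=o_p(1)$. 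Lemma~\ref{lem:op_to_o_expectation} applies again, so this term is $o(1)$ in expectation.

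The only point needing care is the uniform $L^{4+\delta}$ bound for the normalized sums. This is where the hypotheses enter: finite $(4+\delta)$ moments of the score and of the centered $q_{ij}$, uniformly on $K$. The rest is H\"older bookkeeping, choosing $\eta=1$ so that every product of two factors has exponents within the $4+\delta$ budget.
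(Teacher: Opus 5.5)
Your proof is correct and is essentially the paper's argument. You decompose each product into the ideal term in $g^{-1}U/\sqrt n$ plus cross terms carrying $\rho_n$, and remove those cross terms in expectation with Lemma~\ref{lem:op_to_o_expectation}. Your Rosenthal-plus-H\"older bookkeeping supplies the moment bounds on the products that the paper's one-line proof leaves implicit. Since $\rho_n\to0$ in $L^2$ while $U/\sqrt n$ and $H_n$ are bounded in $L^2$, plain Cauchy--Schwarz would already be enough.
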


\begin{proof}
Expand $A_n^kA_n^\ell = (g^{km}\frac{U_m}{\sqrt n}+\rho_n^k)(g^{\ell r}\frac{U_r}{\sqrt n}+\rho_n^\ell)$. Each cross term involves $\rho_n$ multiplied by an $O_p(1)$ factor and Lemma~\ref{lem:op_to_o_expectation} eliminates these in expectation. The mixed $HA$-identity follows similarly.
\end{proof}

\begin{lemma}[Algebraic reduction in normal coordinates]
\label{lem:algebraic_reduction}
Fix \(\theta\in\Theta_{\mathrm{reg}}\), and choose Riemannian normal coordinates at \(\theta\). Define
\begin{align*}
P_{ij}(\theta)
&:=
g^{km}\Big(\E_\theta[s_{ik}s_{jm}]-g_{ik}g_{jm}\Big)
+\Gamma^{(e),m}_{ik}\Gamma^{(e),k}_{jm}
+\Gamma^{(e),k}_{ik}\Gamma^{(e),m}_{jm}
\\
&\quad
+\frac14\,\kappa_{ik\ell}\kappa_{jrs}
\Big(g^{k\ell}g^{rs}+g^{kr}g^{\ell s}+g^{ks}g^{\ell r}\Big)
\\
&\quad
+\frac12\,\kappa_{jrs}\Big(\Gamma^{(e),k}_{ik}g^{rs}+\Gamma^{(e),r}_{ik}g^{ks}+\Gamma^{(e),s}_{ik}g^{kr}\Big)
\\
&\quad
+\frac12\,\kappa_{ik\ell}\Big(\Gamma^{(e),m}_{jm}g^{k\ell}+\Gamma^{(e),k}_{jm}g^{m\ell}+\Gamma^{(e),\ell}_{jm}g^{mk}\Big).
\end{align*}
Then $P_{ij}=\frac12 R^\sharp_{ij}+S^\sharp_{ij}+D_{ij}$, where $D_{ij}$ is the Hellinger discrepancy tensor.
\end{lemma}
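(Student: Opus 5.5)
The plan is to turn the lemma into a verifiable identity. I will give an explicit moment formula for the Hellinger discrepancy, namely the combination of fourth score moments and mixed score--Hessian moments that the paper describes in Theorem~\ref{thm:main} and Remark~\ref{rem:discrepancy}. I will then check that $P_{ij}-\tfrac12R^\sharp_{ij}-S^\sharp_{ij}$ equals that formula term by term. Everything happens at the base point in Riemannian normal coordinates, so $g_{ij}=\delta_{ij}$ and $\Gamma^k_{ij}=0$ there.

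\emph{Step 1 (immersion inner products as moments).} With $e_i=\tfrac12 s_i\psi_\theta$ and $e_{ij}=\tfrac12 s_{ij}\psi_\theta+\tfrac14 s_is_j\psi_\theta$, the Gauss formula and $\Gamma^k_{ij}=0$ give $\mathrm{II}_{ij}=e_{ij}$. Hence
\[
\langle \mathrm{II}_{ik},\mathrm{II}_{j\ell}\rangle_{L^2}
=\tfrac14\E_\theta[s_{ik}s_{j\ell}]
+\tfrac18\E_\theta[s_{ik}s_js_\ell]
+\tfrac18\E_\theta[s_{j\ell}s_is_k]
+\tfrac1{16}\E_\theta[s_is_ks_js_\ell].
\]
Tangency $\langle e_{ij},e_k\rangle_{L^2}=0$ reads $\tfrac14\Gamma^{(e)}_{ijk}+\tfrac18T_{ijk}=0$, which gives $\Gamma^{(e)}_{ijk}=-\tfrac12T_{ijk}$. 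I will express $\kappa_{ijk}$ through $T$ by differentiating $\E_\theta[s_{ij}]=-g_{ij}$ and using $\partial_kg_{ij}=0$ at the point; this yields $\kappa_{ijk}=-\Gamma^{(e)}_{ijk}$, which is the paper's $\kappa=\tfrac12T$. After these substitutions, every first-order-connection term in $P_{ij}$ becomes a quadratic contraction of $T$.

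\emph{Step 2 (the geometric part).} Contracting the Gauss equation $R_{ikj\ell}=4(\langle\mathrm{II}_{ij},\mathrm{II}_{k\ell}\rangle-\langle\mathrm{II}_{i\ell},\mathrm{II}_{kj}\rangle)$ with $g^{k\ell}$ gives
\[
R^\sharp_{ij}=4\langle\mathrm{II}_{ij},\mathcal H\rangle-4S^\sharp_{ij},\qquad \mathcal H:=g^{k\ell}\mathrm{II}_{k\ell}.
\]
Therefore $\tfrac12R^\sharp+S^\sharp=2\langle\mathrm{II}_{ij},\mathcal H\rangle-S^\sharp_{ij}$. Both pieces are explicit through Step~1. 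For instance, $\langle \mathrm{II}_{ij},\mathcal H\rangle$ involves $\E[s_{ij}s_{kk}]$, $\E[s_{ij}s_ks_k]$, $\E[s_{kk}s_is_j]$ and $\E[s_is_js_ks_k]$.

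\emph{Step 3 (subtraction and identification).} I will subtract the Step~2 expression from the definition of $P_{ij}$. The term $g^{km}\E[s_{ik}s_{jm}]$ in $P$ is partially absorbed by the $\tfrac14\E[s_{ik}s_{jk}]$ and $\tfrac14\E[s_{ij}s_{kk}]$ pieces. The $T\otimes T$ contractions from the $\Gamma^{(e)}$ and $\kappa$ terms remain. The result is
\[
D_{ij}=\tfrac34\E[s_{ik}s_{jk}]-\tfrac12\E[s_{ij}s_{kk}]-\delta_{ij}+\text{(mixed }\E[s_{ab}s_cs_d]\text{ terms)}+\text{(quartic }\E[s_as_bs_cs_d]\text{ terms)}+\text{(}T\otimes T\text{ terms)}.
\]
This is the claimed form of $D$: it depends on fourth score moments and mixed third-order score--Hessian moments. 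I will record the coefficients explicitly. Two consistency checks on the explicit $D$ then identify it as the paper's Hellinger discrepancy. First, covariance: re-expressing $D_{ij}$ invariantly, with $\nabla$-covariant Hessians $q_{ij}$ in place of $s_{ij}$, must give a $(0,2)$-tensor. Second, the exponential-family cancellation: there $s_{ij}=-g_{ij}$ is deterministic, so $\Gamma^{(e)}=T=\kappa=0$ at the point and $P=0$. Consequently $D=-(\tfrac12R^\sharp+S^\sharp)$, which is exactly the cancellation asserted in Theorem~\ref{thm:main} and Remark~\ref{rem:discrepancy}.

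The main obstacle is bookkeeping of conventions rather than any conceptual point. The factor $4$ in $g=4\langle\cdot,\cdot\rangle$ must be tracked consistently in both the Gauss equation and $S^\sharp$, and the index placement in $R_{ikj\ell}$ versus $g^{k\ell}$ must match so that the Ricci contraction produces $\langle\mathrm{II}_{ij},\mathcal H\rangle$ rather than its transpose. I would first fix these by testing the final identity on a one-dimensional curved family, where $R^\sharp=0$ and $P_{11}$ is given explicitly in Section~\ref{curvature_corrected_CRLB}, and only then carry out the general contraction.
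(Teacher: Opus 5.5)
Your route is the paper's. In normal coordinates you use $\Gamma^{(e)}_{ijk}=-\tfrac12T_{ijk}=-\kappa_{ijk}$, expand $\tfrac12R^\sharp+S^\sharp$ through the inner products of $e_{ij}=\mathrm{II}_{ij}$, and let $D_{ij}$ be whatever remains. The paper \emph{defines} $D:=P-\tfrac12R^\sharp-S^\sharp$, so the decomposition holds tautologically and no ``identification'' of $D$ is needed; consistency checks could only test $D$, not identify it. Your derivation of $\kappa_{ijk}=-\Gamma^{(e)}_{ijk}$, by differentiating $\E_\theta[s_{ij}]=-g_{ij}$ and using $\partial_kg_{ij}=0$, is in fact the correct justification. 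Tangency alone gives only $\Gamma^{(e)}=-\tfrac12T$. Your identity $\tfrac12R^\sharp+S^\sharp=2\langle\mathrm{II}_{ij},\mathcal H\rangle-S^\sharp_{ij}$ is also correct.

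Two of the concrete things you add are wrong, however.

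\textbf{The explicit coefficient.} $S^\sharp_{ij}$ contains $+\tfrac14\E[s_{ik}s_{jk}]$, and $S^\sharp$ enters $\tfrac12R^\sharp+S^\sharp$ with a minus sign. So this term is not absorbed: $D_{ij}$ carries $\tfrac54\E[s_{ik}s_{jk}]$, not $\tfrac34$. Your $-\tfrac12\E[s_{ij}s_{kk}]$ is right.

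\textbf{The exponential-family check.} It rests on a false premise. The Hessian $s_{ij}$ is deterministic only in natural (e-affine) coordinates. In Riemannian normal coordinates your own Step~1 gives $\Gamma^{(e)}_{ijk}=-\tfrac12T_{ijk}$. Here $T$ is a tensor (the third cumulant of the sufficient statistic) and is generally nonzero, e.g.\ for Poisson or $N(\mu,\sigma^2)$. Changing coordinates gives $s_{ij}=-\delta_{ij}-\tfrac12T_{ijk}s_k$ at the base point. So $s_{ij}$ is random, and $\Gamma^{(e)}$, $T$, $\kappa$ do not vanish. Inserting this into the reduced form $P_{ij}=\sum_k\E[s_{ik}s_{jk}]-\delta_{ij}-\tfrac12\kappa_{ik\ell}\kappa_{jk\ell}+\tfrac14\kappa_{ikk}\kappa_{j\ell\ell}$ yields $P_{ij}=\tfrac18T_{ik\ell}T_{jk\ell}+\tfrac1{16}T_{ikk}T_{j\ell\ell}$. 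For Poisson with $d=1$ this equals $3/(16\lambda)\neq0$. Done honestly, your check refutes $P\equiv0$ rather than confirming it. Moreover $R^\sharp\neq0$ for, e.g., the normal family, whose Fisher--Rao metric is hyperbolic. None of this touches the lemma, which asserts nothing about exponential families, but you should not present the check as a confirmation.

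Finally, your proposed one-dimensional calibration cannot settle the sign or index conventions of the Gauss equation, because $R^\sharp\equiv0$ in $d=1$ under every convention. Those conventions affect only the explicit formula for $D$, not the truth of the lemma.
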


\begin{proof}
In normal coordinates, $g_{ij}=\delta_{ij}$, $\Gamma^k_{ij}=0$. From the Gauss decomposition, $e_{ij}=\mathrm{II}_{ij}$ is purely normal. The tangency condition $\langle \mathrm{II}_{ij},e_k\rangle=0$ forces $\Gamma^{(e)}_{ijk}=-\frac12 T_{ijk}$ and $\kappa_{ijk}=\frac12 T_{ijk}$.

Substituting $\Gamma^{(e)}=-\kappa$ into $P_{ij}$ and simplifying (using full symmetry of $\kappa_{ijk}$) gives the reduced form
\[
P_{ij} = \sum_k\E[s_{ik}s_{jk}]-\delta_{ij} - \frac12\kappa_{ikl}\kappa_j{}^{kl} + \frac14\kappa_{ir}{}^r\kappa_{js}{}^s.
\]

For the geometric side, the Gauss equation with convention $g_{ij}=4\langle e_i,e_j\rangle$ gives
$R_{ikjl}=4(\langle \mathrm{II}_{ij},\mathrm{II}_{kl}\rangle-\langle \mathrm{II}_{il},\mathrm{II}_{kj}\rangle)$.
Computing $\frac12 R^\sharp_{ij}+S^\sharp_{ij}$ from the inner products $4\langle e_{ab},e_{cd}\rangle = \E[s_{ab}s_{cd}]+\frac12\E[s_{ab}s_cs_d]+\frac12\E[s_{cd}s_as_b]+\frac14\E[s_as_bs_cs_d]$, one obtains an expression involving second and fourth score moments.

The Hellinger discrepancy $D_{ij}:=P_{ij}-(\frac12 R^\sharp_{ij}+S^\sharp_{ij})$ collects the remaining terms.
\end{proof}

\begin{proof}[Proof of Theorem~\ref{thm:main}]
\label{proof:main_detailed}

Fix $K\subset\Theta_{\mathrm{reg}}$ compact and $\theta\in K$. All bounds below are uniform on $K$ unless stated otherwise.

\textit{Step 1: Score equation expansion.}
The score-root condition $U_i(\hat\theta_n)=0$ and the covariant Taylor expansion (Assumption~7 of Section~\ref{app:standing_assumptions}) give
\[
0 = U_i(\theta) + (\nabla^2\ell)_{ij}\Delta_n^j + \frac12(\nabla^3\ell)_{ijk}\Delta_n^j\Delta_n^k + r_{i,n},
\]
with $r_{i,n}=o_p(n^{-1/2})$. Substituting $\Delta_n = \frac1{\sqrt n}A_n+\frac1n B_n+o_p(n^{-1})$ and $(\nabla^2\ell)_{ij}=-ng_{ij}+\sqrt n H_{ij,n}$, then dividing by $\sqrt n$, yields
\[
0 = \frac{U_i}{\sqrt n} - g_{ij}A_n^j + \frac1{\sqrt n}(H_{ij,n}A_n^j - g_{ij}B_n^j + \frac12 K_{ijk,n}A_n^jA_n^k) + o_p(n^{-1/2}).
\]

\textit{Step 2: Identification of $A_n$ and $B_n$.}
At leading order: $A_n^a = g^{ai}\frac{U_i}{\sqrt n}+o_p(1)$. The $L^{4+\delta}$-strengthening (Assumption~8) and the Lemma on $B_n$-identification give $B_{i,n}=H_{ij,n}A_n^j+\frac12 K_{ijk,n}A_n^jA_n^k+o_p(1)$.

\textit{Step 3: MSE expansion.}
From the stochastic expansion and remainder bounds (Assumption~9):
\[
\text{MSE}_\theta(\hat\theta_n) = \frac1n\E[A_nA_n^\top]+\frac1{n^2}\E[B_nB_n^\top]+o(n^{-2}).
\]
Using $\E[A_n^aA_n^b]=g^{ab}+o(1)$ (from Corollary~\ref{cor:remainder_second_moments}), the negligible bias (Assumption~6), and the negligible cross term (Assumption~10):
\[
\Cov_\theta(\hat\theta_n) = \frac1n I(\theta)^{-1} + \frac1{n^2}C_n(\theta)+o(n^{-2}),
\qquad
C_n(\theta)=\E[B_nB_n^\top].
\]

\textit{Step 4: Computation of $\E[B_{i,n}B_{j,n}]$.}
Substituting the $B_n$ identification and applying the replacement hypotheses (Assumption~11) gives
\[
\E[B_{i,n}B_{j,n}] = \E[\widetilde B_{i,n}\widetilde B_{j,n}]+o(1),
\qquad
\widetilde B_{i,n}=H_{ij,n}A_n^j+\frac12\kappa_{ijk}A_n^jA_n^k.
\]
Expanding and applying the Wick contractions of Lemma~\ref{lem:joint_clt_wick} gives
\begin{align*}
\E[B_{i,n}B_{j,n}]
&= \E[H_{ik,n}H_{jm,n}]\E[A_n^kA_n^m]
 + \E[H_{ik,n}A_n^m]\E[A_n^kH_{jm,n}]
 + \E[H_{ik,n}A_n^k]\E[H_{jm,n}A_n^m] \\
&\quad
+ \frac14 \kappa_{ik\ell}\kappa_{jrs}
\big(\text{Wick contraction of }A_n^kA_n^\ell A_n^rA_n^s\big) \\
&\quad
+ \frac12 \kappa_{jrs}
\big(\text{Wick contraction of }H_{ik,n}A_n^kA_n^rA_n^s\big) \\
&\quad
+ \frac12 \kappa_{ik\ell}
\big(\text{Wick contraction of }H_{jm,n}A_n^mA_n^kA_n^\ell\big)
+ o(1).
\end{align*}

\textit{Step 5: Evaluation of second moments.}
In normal coordinates at $\theta$, the basic second moments are:
\begin{align*}
\E[A_n^kA_n^\ell]&=g^{kl}+o(1),\\
\E[H_{ik,n}A_n^\ell]&=\Gamma^{(e),\ell}_{ik}+o(1),\\
\E[H_{ik,n}H_{jl,n}]&=\E[s_{ik}s_{jl}]-g_{ik}g_{jl}.
\end{align*}
These follow from Corollary~\ref{cor:remainder_second_moments} and direct computation of the i.i.d.\ sums.

\textit{Step 6: Assembly and geometric reduction.}
Inserting the evaluated moments into Step~4 yields precisely the expression $P_{ij}(\theta)$ defined in Lemma~\ref{lem:algebraic_reduction}. By that lemma,
\[
P_{ij}=\frac12 R^\sharp_{ij}+S^\sharp_{ij}+D_{ij}.
\]
In the inverse-metric frame, $C(\theta)=I(\theta)^{-1}P(\theta)I(\theta)^{-1}$.

\textit{Step 7: $S^\sharp\succeq 0$.}
For any $v$, $v^iv^jS^\sharp_{ij}=\sum_k\|v^i\mathrm{II}_{ik}\|^2\ge 0$ in normal coordinates.
\end{proof}

\bibliographystyle{amsplain}   % good with amsart / math style
\bibliography{references}      % no .bib extension

@book{Amari2016,
  author    = {Amari, Shun-ichi},
  title     = {Information Geometry and Its Applications},
  series    = {Applied Mathematical Sciences},
  volume    = {194},
  publisher = {Springer},
  address   = {Tokyo},
  year      = {2016},
  doi       = {10.1007/978-4-431-55978-8}
}

@book{AmariNagaoka2000,
  author    = {Amari, Shun-ichi and Nagaoka, Hiroshi},
  title     = {Methods of Information Geometry},
  series    = {Translations of Mathematical Monographs},
  volume    = {191},
  publisher = {American Mathematical Society and Oxford University Press},
  address   = {Providence, RI and Oxford},
  year      = {2000},
  note      = {Translated by Daishi Harada},
  doi       = {10.1090/mmono/191}
}

@incollection{AyJost2017a,
  author    = {Ay, Nihat and Jost, J{\"u}rgen and L{\^e}, H{\^o}ng V{\^a}n and Schwachh{\"o}fer, Lorenz},
  title     = {Parametrized measure models},
  booktitle = {Information Geometry},
  publisher = {Springer},
  address   = {Cham},
  year      = {2017},
  pages     = {121--184},
  doi       = {10.1007/978-3-319-56478-4_3}
}

@incollection{AyJost2017b,
  author    = {Ay, Nihat and Jost, J{\"u}rgen and L{\^e}, H{\^o}ng V{\^a}n and Schwachh{\"o}fer, Lorenz},
  title     = {The intrinsic geometry of statistical models},
  booktitle = {Information Geometry},
  publisher = {Springer},
  address   = {Cham},
  year      = {2017},
  pages     = {185--239},
  doi       = {10.1007/978-3-319-56478-4_4}
}

@book{BarndorffNielsenCox1994,
  author    = {Barndorff-Nielsen, Ole E. and Cox, David R.},
  title     = {Inference and Asymptotics},
  publisher = {Chapman and Hall},
  address   = {London},
  year      = {1994}
}

@article{BatesWatts1980,
  author  = {Bates, Douglas M. and Watts, Donald G.},
  title   = {Relative curvature measures of nonlinearity},
  journal = {Journal of the Royal Statistical Society. Series B (Methodological)},
  volume  = {42},
  number  = {1},
  pages   = {1--16},
  year    = {1980},
  doi     = {10.1111/j.2517-6161.1980.tb01094.x}
}

@article{Bhattacharyya1946,
  author  = {Bhattacharyya, A.},
  title   = {On some analogues of the amount of information and their use in statistical estimation},
  journal = {Sankhy\={a}},
  volume  = {8},
  pages   = {1--14},
  year    = {1946}
}

@book{BickelKlaassen1993,
  author    = {Bickel, Peter J. and Klaassen, Chris A. J. and Ritov, Ya'acov and Wellner, Jon A.},
  title     = {Efficient and Adaptive Estimation for Semiparametric Models},
  publisher = {Johns Hopkins University Press},
  address   = {Baltimore, MD},
  year      = {1993}
}

@article{Campbell1986,
  author  = {Campbell, L. L.},
  title   = {An extended {\v{C}}encov characterization of the information metric},
  journal = {Proceedings of the American Mathematical Society},
  volume  = {98},
  number  = {1},
  pages   = {135--141},
  year    = {1986},
  doi     = {10.1090/S0002-9939-1986-0848890-5}
}

@book{Chentsov1982,
  author    = {Cencov, N. N.},
  title     = {Statistical Decision Rules and Optimal Inference},
  series    = {Translations of Mathematical Monographs},
  volume    = {53},
  publisher = {American Mathematical Society},
  address   = {Providence, RI},
  year      = {1982},
  note      = {Translated by the Israel Program for Scientific Translations},
  doi       = {10.1090/mmono/053}
}

@book{Cramer1999,
  author    = {Cram{\'e}r, Harald},
  title     = {Mathematical Methods of Statistics},
  series    = {Princeton Landmarks in Mathematics and Physics},
  publisher = {Princeton University Press},
  address   = {Princeton, NJ},
  year      = {1999},
  note      = {Originally published in 1946}
}

@article{Efron1975,
  author  = {Efron, Bradley},
  title   = {Defining the curvature of a statistical problem (with applications to second order efficiency)},
  journal = {The Annals of Statistics},
  volume  = {3},
  number  = {6},
  pages   = {1189--1242},
  year    = {1975},
  doi     = {10.1214/AOS/1176343282}
}

@article{Eguchi1983,
  author  = {Eguchi, Shinto},
  title   = {Second order efficiency of minimum contrast estimators in a curved exponential family},
  journal = {The Annals of Statistics},
  volume  = {11},
  number  = {3},
  pages   = {793--803},
  year    = {1983},
  doi     = {10.1214/aos/1176346246}
}

@book{KassVos2011,
  author    = {Kass, Robert E. and Vos, Paul W.},
  title     = {Geometrical Foundations of Asymptotic Inference},
  publisher = {John Wiley \& Sons},
  address   = {New York},
  year      = {1997}
}

@book{Kay1993,
  author    = {Kay, Steven M.},
  title     = {Fundamentals of Statistical Signal Processing. Vol. {I}: Estimation Theory},
  publisher = {Prentice Hall},
  address   = {Englewood Cliffs, NJ},
  year      = {1993}
}

@incollection{Lauritzen1987,
  author    = {Lauritzen, Steffen L.},
  title     = {Statistical manifolds},
  booktitle = {Differential Geometry in Statistical Inference},
  series    = {IMS Lecture Notes--Monograph Series},
  volume    = {10},
  publisher = {Institute of Mathematical Statistics},
  address   = {Hayward, CA},
  year      = {1987},
  pages     = {163--216}
}

@book{LehmannCasella1998,
  author    = {Lehmann, Erich L. and Casella, George},
  title     = {Theory of Point Estimation},
  edition   = {2nd},
  publisher = {Springer},
  address   = {New York},
  year      = {1998},
  doi       = {10.1007/b98854}
}

@article{Marriott2002,
  author  = {Marriott, Paul},
  title   = {On the local geometry of mixture models},
  journal = {Biometrika},
  volume  = {89},
  number  = {1},
  pages   = {77--93},
  year    = {2002},
  doi     = {10.1093/biomet/89.1.77}
}

@article{Rao1945,
  author  = {Rao, C. Radhakrishna},
  title   = {Information and the accuracy attainable in the estimation of statistical parameters},
  journal = {Bulletin of the Calcutta Mathematical Society},
  volume  = {37},
  number  = {3},
  pages   = {81--91},
  year    = {1945}
}

@book{VanDerVaart2000,
  author    = {van der Vaart, Aad W.},
  title     = {Asymptotic Statistics},
  series    = {Cambridge Series in Statistical and Probabilistic Mathematics},
  volume    = {3},
  publisher = {Cambridge University Press},
  address   = {Cambridge},
  year      = {1998},
  doi       = {10.1017/CBO9780511802256}
}

@book{Watanabe2009,
  author    = {Watanabe, Sumio},
  title     = {Algebraic Geometry and Statistical Learning Theory},
  series    = {Cambridge Monographs on Applied and Computational Mathematics},
  volume    = {25},
  publisher = {Cambridge University Press},
  address   = {Cambridge},
  year      = {2009},
  doi       = {10.1017/CBO9780511800474}
}

\end{document}